\newtheorem{theorem}{Theorem}[section]
\newtheorem{conjecture}[theorem]{Conjecture}
\newtheorem{lemma}[theorem]{Lemma}
\newtheorem{proposition}[theorem]{Proposition}
\newtheorem{corollary}[theorem]{Corollary}
\theoremstyle{definition}
\newtheorem{definition}[theorem]{Definition}
\newtheorem{remark}[theorem]{Remark}
\newenvironment{customprop}[1]
  {\innercustomprop}%
  {\endinnercustomprop}
\numberwithin{equation}{section}
\renewcommand{\Pr}{\mathbb{P}}	
\newcommand{\Ex}{\mathbb{E}}	
\renewcommand{\d}{d}	
\newcommand{\ind}{\mathbf{1}}	
\newcommand{\norm}[1]{\Vert#1\Vert}
\newcommand{\cdrp}{\mathsf{CDRP}}
\newcommand{\m}{\mathsf}
\newcommand{\Con}{\mathrm{C}} 
\newcommand{\N}{\mathbb{N}}
\newcommand{\R}{\mathbb{R}} 
\newcommand{\Z}{\mathbb{Z}} 
\newcommand{\e}{\varepsilon}
\newcommand{\g}{\mathfrak{g}}
\newcommand{\h}{\mathfrak{h}}
\newcommand{\calH}{\mathcal{H}}
\newcommand{\calL}{\mathcal{L}}
\newcommand{\calZ}{\mathcal{Z}}
\renewcommand{\bar}{\overline}
\title[Tightness of the CDRP]{Short- and long-time path tightness of the continuum directed random polymer}
\author[S.\ Das]{Sayan Das}
\address{S.\ Das,
	Department of Mathematics, Columbia University,
	\newline\hphantom{\quad \ \ S. Das}
	2990 Broadway, New York, NY 10027 USA
}
\email{sayan.das@columbia.edu}
\author[W.\ Zhu]{Weitao Zhu}
\address{W.\ Zhu,
	Department of Mathematics, Columbia University,
	\newline\hphantom{\quad \ \ S. Das}
	2990 Broadway, New York, NY 10027 USA
}
\email{weitao.zhu@columbia.edu}
\subjclass[2020]{%
	Primary 60K37, 82B21,	
	Secondary 82D60.  	
}
\keywords{%
	Directed Polymer, Kardar--Parisi--Zhang equation, stochastic heat equation, Brownian bridge.
}
\begin{document}

\begin{abstract} We consider the point-to-point continuum directed random polymer ($\cdrp$) model that arises as a scaling limit from $1+1$ dimensional directed polymers in the intermediate disorder regime. We show that the annealed law of a point-to-point $\cdrp$ of length $t$ converges to the Brownian bridge under diffusive scaling when $t \downarrow 0$. In case that $t$ is large, we show that the transversal fluctuations of point-to-point $\cdrp$ are governed by the $2/3$ exponent. More precisely, as $t$ tends to infinity, we prove tightness of the annealed path measures of point-to-point $\cdrp$ of length $t$ upon scaling the length by $t$ and fluctuations of paths by $t^{2/3}$. The $2/3$ exponent is tight such that the one-point distribution of the rescaled paths converges to the geodesics of the directed landscape. This point-wise convergence can be enhanced to process-level modulo a conjecture. Our short and long-time tightness results also extend to point-to-line $\cdrp$. In the course of proving our main results, we establish quantitative versions of quenched modulus of continuity estimates for long-time $\cdrp$ which are of independent interest.
\end{abstract}

\maketitle

\section{Introduction}

\subsection{Background and motivation}\label{sec:bac}

Directed polymers in random environment can be considered as random walks interacting with a random external environment. First introduced and studied in \cite{huse}, \cite{imb} and \cite{bol}, they have since become a fertile ground for research in orthogonal polynomials, random matrices, stochastic PDEs, and integrable systems (see \cite{comets,gia,batesch} and the references therein). In the $(1+1)$-dimensional discrete polymer case, the random environment is specified by a collection of zero-mean i.i.d.~random variables $\{\omega=\omega(i,j) \mid (i,j)\in \Z_{+}\times \Z\}$. Given the environment, the energy of the $n$-step nearest neighbour random walk $(S_i)_{i=0}^n$ starting at the origin is given by 
$H_n^{\omega}(S):=\sum_{i=1}^n \omega(i,S_i).$
The \textbf{point-to-line} polymer measure on the set of all such paths is then defined as 
\begin{align*}
	\Pr_{n,\beta}^{\omega}(S)=\frac1{Z_{n,\beta}^{\omega}} e^{\beta H_n^{\omega}(S)} \Pr(S),
\end{align*}
where $\Pr(S)$ is the simple random walk measure, $\beta$ is the inverse temperature, and $Z_{n,\beta}^{\omega}$ is the partition function. 

A competition exists between the \textit{entropy} of paths and the \textit{energy} of the environment in this polymer measure. Spurred by this competition, two distinct regimes appear depending on the inverse temperature $\beta$. When $\beta = 0$ the polymer measure is the simple random walk; hence it is entropy-dominated and exhibits diffusive behavior. We refer to this scenario as \textit{weak disorder}. For $\beta>0$, the polymer measure concentrates on paths with high energies and the diffusive behavior ceases to be guaranteed. This type of energy domination is known as \textit{strong disorder}. For the definitions and results on the precise separation between the two regimes as well as results on higher dimensions, we refer the readers to \cite{comy,lacoin,cv}.

While the polymer behavior is characterized by diffusivity in weak disorder, the fluctuations of polymers in strong disorder are conjecturally characterized by two scaling exponents $\zeta$ and $\chi$ (\cite{timo}, \cite{akq2}):
\begin{align}\label{e:epoint}
	& \text{Fluctuation of the endpoint of the path: }|S_n|\sim n^{\zeta}, \\ & \text{Fluctuation of the log partition function: } [\log Z_{n,\beta}^{\omega} -\rho(\beta)n] \sim n^{\chi}. \notag
\end{align}
It is believed that directed polymers fall under the ``Kardar-Parisi-Zhang (KPZ) universality class" (see \cite{huse,hhf,kpz,ks2,corwin2012kardar}) with fluctuation exponent $\chi=\frac13$ and transversal exponent $\zeta=\frac23$.  This instance of the transversal exponent appearing larger than the diffusive scaling exponent $\frac12$ is called \textit{superdiffusivity}. 
Crucially, the conjectured values for $\chi$ and $\zeta$ satisfy the ``KPZ relation": 
\begin{align}\label{eq:kpzrel}
	\chi = 2\zeta -1.
\end{align}

At the moment, rigorous results on either exponent or the KPZ relation have been scarce. For directed polymers, $\zeta = 2/3$ has only been obtained for log-gamma polymers in \cite{timo,bcd21} and {for certain semi-discrete polymers called O'Connell-Yor polymer \cite{lan}}. Upper and lower bounds on $\zeta$ have been established in \cite{mp,om} under additional weight assumptions. For zero-temperature models, $\zeta = \frac{2}{3}$ has been established in \cite{joh,bcs,alan,dov,bghh}.
Outside the temperature models, the KPZ relation in \eqref{eq:kpzrel} has also been shown in other random growth models such as first passage percolation in \cite{chat} and \cite{ad} under the assumption that the exponents exist in a certain sense. In strong disorder, the polymer also exhibits certain localization phenomena (see \cite{comy,batesch,dz22} for partial surveys). In particular, the favorite region conjecture speculates that the endpoint of the polymer is asymptotically localized in a region of stochastically bounded diameter (see \cite{comets2013overlaps,batesch,bates,bak,dz22} for related results). 

Given the conceptual pictures on the two extreme regimes, in the present paper, we consider polymer fluctuations in the \textit{intermediate disorder regime}.  Introduced in \cite{akq2}, the intermediate disorder regime corresponds to scaling the inverse temperature $\beta = \beta_n = n^{-1/4}$ with the length of the polymer $n$, which captures the transitions between the weak and strong disorders and retains features of both. Within this regime, \cite{akq} showed that the partition function for point-to-point directed polymers has a universal scaling limit given by the solution of the Stochastic Heat Equation (SHE) for environment with finite exponential moments. In addition, the polymer path itself converges to a universal object called the Continuum Directed Random Polymer (denoted as $\cdrp$ hereafter) under the diffusive scaling.

We consider point-to-point $\cdrp$ of length $t$. The main contribution of this paper can be summarized as follows.
\begin{enumerate}[label=(\alph*), leftmargin=15pt]
	\item We show that as $t\downarrow 0$, the polymer paths behave diffusively and its annealed law converges in to the law of a Brownian bridge (Theorem \ref{thm:ann_short}).
	\item On the other hand, as $t\uparrow \infty$, the polymers have $t^{2/3}$ pathwise fluctuations. The latter result confirms superdiffusivity and the conjectural $2/3$ exponent for the $\cdrp$ (Theorem \ref{ltight} \ref{tght}). Moreover, the strength of our result exceeds the conjecture in \eqref{e:epoint}, which only claims endpoint tightness. Instead, in Theorem \ref{ltight} \ref{tght}, we prove that the annealed law of paths of point-to-point $\cdrp$ of length $t$ are tight (as $t\uparrow \infty$) upon $t^{2/3}$ scaling. This marks the first result of path tightness among all positive-temperature models.
	\item We also show pointwise weak convergence of the polymer paths under the $t^{2/3}$ scaling to points on the geodesic of the directed landscape (Theorem \ref{ltight} \ref{pt}). This ensures the $2/3$ scaling exponent is indeed tight. Modulo a conjecture on convergence of the KPZ sheet to the Airy Sheet (Conjecture \ref{conj:sheet}), we obtain pathwise convergence of the rescaled $\cdrp$ to the geodesic of the directed landscape (Theorem \ref{thm:ann_long_pr}). 
\end{enumerate}
These results provide a comprehensive picture of fluctuations of $\cdrp$ paths under short- and long-time scaling. Our short-time and long-time tightness results also extend to point-to-line $\cdrp$ (Theorem \ref{ltight.ptl}). The formal statement of the main results are given in Section \ref{sec:cdrp}.

\subsection{The model and the main results}\label{sec:cdrp}
We use the stochastic heat equation (SHE) with multiplicative noise to define the $\cdrp$ model. To start with, consider a four-parameter random field $\calZ(x,s;y,t)$ defined on 
\begin{align*}
	\R_{\uparrow}^4:= \{(x,s;y,t)\in \R^4 : s<t\}.
\end{align*}
For each $(x,s)\in \R\times \R$, the function $(y,t)\mapsto \calZ(x,s;y,t)$ is the solution of the SHE starting from location $x$ at time $s$, i.e., the unique solution of
\begin{align*}
	\partial_t\calZ=\tfrac12\partial_{xx}\calZ+\calZ\cdot \xi, \qquad (y,t) \in \R \times (s,\infty),
\end{align*}
with Dirac delta initial data $\lim_{t\downarrow s}\calZ(x,s;y,t)=\delta(x-y).$ Here $\xi=\xi(x,t)$ is the space-time white noise. The SHE itself enjoys a well-developed solution theory based on It\^o integral and chaos expansion \cite{bertini1995stochastic,walsh1986introduction} also \cite{corwin2012kardar,quastel2011introduction}. 
Via the Feynmann-Kac formula (\cite{hhf, comets}) the four-parameter random field can be written in terms of chaos expansion as
\begin{align}\label{eq:chaos}
	\calZ(x,s;y,t) = {p(y-x,t-s)}+\sum_{k=1}^{\infty}  \int_{\Delta_{k,s,t}} \int_{\R^k} \prod_{\ell=1}^{k+1} p(y_{\ell}-y_{\ell-1},s_{\ell}-s_{\ell-1}) \xi(y_{\ell},s_{\ell})  d\vec{y} \,d \vec{s},
\end{align}
with $\Delta_{k,s,t}:=\{(s_\ell)_{\ell=1}^k : s<s_1<\cdots<s_k<t\}$, $s_0=s, y_0=x, s_{k+1}=t$, and $y_{k+1}=y$. Here $$p(x,t):=(2\pi t)^{-1/2}\exp(-x^2/(2t))$$ denotes the standard heat kernel. The field $\calZ$ satisfies several other properties including the Chapman-Kolmogorov equations \cite[Theorem 3.1]{akq}. For all $0\le s<r<t$, and $x,y\in \R$ we have
\begin{align}\label{eq:chapman}
	\calZ(x,s;y,t)=\int_{\R} \calZ(x,s;z,r)\calZ(z,r;y,t) dz.
\end{align}
\begin{definition}[Point-to-point $\cdrp$]\label{def:cdrp} 
	Conditioned on the white noise $\xi$, let $\Pr^{\xi}$ be a measure on $C([s,t])$ whose finite-dimensional distribution is given by
	\begin{align}\label{eq:cdrp}
		\Pr^{\xi}(X({t_1})\in dx_1, \ldots, X({t_k})\in dx_k)=\frac1{\calZ(x,s;y,t)}\prod_{j=0}^k \calZ(x_j,t_j,;x_{j+1},t_{j+1})dx_1\cdots dx_k.
	\end{align}
	for $s=t_0\le t_1<\cdots<t_k\le t_{k+1}=t$, with $x_0=x$ and $x_{k+1}=y$. \eqref{eq:chapman} ensure $\Pr^{\xi}$ is a valid probability measure. Note that $\Pr^{\xi}$ also depends on $x$ and $y$ but we suppress it from our notations. We will use the notation $\cdrp(x,s;y,t)$ and write $X \sim \cdrp(x,s;y,t)$ when  $X(\cdot)$ is a random continuous function on $[s,t]$ with $X(s)=x$ and $X(t)=y$ and its finite-dimensional distributions given by \eqref{eq:cdrp} conditioned on $\xi$. We will also use the notation $\Pr^\xi, \Ex^\xi$ to denote the law and expectation conditioned on the noise $\xi$, and $\Pr, \Ex$ for the annealed law and expectation respectively.
\end{definition}
\begin{definition}[Point-to-line $\cdrp$]\label{def:cdrp2} 
	Conditioned on the white noise $\xi$, we let $\Pr_{*}^{\xi}$ be a measure $C([s,t])$ whose finite-dimensional distributions are given by
	\begin{align}\label{eq:cdrp2}
		\Pr_*^{\xi}(X({t_1})\in dx_1, \ldots, X({t_k})\in dx_k)=\frac1{\calZ(x,s;*,t)}\prod_{j=0}^k \calZ(x_j,t_j,;x_{j+1},t_{j+1})dx_1\cdots dx_k.
	\end{align}
	for $s=t_0\le t_1<\cdots<t_k\le t_{k+1}=t$, with $x_0=x$ and $x_{k+1}=*$. Here $\calZ(x,s;*,t):=\int_{\R} \calZ(x,s;y,t)dy$.
	Note that the Chapman-Kolmogorov equations \eqref{eq:chapman} ensure $\Pr_*^{\xi}$ is a probability measure. {The measure $\Pr_*^{\xi}$ also depends on $x$ but we again suppress it from our notations. We similarly use $\cdrp(x,y;*,t)$ to refer to random variables with $\Pr_*^{\xi}$ law.}
\end{definition}

\begin{remark}
	In both \cite{akq} and \cite{comets}, the authors considered a five-parameter random field  $\calZ_{\beta}(x,s;y,t)$ with inverse temperature $\beta$, which is the simultaneous solution of the stochastic heat equation
	\begin{align*}
		\partial_t\calZ_{\beta}=\tfrac12\partial_{xx}\calZ_{\beta}+\beta \calZ_\beta \xi, \qquad \lim_{t\downarrow s}\calZ_\beta(x,s;y,t)=\delta_x(y).
	\end{align*}
	and defined corresponding $\cdrp$ measures. Observe that
	when $\beta=0$, the stochastic heat equation becomes the heat equation and the corresponding $\cdrp$ measures reduce to Brownian measures. Furthermore, for any $\beta>0$, by the scaling property of the random field $\calZ_{\beta}$, i.e. (iii) of Theorem 3.1 in \cite{akq}, we have $$\calZ_{\beta}(x,s;y,t)\stackrel{d}{=} \beta^{-2}\calZ_1(\beta^2x,\beta^4s;\beta^2y,\beta^4t),$$
	Thus in this paper, we focus on exclusively on $\beta = 1$.
\end{remark}

We now state our first main result which discusses the annealed convergence of the $\cdrp$ in the short-time regime to Brownian bridge law.

\begin{theorem}[Annealed short-time convergence]\label{thm:ann_short} Fix $\e>0$. Let $X\sim \cdrp(0,0;0, \e)$. Consider the random function $Y^{(\e)}:[0,1]\to \R$ defined by $Y_{t}^{(\e)}:=\frac{1}{\sqrt{\e}}X({\e t})$. Let $\Pr^{\e}$ denote the annealed law of $Y^{(\e)}$ on the space of continuous functions on $C([0,1])$. As $\e \downarrow 0$, $\Pr^{\e}$ converges weakly to $\Pr_B$, where $\Pr_{B}$ is the measure on $C([0,1])$ generated by a Brownian bridge on $[0,1]$ starting and ending at $0$.
\end{theorem}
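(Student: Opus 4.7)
My plan is to use the scaling invariance of the SHE to reduce the short-time problem to a vanishing-temperature problem on the unit interval, then to establish weak convergence in $C([0,1])$ by combining finite-dimensional distribution convergence with Kolmogorov--Chentsov tightness.

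Invoking the scaling property of $\calZ_\beta$ recalled in the remark after Definition~\ref{def:cdrp2}, direct substitution into the finite-dimensional distribution formula \eqref{eq:cdrp} shows that the annealed law of $Y^{(\e)}$ on $C([0,1])$ coincides with that of $\cdrp_{\beta}(0,0;0,1)$ at noise level $\beta=\e^{1/4}$ (where $\cdrp_\beta$ is defined analogously with $\calZ_\beta$ replacing $\calZ$). It therefore suffices to show that the annealed law of $\cdrp_\beta(0,0;0,1)$ converges weakly to $\Pr_B$ as $\beta\downarrow 0$. For $0 = t_0 < t_1 < \cdots < t_k < t_{k+1} = 1$ and $y_0 = y_{k+1} = 0$, the annealed joint density of the marginals reads
\begin{equation*}
  \rho_\beta(y_1,\ldots,y_k) = \Ex\left[\frac{\prod_{j=0}^{k}\calZ_\beta(y_j,t_j;y_{j+1},t_{j+1})}{\calZ_\beta(0,0;0,1)}\right].
\end{equation*}
Only the zeroth term of the chaos expansion \eqref{eq:chaos} survives at $\beta=0$, so $\calZ_\beta\to p$ almost surely and the integrand above converges to the Brownian bridge density $\prod_j p(y_{j+1}-y_j, t_{j+1}-t_j)/p(0,1)$. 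Combined with standard uniform-in-$\beta$ positive moment bounds on $\calZ_\beta$ (via the chaos expansion) and uniform-in-$\beta$ negative moment bounds on $\calZ_\beta(0,0;0,1)$ (small-ball SHE estimates in the literature), this provides uniform integrability and hence pointwise convergence of $\rho_\beta$, yielding finite-dimensional convergence.

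To upgrade to weak convergence in $C([0,1])$, I would verify the Kolmogorov--Chentsov criterion by establishing an annealed moment bound $\Ex[|Y_t-Y_s|^{2m}]\le C_m|t-s|^m$ uniformly in $\beta\in(0,1]$ for some sufficiently large $m$. This amounts to integrating $|y_2-y_1|^{2m}$ against the annealed two-point density above and separating positive moments of the numerator kernels from negative moments of $\calZ_\beta(0,0;0,1)$ via H\"older's inequality, with the diffusive scaling $|t-s|^m$ arising from the Gaussian tails of $p(\cdot, t-s)$. The principal technical hurdle is this uniform-in-$\beta$ negative moment control near $\beta=0$: the denominator in $\rho_\beta$ can be anomalously small on events of non-negligible probability, even though at $\beta=0$ it reduces to the deterministic constant $p(0,1)$. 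Existing small-ball estimates for the SHE should supply the required stability, but ensuring uniformity as $\beta\downarrow 0$ is the main quantitative point.
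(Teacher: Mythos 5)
Your proposal is correct in outline, but it takes a genuinely different route from the paper in both halves of the argument. For the finite-dimensional distributions, you work directly with the annealed density $\rho_\beta=\Ex[N/D]$ and justify passing to the limit by uniform integrability, which forces you to control positive moments of the numerator and, crucially, negative moments of $\calZ_\beta(0,0;0,1)$ uniformly as $\beta\downarrow 0$. The paper avoids this entirely: it shows the \emph{quenched} density converges in probability to the bridge density (via the Wick-exponential representation $\calZ/p \stackrel{d}{=}\Ex_{0,0}^{t,x}[:\exp:\{\e^{1/4}\int\xi\}]$ and a second-moment chaos computation), and then upgrades to the annealed statement by the positive-part trick $|g-f_{\vec t;\e}|^{+}\le g$ together with dominated convergence and Scheff\'e's theorem, so no moment bounds on the random ratio are ever needed. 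For tightness, you propose Kolmogorov--Chentsov with annealed increment moments, again separating numerator and denominator by H\"older and needing uniform-in-$\beta$ negative moments; the paper instead uses the Brownian relation of Lemma~\ref{l3} (size-biasing by $\calZ(0,0;0,\e)\sqrt{2\pi\e}$ turns the weighted modulus of continuity exactly into that of a Brownian bridge) plus a single lower-tail bound on $\g_\e(0)$ from Proposition~\ref{p:stkpzeq}~\ref{p:stail} to discard the event where the partition function is small. Your route is viable: the uniform negative-moment input you flag does follow from the same short-time lower-tail estimate of \cite{dg} (by the scaling identity, $\calZ_\beta(0,0;0,1)=(2\pi)^{-1/2}\exp(c\beta\,\g_{\beta^4}(0))$ in law, so negative moments in fact improve as $\beta\downarrow0$), and the positive moments of the point-to-point kernels are uniformly comparable to heat kernels for $\beta\le 1$, so the H\"older separation and the Gaussian computation giving $C_m|t-s|^m$ go through; the reduction to $\cdrp_\beta(0,0;0,1)$ with $\beta=\e^{1/4}$ is also correct. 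What the paper's approach buys is the complete avoidance of quantitative moment/UI analysis (the Brownian relation is an exact identity and Scheff\'e does the rest); what yours buys is a more standard, self-contained scheme that does not need the size-biasing identity of \cite{akq}, at the cost of the uniform negative-moment estimates. Two small imprecisions to fix: $\calZ_\beta\to p$ should be asserted in $L^2$ (hence in probability) via the chaos expansion rather than almost surely, which is all the UI argument requires; and when invoking Scheff\'e you should note that each $\rho_\beta$ and the limit are genuine probability densities so pointwise (or $L^1$) convergence of densities yields weak convergence of the finite-dimensional laws.
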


\begin{remark}  {The proof of Theorem \ref{thm:ann_short} appears in Section \ref{sec4.1}. With minor modification in the proof, the above theorem can be extended to include endpoints of the form $x\sqrt{\e}$. The resulting distributional limit is then a Brownian bridge on $[0,1]$ starting at $0$ and ending at $x$. We also remark that we expect Theorem \ref{thm:ann_short} to hold true even in the quenched case. However, some of our arguments, in particular the tightness, do not generalize to the quenched case. We hope to explore this direction in future works.}
\end{remark}

Our next result concerns the tightness and annealed convergence of the $\cdrp$ in the long-time regime and gives a rigorous justification of the $2/3$ scaling exponent discussed in Section \ref{sec:bac}. The limit is given in terms of the directed landscape constructed in \cite{dov,mqr} which arises as a universal full scaling limit of several zero-temperature models \cite{dv21}. Below we briefly introduce this limiting model before stating our result. 

The directed landscape $\calL$ is a random continuous function $\R_{\uparrow}^4 \to \R$ that satisfies the metric composition law
\begin{align}\label{def:metcomp}
	\calL(x,s;y,t)=\max_{z\in \R} \left[\calL(x,s;z,r)+\calL(z,r;y,t)\right],
\end{align} 
with the property that $\calL(\cdot,t_i;\cdot,t_i+s_i^3)$ are independent for any set of disjoint intervals $(t_i,t_i+s_i^3)$. As a function in $x,y$, $\calL(x,t;y,t+s^3)\stackrel{d}{=}s\cdot\mathcal{S}(x/s^2,y/s^2)$, where $\mathcal{S}(\cdot,\cdot)$ is a {parabolic} Airy Sheet. We omit definitions of the {parabolic} Airy Sheet (see Definition 1.2 in \cite{dov}) except that {$\mathcal{S}(0,\cdot)\stackrel{d}{=}\mathcal{A}(\cdot)$} where {$\mathcal{A}$ is the parabolic $\operatorname{Airy}_2$ process and $\mathcal{A}(x)+x^2$ is the (stationary) $\operatorname{Airy}_2$ process constructed in \cite{ps02}}.

\begin{definition}[Geodesics of the directed landscape] \label{def:geo}
	For $(x,s;y,t)\in \R_{\uparrow}^4$, a geodesic from $(x,s)$ to $(y,t)$ of the directed landscape is a random continuous function $\Gamma: [s,t]\to \R$ such that $\Gamma(s)=x$ and $\Gamma(t)=y$ and for any  $s\le r_1< r_2 < r_3 \le t$ we have
	\begin{align*}
		\calL\left(\Gamma(r_1),r_1;\Gamma(r_3),r_3\right)=\calL\left(\Gamma(r_1),r_1;\Gamma(r_2),r_2\right)+\calL\left(\Gamma(r_2),r_2;\Gamma(r_3),r_3\right).
	\end{align*}
	Thus the geodesic precisely contain the points where the equality holds in \eqref{def:metcomp}. Given any $(x,s;y,t)\in \R_{\uparrow}^4$, by Theorem 12.1 in \cite{dov}, it is known that almost surely there is a unique geodesic $\Gamma$ from $(x,s)$ to $(y,t)$.
\end{definition}

\begin{theorem}[Long-time $\cdrp$ path tightness]\label{ltight}
	Fix $\e > (0,1]$. $V \sim \cdrp(0, 0; 0, \e^{-1}).$  Define a random continuous function $L^{(\e)}:[0,1]\to \R$ as $L_t^{(\e)} := \e^{2/3}V({\e^{-1}t})$. We have the following:
	\begin{enumerate}[label=(\alph*), leftmargin=15pt]
		\item \label{tght} {Let $\Pr^{\e}$ denote the annealed law of $L^{(\e)}$, which is viewed as a random variable in the space of continuous functions on $[0,1]$ equipped with uniform topology and Borel $\sigma$-algebra. The sequence $\Pr^{\e}$ is tight w.r.t.~$\varepsilon$.} 
		\item \label{pt} For each $t\in (0,1)$, $L_t^{(\e)}$ converges weakly to $\Gamma(t\sqrt{2})$, where $\Gamma(\cdot)$ is the geodesic of directed landscape from $(0,0)$ to $(0,\sqrt{2})$. 
	\end{enumerate}
\end{theorem}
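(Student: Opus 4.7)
\medskip
\noindent\textbf{Proof proposal.} The plan is to treat the two assertions in order: part \ref{pt} is at heart a Laplace-method argument on the one-point marginal density of the $\cdrp$, transported through the KPZ scaling $\log \calZ \to \calL$; part \ref{tght} then upgrades this pointwise picture to process-level tightness via a quantitative modulus-of-continuity estimate.

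For part \ref{pt}, the starting point is the explicit density of $L_t^{(\e)}$ which, by Definition \ref{def:cdrp} and the Chapman--Kolmogorov identity \eqref{eq:chapman}, is given under the quenched law by
\[
    f^{(\e)}_t(a) = \e^{-2/3}\,\frac{\calZ(0,0;\e^{-2/3}a,\e^{-1}t)\,\calZ(\e^{-2/3}a,\e^{-1}t;0,\e^{-1})}{\calZ(0,0;0,\e^{-1})}.
\]
Taking the logarithm, subtracting the deterministic parabolic counterterms, and multiplying by $\e^{1/3}$ produces the sum of two rescaled log partition functions on the disjoint time intervals $[0,\e^{-1}t]$ and $[\e^{-1}t,\e^{-1}]$, which are independent under the white-noise law. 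Applying the convergence of the narrow-wedge KPZ equation to the parabolic $\operatorname{Airy}_2$ process at the level of a single time slice (as in, e.g., Quastel--Sarkar or Vir\'ag), jointly to the two independent pieces, the rescaled log-density converges in distribution on compact sets in $a$ to
\[
    F(a) := \calL(0,0;a,t\sqrt{2}) + \calL(a,t\sqrt{2};0,\sqrt{2}) - \calL(0,0;0,\sqrt{2}).
\]
By the metric composition law \eqref{def:metcomp} and the a.s.\ uniqueness of the geodesic from Definition \ref{def:geo}, $F$ is nonpositive and vanishes uniquely at $\Gamma(t\sqrt{2})$. A Laplace argument then shows that the density, of the form $\exp(\e^{-1/3}F^{(\e)})$ up to normalization, concentrates on $\Gamma(t\sqrt{2})$. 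Closing this argument requires a quadratic lower bound for $-F$ near its maximum together with uniform upper-tail control on the rescaled log partition functions over compact spatial windows, both available from standard $\operatorname{Airy}_2$ modulus and tail estimates.

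For part \ref{tght}, Prokhorov's theorem reduces the task to one-point tightness of $L_t^{(\e)}$ for each $t\in(0,1)$, which is supplied by part \ref{pt}, together with the modulus-of-continuity bound
\[
    \lim_{\delta\downarrow 0}\,\limsup_{\e\downarrow 0}\,\Pr\Big(\sup_{|s-t|\le\delta,\,s,t\in[0,1]}|L_s^{(\e)}-L_t^{(\e)}|>\eta\Big)=0\qquad\text{for every }\eta>0.
\]
The approach is to establish this bound in the stronger \emph{quenched} sense on a high-probability event for $\xi$. Conditionally on $\xi$ the $\cdrp$ is a Gibbsian tilt of a Brownian bridge by the environment, so once increments of the relevant log partition functions on mesoscopic space-time boxes are controlled, classical Brownian modulus estimates propagate to quenched modulus bounds on $L^{(\e)}$. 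This is precisely where the quantitative quenched modulus of continuity for long-time $\cdrp$ flagged in the abstract enters. Averaging over $\xi$ and combining with the one-point tightness yields tightness of $\Pr^{\e}$ in $C([0,1],\mathcal{C})$.

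The principal obstacle is the quenched modulus estimate itself: it demands simultaneous control of the KPZ line ensemble on spatial scales as fine as $\e^{2/3}$ and uniform exclusion of rare favourable regions of the noise that would drag the polymer away from its typical trajectory. A secondary technical point is upgrading the distributional convergence of rescaled log partition functions to density-level convergence of $f^{(\e)}_t$ in part \ref{pt}, which requires uniform-in-$\e$ tail bounds on the full spatial profile, obtained by importing KPZ line ensemble tail estimates.
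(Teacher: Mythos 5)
\textbf{Part \ref{tght}: there is a genuine gap.} Your reduction of tightness to a uniform modulus-of-continuity bound (plus the trivial one-point tightness, e.g.\ $L_0^{(\e)}=0$) is the right frame, but the modulus bound \emph{is} the theorem, and the mechanism you propose for it does not work as stated. Conditionally on $\xi$, the $\cdrp$ is not a tilt of Brownian bridge by a quantity you control for free: the quenched two-point density of $(L_s^{(\e)},L_t^{(\e)})$ is a ratio of SHE kernels whose spatial increments at transversal scale $\e^{-2/3}$ are exactly the object that must be estimated, and there is no a priori domination of the quenched polymer modulus by a Brownian modulus (the Brownian comparison of Lemma \ref{l3} is an annealed identity, not a quenched one). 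What is actually needed, and what the paper proves, is a quantitative quenched bound on $\Pr^{\xi}(|L_s^{(\e)}-L_t^{(\e)}|\ge m|t-s|^{1/2-\delta})$: one writes the two-point quenched density as $\exp[\e^{-1/3}F_{\vec t;\e}]$ up to normalization and establishes a parabolic-decay estimate for the three-sheet sum $F_{\vec t;\e}$, uniform in $\e$ and over all of $\R^2$ (Lemma \ref{l:ltail}), together with a lower bound on the denominator $\calZ(0,0;0,\e^{-1})$; since the intermediate time gap $\e^{-1}(t-s)$ can be small, this forces the new short-time local fluctuation estimate (Proposition \ref{p:stlf}) in addition to the long-time inputs. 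A L\'evy-type dyadic chaining then upgrades the two-point bound to the quenched and annealed modulus of continuity (Propositions \ref{p:ltd}, \ref{p:qmc}, Corollary \ref{p:amc}), from which tightness follows. You correctly flag this quenched estimate as the principal obstacle, but you give no route to it, so the proof of \ref{tght} is incomplete.

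\textbf{Part \ref{pt}: correct in outline, but a different route from the paper.} The paper does not run a Laplace argument here; it quotes the localization results of \cite{dz22}: the quenched density of $V(\e^{-1}t)$ has an a.s.\ unique mode $\mathcal{M}_{t,\e^{-1}}$, the polymer point lies within an $O(1)$ window of the mode, hence $\e^{2/3}[V(\e^{-1}t)-\mathcal{M}_{t,\e^{-1}}]\to 0$ in probability, while $\e^{2/3}\mathcal{M}_{t,\e^{-1}}$ converges in law to $\Gamma(t\sqrt{2})$. Your argmax/Laplace route --- two independent one-spatial-variable KPZ slices, each converging to a rescaled parabolic $\operatorname{Airy}_2$ process by \cite{qs20}, so that only one-variable (not sheet) convergence is needed, which is precisely why Conjecture \ref{conj:sheet} is avoidable at the level of a single time --- is viable and essentially parallels the paper's proof of Theorem \ref{thm:ann_long_pr} specialized to $k=1$. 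Two caveats: (i) uniform-on-compact convergence of the log-density is not enough; you must rule out escape of mass, which requires a parabolic decay bound on the rescaled log-partition sum valid on all of $\R$, uniformly in $\e$, plus a lower bound on the normalization (this is Lemma \ref{l:ltail} and Corollary \ref{l:lqmcc1} in the paper, and is more than ``upper-tail control over compact spatial windows''); (ii) the ``quadratic lower bound for $-F$ near its maximum'' you invoke neither holds for these Airy-type limits nor is needed --- a.s.\ uniqueness of the maximizer together with uniform-on-compact convergence and the tail control suffices, as in Lemma \ref{l5}. With those repairs, \ref{pt} goes through by your method; the paper's citation of \cite{dz22} is simply shorter, while your route has the merit of being self-contained given the spatial tail estimates.
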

The above path tightness result under $2/3$ scaling is first such result among all positive-temperature models. Part \ref{pt} of the above theorem shows that this $2/3$ scaling is indeed correct: upon this scaling, the $\cdrp$ paths have pointwise non-trivial weak limit.

In the same spirit, we have the following short- and long-time tightness result for point-to-line $\cdrp$.

\begin{theorem}[Point-to-line $\cdrp$ path tightness]\label{ltight.ptl}
	Fix $\e \in (0,1]$. Suppose $X\sim \cdrp(0,0;*,\e)$ and $V \sim \cdrp(0, 0; *,\e^{-1}).$  Define two random continuous functions $Y_{*}^{(\e)}, L_{*}^{(\e)}:[0,1]\to \R$ as $Y_*^{(\e)}(t):=\e^{-1/2}X({\e t})$ and $L_*^{(\e)}(t) := \e^{2/3}V({\e^{-1}t})$. We have the following:
	\begin{enumerate}[label=(\alph*), leftmargin=15pt]
		\item \label{stght} {If we let $\Pr_{*,\texttt{s}}^{\e}$ denote the annealed law of $Y_*^{(\e)}(\cdot)$, then as $\varepsilon \downarrow 0$, $\Pr_{*,\texttt{s}}^{\e}$ converges weakly to $\Pr_{B_*}$, where $\Pr_{B_*}$ is the measure on $C([0,1])$ generated by a standard Brownian motion.}
		\item \label{ltght} If we let $\Pr_{*,\texttt{L}}^{\e}$ denote the annealed law of $L_*^{(\e)}(\cdot)$, then the sequence $\Pr_{*,\texttt{L}}^{\e}$ is tight w.r.t.~$\e$.
		\item \label{ptl} $L_*^{(\e)}(1)$ converges weakly to $2^{1/3}\mathcal{M}$, where $\mathcal{M}$ is the almost sure unique maximizer of $\operatorname{Airy}_2$ process minus {the parabola $x^2$}. 
	\end{enumerate}
\end{theorem}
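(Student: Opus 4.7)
The strategy for all three parts is to realize the point-to-line $\cdrp$ as a mixture of point-to-point $\cdrp$s over the (random) endpoint. From \eqref{eq:chapman} together with Definitions \ref{def:cdrp}--\ref{def:cdrp2}, for any Borel set $A \subset C([0,T])$,
\[
\Pr_*^\xi[X \in A] \;=\; \int_\R p_*^\xi(y)\, \Pr^\xi_{(0,0;y,T)}[X \in A]\, dy, \qquad p_*^\xi(y) := \frac{\calZ(0,0;y,T)}{\calZ(0,0;*,T)},
\]
with $T = \e$ for part (a) and $T = \e^{-1}$ for parts (b)--(c). Conditional on the white noise, the point-to-line path is thus a point-to-point path with a random endpoint drawn from the quenched density $p_*^\xi$. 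Parts (a) and (b) then reduce to (i) tightness of the rescaled endpoint marginal, and (ii) uniform tightness of the rescaled point-to-point annealed law over bounded rescaled endpoints, while part (c) reduces to weak convergence of the rescaled endpoint marginal.

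For (a) and (b), the quenched modulus-of-continuity estimates driving Theorems \ref{thm:ann_short} and \ref{ltight} carry through, with essentially the same proofs, to point-to-point $\cdrp$ with endpoint $(y,T)$ in place of $(0,T)$, provided the constants are tracked and shown to be uniform over $|y|\le M$ in the rescaled coordinates. To establish tightness of the rescaled endpoint itself, I compute its annealed density. In (a), writing $\tilde y = \e^{-1/2}y$, the numerator of $p_*^\xi(\sqrt\e\,\tilde y)$ has mean equal to the heat kernel $p(\sqrt\e\,\tilde y,\e) = (2\pi\e)^{-1/2} e^{-\tilde y^2/2}$, which yields a Gaussian envelope in $\tilde y$ once combined with a negative-moment bound on the point-to-line partition function. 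In (b), the analogous computation with $y = T^{2/3}\tilde y$ and $T = \e^{-1}$ produces a Gaussian envelope at the $T^{1/3}$-scale of centered $\log \calZ$, which remains tight in $\tilde y$ after the $\log\calZ$ fluctuations are absorbed by standard one-point KPZ upper-tail estimates.

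For (c), in rescaled coordinates $\tilde y = \e^{2/3}y$ the quenched endpoint density satisfies
\[
\tilde p_*^{\e,\xi}(\tilde y) \;\propto\; \exp\bigl(\e^{-1/3}\, h^{(\e)}(\tilde y)\bigr), \qquad h^{(\e)}(\tilde y) := \e^{1/3}\bigl[\log \calZ(0,0;\e^{-2/3}\tilde y,\e^{-1}) - c(\e)\bigr],
\]
where $c(\e)$ denotes the standard long-time KPZ centering. By the known convergence of the centered and rescaled log-SHE to the parabolic $\operatorname{Airy}_2$ process at landscape time $\sqrt{2}$ (matching the scaling in Theorem \ref{ltight}(b)), $h^{(\e)}$ converges in distribution, process-level on compacts and with uniform quadratic upper tails, to $2^{1/6}\mathcal{A}(\cdot/2^{1/3})$ minus a deterministic parabola. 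The diverging prefactor $\e^{-1/3}\to\infty$ then forces Laplace-type concentration of $\tilde p_*^{\e,\xi}$ near the argmax of $h^{(\e)}$, and combined with almost-sure uniqueness of the argmax of parabolic $\operatorname{Airy}_2$ and an argmax-continuity argument (with tail control to rule out escape to infinity), this yields weak convergence of the endpoint to $2^{1/3}\mathcal{M}$.

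The crucial step is the Laplace argument in (c): it requires (i) process-level convergence of $h^{(\e)}$ together with uniform quadratic upper tails to prevent mass escape to infinity, and (ii) a quantitative lower bound on the local curvature of $h^{(\e)}$ near its argmax, so that the effective concentration window shrinks with $\e$. Both ingredients should be assembled from existing KPZ-to-landscape convergence results together with the quantitative quenched modulus-of-continuity estimates for long-time $\cdrp$ established earlier in this paper. The endpoint-shifted extensions of Theorems \ref{thm:ann_short} and \ref{ltight} required in (a)--(b) are a routine bookkeeping modification of the original proofs, and the only genuine novelty in (a)--(b) lies in the density computations above.
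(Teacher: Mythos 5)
Your quenched mixture identity is correct, but the reduction you build on it for parts \ref{stght} and \ref{ltght} has a genuine gap: the endpoint density $p_*^\xi(y)=\calZ(0,0;y,T)/\calZ(0,0;*,T)$ and the conditional point-to-point law $\Pr^\xi_{(0,0;y,T)}$ are functions of the \emph{same} noise, so the annealed point-to-line probability of a bad modulus-of-continuity event is $\Ex\bigl[\int_\R p_*^\xi(y)\,\Pr^\xi_{(0,0;y,T)}(\m{Bad})\,dy\bigr]$, and this is not controlled by ``tightness of the endpoint marginal'' plus ``uniform tightness of the annealed point-to-point laws over $|y|\le M$'': the random weight and the quenched probability are correlated, and $\Ex[p_*^\xi(y)\Pr^\xi_y(\m{Bad})]$ cannot be factored into $\Ex[p_*^\xi(y)]$ times an annealed point-to-point bound. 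What one actually needs is a quenched estimate holding \emph{simultaneously} over all endpoints in the window on a single high-probability noise event, together with a lower bound on the point-to-line partition function; this is precisely the content of Propositions \ref{p:ltd.ptl} and \ref{p:qmc.ptl}, whose proofs exploit that Lemma \ref{l:ltail} is a process-level bound in all spatial arguments (including the endpoint) and use the process-level lower bound on $\int_\R\calZ$ from Proposition 4.1 of \cite{cgh}. So the ``routine bookkeeping'' you defer is the real work, and once it is done part \ref{ltght} is immediate from Proposition \ref{p:qmc.ptl} (the paper's proof is a one-liner), making the mixture detour unnecessary. For part \ref{stght} the paper also takes a shorter route that avoids endpoint analysis and the correlation issue entirely: a Markov inequality weighted by $\calZ(0,0;*,\e)$, the point-to-line Brownian identity of Lemma \ref{l3} (an exact identity, so no decoupling is needed), and the lower-tail control of $\g_\e(*)$ via the infimum-process bound of Proposition 4.4 in \cite{dg}.

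For part \ref{ptl}, your Laplace/argmax strategy is viable in outline and genuinely different from the paper, which instead invokes the localization results of \cite{dz22} (a.s.\ uniqueness of the mode of the quenched endpoint density, the point-to-line version of Corollary 7.3 there giving $O(1)$ localization around the mode, and Theorem 1.8 there giving convergence of the rescaled mode to $2^{1/3}\mathcal{M}$). However, your declared crucial ingredient (ii) --- a quantitative lower bound on the local curvature of $h^{(\e)}$ near its argmax so that the concentration window shrinks with $\e$ --- is both unavailable and unnecessary: the limiting parabolic $\operatorname{Airy}_2$ process is locally Brownian, so no curvature bound holds, and none is needed, since weak convergence of $L_*^{(\e)}(1)$ only requires concentration within an arbitrary \emph{fixed} $\delta$-neighborhood of the argmax; uniqueness of the limiting argmax, uniform-on-compact convergence (via Skorokhod representation), the parabolic upper-tail bound of Lemma \ref{l:ltail} with $k=1$, and a denominator lower bound already give this, exactly as in Lemma \ref{l5} and Step 3 of the proof of Theorem \ref{thm:ann_long_pr}. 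You should also track the scaling constants in the one-variable limit of $\h_{\e^{-1}}$ carefully: the vertical prefactor is irrelevant for the argmax, but writing the limit as a rescaled $\mathcal{A}$ ``minus a deterministic parabola'' on top of the parabola already contained in the parabolic $\mathcal{A}$ would shift the maximizer away from $2^{1/3}\mathcal{M}$, so the identification must be stated precisely (or imported from \cite{dz22} as the paper does).
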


We now explain how the pointwise weak convergence result in Theorem \ref{ltight} \ref{pt} can be upgraded to a process-level convergence modulo the following conjecture.

\begin{conjecture}[KPZ sheet to Airy sheet]\label{conj:sheet}
	Set $\h_{t}(x, y):= t^{-1/3}[\log\calZ(t^{2/3}x, 0; t^{2/3}y, t)+ \frac{t}{24}].$ As $t\to \infty$ we have the following convergence in law (as functions in $(x,y)$)
	\begin{align*}
		2^{1/3}\h_{t}(2^{1/3}x, 2^{1/3}y)\stackrel{d}{\rightarrow} \mathcal{S}(x, y)
	\end{align*}
	in the uniform-on-compact topology.  Here $\mathcal{S}$ is the parabolic Airy sheet. 
\end{conjecture}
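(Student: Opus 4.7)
The plan is to prove Conjecture \ref{conj:sheet} in two stages: first establish tightness of the rescaled field $(x,y)\mapsto 2^{1/3}\h_t(2^{1/3}x,2^{1/3}y)$ on compact sets, and then identify any subsequential limit as the parabolic Airy sheet $\mathcal{S}$.

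For tightness, I would apply a Kolmogorov-type continuity criterion to the rescaled field after subtracting the expected parabolic profile. The needed moment bounds on spatial increments of $\log \calZ$ can be extracted from the sharp one- and two-point tail estimates of Corwin-Ghosal-Hammond, Das-Tsai and related work, combined with two-point ratio bounds on $\calZ$ available from the chaos expansion \eqref{eq:chaos}. The parabolic decay $\log \calZ(x,0;y,t)\approx -(x-y)^2/(2t)$ for $|x-y|$ large then guarantees that the family of rescaled fields does not leak to infinity on compacts uniformly in $t$.

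The main task is identification of the limit. The one-point convergence $\h_t(0,0)\Rightarrow F_{\mathrm{GUE}}$ is the Amir-Corwin-Quastel result, and the one-parameter marginal $y\mapsto 2^{1/3}\h_t(0,2^{1/3}y)$ should converge to the parabolic Airy$_2$ process via the convergence of the top curve of the KPZ line ensemble (Corwin-Hammond) to the Airy line ensemble (Quastel-Sarkar, Vir\'ag). Upgrading to joint convergence in $(x,y)$ would then proceed by invoking the Dauvergne-Ortmann-Vir\'ag characterization of $\mathcal{S}$ as the unique random field produced from the Airy line ensemble by a last-passage-type variational formula, and showing that $\h_t$ admits a compatible encoding through the KPZ line ensemble in the limit. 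Alternatively, one may try to interchange limits in integrable positive-temperature models (log-gamma or O'Connell-Yor polymers) that converge to both the SHE (via intermediate disorder) and to the directed landscape (via KPZ scaling), provided sufficiently uniform partition function estimates can be produced.

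The hardest step — and the reason the statement is posed here as a conjecture — is precisely the joint identification in two spatial arguments. Whereas convergence in a single argument reduces to asymptotics of Fredholm determinant formulas for the top curve, matching joint distributions over $(x,y)$ requires either a variational representation of $\calZ(x,0;y,t)$ in the spirit of the metric composition law \eqref{def:metcomp}, or a strong coupling between the SHE and the Airy line ensemble that respects both spatial coordinates with an error that is uniform in $t$. Recent strong-coupling advances make this conceivable, but delivering such a coupling under the intermediate-disorder scaling appears to be the essential obstruction to turning this conjecture into a theorem.
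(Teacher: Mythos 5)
The statement you have attempted is not proved in the paper at all: it is stated as Conjecture \ref{conj:sheet}, the authors explicitly remark that only the one-variable marginal convergence is known (the result of \cite{qs20}), and the conjecture is only ever used as a hypothesis, namely in Theorem \ref{thm:ann_long_pr}. So there is no paper proof to compare against, and your write-up, by its own admission, is not a proof either; it is a strategy outline whose central step is left open.

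Concretely, the tightness half of your plan is plausible: parabolic-decay and local-fluctuation estimates of the type recorded in Proposition \ref{p:ltkpzeq} and Lemma \ref{l:ltail} do give control of the rescaled field on compacts, uniformly in $t$. The genuine gap is the identification of subsequential limits as $\mathcal{S}$. Convergence of the one-point law (Amir--Corwin--Quastel) and of the one-parameter marginals $x\mapsto\h_t(x,y)$ or $y\mapsto\h_t(x,y)$ (Quastel--Sarkar) does not determine the joint law in $(x,y)$: the parabolic Airy sheet is characterized in \cite{dov} through a last-passage variational coupling with the Airy line ensemble, and to invoke that characterization you would need a positive-temperature analogue --- a two-parameter variational or coupling structure for $\log\calZ(x,0;y,t)$ (or a strong coupling of the KPZ line ensemble with the Airy line ensemble that is compatible in both spatial arguments and uniform as $t\to\infty$). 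Producing such a structure is exactly the missing idea, not a verification that can be delegated to existing estimates, and your alternative route through integrable polymer prelimits faces the same obstruction in the form of an unjustified interchange of limits. As a proof the proposal therefore fails at its central step; as a description of why the statement is posed as a conjecture, it locates the difficulty correctly.
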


When either $x$ or $y$ is fixed, the above weak convergence as a function in one variable is proven in \cite{qs20}. For zero-temperature models, such convergence has been shown recently in \cite{dv21} for a large class of integrable models. It remains to show that their methods can be extended to prove the Airy sheet convergence for positive-temperature models such as above. 

\medskip

Assuming the validity of Conjecture \ref{conj:sheet}, we can strengthen Theorem \ref{ltight} \ref{pt} to the following statement.
\begin{theorem}[Process annealed long-time convergence]\label{thm:ann_long_pr}
	Fix $\e>0$. Let $V\sim \cdrp(0, 0;0, \e^{-1})$. Define $L_{t}^{(\e)}:=\e^{2/3}V(\e^{-1}t)$, $t \in [0,1]$. This scaling produces a measure on $C([0,1])$ for each $\e>0$ conditioned on $\xi$. Assume Conjecture \ref{conj:sheet}. For $t\in (0,1),$ $\e \downarrow 0$, the annealed law of $L_{t}^{(\e)}$ as a process in $t$ converges weakly to $\Gamma(\sqrt{2}t)$, where $\Gamma(\cdot)$ is the geodesic of the directed landscape $\calL$ from $(0,0)$ to $(0, \sqrt{2})$.
\end{theorem}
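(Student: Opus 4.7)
The plan is to combine tightness with finite-dimensional convergence. Since tightness of $\{\Pr^\e\}$ in $C([0,1],\mathcal{C})$ is already provided by Theorem \ref{ltight} \ref{tght}, and the candidate limit $t\mapsto\Gamma(\sqrt{2}t)$ has almost surely continuous sample paths (Definition \ref{def:geo}), it suffices to upgrade the one-point convergence of Theorem \ref{ltight} \ref{pt} to joint convergence of finitely many time coordinates: for any $0=t_0<t_1<\cdots<t_k<t_{k+1}=1$, show
\begin{align*}
\bigl(L^{(\e)}_{t_1}, \ldots, L^{(\e)}_{t_k}\bigr) \xrightarrow{d} \bigl(\Gamma(\sqrt{2}t_1), \ldots, \Gamma(\sqrt{2}t_k)\bigr).
\end{align*}

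The main idea is to execute a Laplace/concentration analysis on the explicit quenched joint density. By Definition \ref{def:cdrp}, Chapman--Kolmogorov \eqref{eq:chapman}, and the change of variables $v_j = \e^{-2/3} x_j$, the density equals
\begin{align*}
p_\e^\xi(x_1,\ldots,x_k) \;=\; \e^{-2k/3}\, \frac{\prod_{j=0}^{k} \calZ\bigl(\e^{-2/3} x_j,\, \e^{-1} t_j;\, \e^{-2/3} x_{j+1},\, \e^{-1} t_{j+1}\bigr)}{\calZ(0, 0; 0, \e^{-1})},
\end{align*}
with $x_0=x_{k+1}=0$. Rewriting each factor via the KPZ-sheet normalization $\h_\tau$ of Conjecture \ref{conj:sheet}, using time-stationarity of the SHE, and setting $\tau_j := s_j/\e$ with $s_j := t_{j+1}-t_j$, the deterministic $\tau/24$ pieces in the numerator telescope against the corresponding piece in the denominator, and the log-density becomes
\begin{align*}
\log p_\e^\xi(x_1,\ldots,x_k) = -\tfrac{2k}{3}\log\e + \sum_{j=0}^{k} R_j^{(\e)}\bigl(x_j,\, x_{j+1}\bigr) - R_0^{(\e)}(0,0),
\end{align*}
where each $R_j^{(\e)}$ is a suitably rescaled version of the KPZ sheet built from the white noise restricted to the disjoint time slice $[\e^{-1}t_j, \e^{-1}t_{j+1}]$, and $R_0^{(\e)}$ is the analogous quantity for $[0,\e^{-1}]$.

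Next, invoke Conjecture \ref{conj:sheet} for each of the $k+2$ rescaled KPZ sheets. Since the sheets live on disjoint slices of the white noise, their Airy-sheet limits are mutually independent and can be coupled as the corresponding increments of a single directed landscape $\calL$. Combining the $2^{1/3}$ prefactor in the conjecture with the directed-landscape scaling $\calL(x,0;y,s)\stackrel{d}{=} s^{1/3}\mathcal{S}(x/s^{2/3},y/s^{2/3})$ (via $\alpha = 2^{1/6}$ in $\calL(\alpha^2 x, 0; \alpha^2 y, \alpha^3 s) \stackrel{d}{=} \alpha\, \calL(x,0;y,s)$) shows that the polymer length $\e^{-1}$ matches directed-landscape time $\sqrt{2}$. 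After carrying out this bookkeeping, the log density takes the asymptotic form
\begin{align*}
\log p_\e^\xi(x_1,\ldots,x_k) \;=\; \log Z_\e + 2^{-1/2}\e^{-1/3}\!\left[\sum_{j=0}^{k} \calL\bigl(x_j, \sqrt{2}t_j;\, x_{j+1}, \sqrt{2}t_{j+1}\bigr) - \calL(0,0;0,\sqrt{2})\right] + o(\e^{-1/3}),
\end{align*}
where $\log Z_\e$ is independent of $(x_1,\ldots,x_k)$. By the metric composition law \eqref{def:metcomp}, the bracketed quantity is nonpositive with equality exactly when $(x_1,\ldots,x_k)$ lies on a geodesic from $(0,0)$ to $(0,\sqrt{2})$. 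Since such a geodesic is almost surely unique (Definition \ref{def:geo}, Theorem 12.1 of \cite{dov}), the divergent factor $\e^{-1/3}\to\infty$ forces $p_\e^\xi$ to concentrate on the unique point $\bigl(\Gamma(\sqrt{2}t_1),\ldots,\Gamma(\sqrt{2}t_k)\bigr)$, yielding the desired finite-dimensional convergence.

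The main obstacle is making this Laplace-type argument fully rigorous. Two complications must be confronted: (i) Conjecture \ref{conj:sheet} is a single-sheet statement in uniform-on-compact topology, whereas the argument requires joint convergence across the $k+2$ disjoint segments together with the independent-increments structure of the directed landscape; this should follow from applying the conjecture to each segment separately and exploiting the temporal independence of $\xi$. (ii) Interchanging the $\e\downarrow 0$ limit with integration in the Laplace step requires integrability control of $p_\e^\xi$ outside compact $(x_1,\ldots,x_k)$-sets; for this one invokes the path-tightness in Theorem \ref{ltight} \ref{tght} to a priori localize the rescaled paths to compacts with high probability, after which the uniform-on-compact Airy-sheet convergence combined with the parabolic asymptotics of $\mathcal{S}$ drives the concentration.
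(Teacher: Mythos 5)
Your proposal follows essentially the same route as the paper's proof: tightness from Theorem \ref{ltight} \ref{tght} reduces the problem to finite-dimensional convergence; the quenched joint density is written as $\exp\big(\e^{-1/3}\times(\text{sum of rescaled KPZ sheets on the consecutive time increments})\big)$; Conjecture \ref{conj:sheet} applied segment-wise together with temporal independence of $\xi$ produces the directed-landscape field whose unique argmax is the geodesic; and a Laplace-type concentration on compact boxes, justified by a priori localization of the rescaled paths, yields the f.d.d.\ limit --- this is exactly Steps 1--3 of the paper, with your heuristic Laplace step formalized there as Lemma \ref{l5} (via Skorokhod representation and the geodesic localization bound of Proposition 12.3 in \cite{dov}). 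Two minor bookkeeping points, neither a gap: the prefactor works out to $2^{-1/3}\e^{-1/3}$ rather than $2^{-1/2}\e^{-1/3}$ (the concentration argument is insensitive to the positive constant), and the denominator $\calZ(0,0;0,\e^{-1})$ is $x$-independent and cancels upon normalizing the density, so no coupled convergence of it with the segment sheets is required.
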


\subsection{Proof Ideas}

Our main result on short-time and long-time tightness of $\cdrp$ (i.e., Theorems \ref{thm:ann_short}, \ref{ltight} and \ref{ltight.ptl}) follows a host of efforts that attempts to unravel the geometry of $\cdrp$ paths. In \cite{dz22}, the authors showed that the quenched density of point-to-point long-time $\cdrp$ exhibit pointwise localization. In particular, they showed any particular point on a point-to-point $\cdrp$ of length $t$ lives within a order $1$ window of a `favorite site' (depending only on the environment) and this favorite site varies in a $t^{2/3}$ window upon changing the environment. This suggests that the annealed law of polymers are within $t^{2/3}$ window \textit{pointwise}. Our theorems on long-time tightness extend this result to the \textit{full path} of the polymers. 

One of the key ingredients behind our tightness proofs is a detailed probabilistic understanding of the log-partition function of $\cdrp$. The log of the partition function of point-to-point $\cdrp$, i.e.,
\begin{align}
	\calH(x,s;y,t):=\log \calZ(x,s;y,t)
\end{align}
solves the KPZ equation with narrow wedge initial data. Introduced in \cite{kpz} as a model for random growth interfaces, KPZ equation has been extensively studied in both the mathematics and the physics communities (see \cite{ferrari2010random,quastel2011introduction,corwin2012kardar, hairer2013solving,hairer2014theory,quastel2015one,chandra2017stochastic,corwin2019} and the references therein). In \cite{acq}, the authors showed the one-point distribution of the KPZ equation $\calH(x,t):=\calH(0,0;x,t)$, has limiting Tracy-Widom GUE fluctuations of the order $t^{1/3}$ as $t\uparrow \infty$ (long-time regime), whereas fluctuations are  Gaussian of the order $t^{1/4}$ as $t\downarrow 0$ (short-time regime). Detailed information of the one-point tails of $\calH(x,t)$ as well as tail for the spatial process $\calH(\cdot,t)$ are rigorously proved in the mathematics works \cite{utail,ltail,cgh,t19,dt19} for long-time regime and in \cite{dg,lt21,llt21,t22} for short-time regime. 

For brevity, we only sketch the proof for our long-time path tightness result. The proof of short-time path tightness uses a relation of annealed law of $\cdrp$ with that of Brownian counterparts (Lemma \ref{l3}). The finite-dimensional convergence for the short-time case (Theorem \ref{thm:ann_short}) follows from chaos expansion and the same results for the long-time regime (Theorem \ref{ltight} \ref{pt} and Theorem \ref{ltight.ptl} \ref{ptl}) follow from the localization results in \cite{dz22}. Let us take a long-time polymer $V \sim \cdrp(0,0;0,\e^{-1})$ and scale it according to long-time scaling $L_t^{(\e)}=\e^{-2/3}V(\e^{-1}t)$ for $t\in [0,1]$. By the definition of the $\cdrp$ (Definition \ref{def:cdrp}), we see that the joint law of $(L_s^{(\e)},L_t^{(\e)})$ (where $0<s<t<1$) is proportional to $$\e^{-4/3}\exp\left[\Lambda_{(s,t);\e}(x,y)\right]$$
where
\begin{align}
	\Lambda_{(s,t);\e}(x,y):= \calH\big(0,0;x\e^{-\frac23},\tfrac{s}{\e}\big)+\calH\big(x\e^{-\frac23},\tfrac{s}{\e};y\e^{-\frac23},\tfrac{t}{\e}\big)+\calH\big(y\e^{-\frac23},\tfrac{t}{\e};0,\tfrac{1}{\e}\big)+\m{Err}_{(s,t);\e}. 
\end{align}
Here $\m{Err}_{(s,t);\e}$ is a correction term free of $x,y$ that one needs to add to extract meaningful fluctuation and tail results for the KPZ equation (see statement of Lemma \ref{l:ltail}). This correction term does not affect the joint density as it can be absorbed into the proportionality constant.

We next proceed to understand behaviors of the process $(x,y) \mapsto \Lambda_{(s,t);\e}(x,y)$. 
From \cite{acq}, it is known that for each fixed $s<t$ and $y\in \R$, the process $x\mapsto [\calH(x,s;y,t)+\frac{(x-y)^2}{2(t-s)}]$ is  stationary. Naively speaking, $x\mapsto \calH(x,s;y,t)$ looks like a negative parabola: $-\frac{(x-y)^2}{2(t-s)}$. Thus it is natural to expect 
\begin{align}
	\e^{1/3} \Lambda_{(s,t);\e}(x,y) \approx -\frac{x^2}{2s}-\frac{(y-x)^2}{2(t-s)}-\frac{y^2}{2(1-t)}.
\end{align}
One of the technical contributions of this paper is to rigorously prove the above approximation holds for all $x,y$. Given any $\nu>0$, we show with probability at least $1-\Con \exp(-\frac1\Con M^2)$,

$$\e^{1/3}\Lambda_{(s,t);\e}(x,y) \le M-(1-\nu)\left[\frac{x^2}{2s}+\frac{(y-x)^2}{2(t-s)}+\frac{y^2}{2(1-t)}\right], \mbox{ for all }x,y\in \R.$$

\smallskip

The precise statement of the above result appears in Lemma \ref{l:ltail}. This multivariate process estimate allows us to conclude the quenched density of $(L_s^{(\e)},L_t^{(\e)})$ at $(x,y)$ is exponentially small, whenever $\frac{|x-y|}{\sqrt{t-s}}\to \infty$. Armed with this understanding of quenched density, in Proposition \ref{p:ltd}, we show that given any $\delta>0$, with probability at least $1-\Con\exp(-\tfrac1\Con M^2)$ we have
\begin{align*}
	{|L_{s}^{(\e)}-L_t^{(\e)}|} \le M|t-s|^{\frac12-\delta}.
\end{align*}
In fact the sharp decay estimates of quenched density (Lemma \ref{l:ltail}) allows us to prove a quenched version of the above statement (Proposition \ref{p:ltd}). Due to exponentially tight probability bounds of the above two-point differences, Proposition \ref{p:ltd} can be extended to quenched modulus of continuity estimates (Proposition \ref{p:qmc}) by standard methods. This leads to the path tightness of long-time $\cdrp$.  

\subsection*{Outline}
The rest of the paper is organized as follows. Section \ref{sec:tools} reviews some of the existing results related to the KPZ equation before proving a useful result on the short-time local fluctuations of the KPZ equation (Proposition \ref{p:stlf}). We then prove in Section \ref{sec:moc} a multivariate spatial process tail bound (Lemma \ref{l:ltail}) and modulus of continuity results (Propositions \ref{p:ltd} and \ref{p:ltd.ptl}) that culminate in the quenched modulus of continuity estimate in Proposition \ref{p:qmc} and Proposition \ref{p:qmc.ptl}. In Section \ref{sec:annconv}, we prove Theorems \ref{thm:ann_short}, \ref{ltight}, and \ref{ltight.ptl}, and Theorem \ref{thm:ann_long_pr} (modulo Conjecture \ref{conj:sheet}). Lastly, proof of a technical lemma used in Section \ref{sec:tools} appears in Appendix \ref{app1}.

\subsection*{Acknowledgements} The authors thank Ivan Corwin, Shirshendu Ganguly and Promit Ghosal for suggesting the problem and useful discussions. The authors are grateful to the anonymous referee for their careful reading and many valuable comments on improving our manuscript. The project was initiated during the authors' participation in the `Universality and Integrability in Random Matrix Theory and Interacting Particle Systems' research program hosted by the Mathematical Sciences Research Institute (MSRI) in Berkeley, California in fall 2021. The authors thank the program organizers for their hospitality and acknowledge the support from NSF DMS-1928930.

\section{Short- and long-time tail results for KPZ equation} \label{sec:tools}

Throughout this paper we use $\Con = \Con(x,y,z, \ldots) > 0$ to denote a generic deterministic positive finite constant that may change from line to line, but dependent on the designated variables $x,y,z, \ldots$. We use sans serif fonts such as $\m{A}, \m{B}, \ldots$ to denote events and $\neg\m{A}, \neg\m{B}, \ldots$ to denote their complements.

\smallskip

In this section, we collect several estimates related to the short-time and long-time tails of the KPZ equation. We record existing estimates from the literature in Proposition \ref{p:ltkpzeq} and Proposition \ref{p:stkpzeq}. These estimates form crucial tools to our later proofs. For our analysis, we also require an estimate on the short-time local fluctuations of the KPZ equation which is not available in the literature.  We present this new estimate in Proposition \ref{p:stlf}. Its proof appears at the end of this section.

Recall the four-parameter stochastic heat equation $\calZ(x,s;y,t)$ from \eqref{eq:chaos}. We set
\begin{align}\label{calhd}
	\calH(x,s;y,t):=\log\calZ(x,s;y,t).
\end{align}
When $x=s=0$, we use the abbreviated notation $\calH(y,t):=\calH(0,0;y,t)$. As mentioned in the introduction, fluctuation and scaling of the KPZ equation varies as $t\downarrow 0$ (short-time) and $t\uparrow \infty$ (long-time). For the two separate regimes we consider the following scalings:
\begin{equation}
	\begin{aligned}\label{eq:kpzscal}
		\g_{s,t}(x,y) &: = \frac{\calH(\sqrt{\frac{\pi(t-s)}{4}}x, s;\sqrt{\frac{\pi(t-s)}{4}}y, t ) + \log\sqrt{2\pi (t-s)}}{(\frac{\pi(t-s)}{4})^{1/4}} & \mbox{  for the short-time regime,}\\
		\h_{s,t}(x,y) & := \frac{\calH((t-s)^{2/3}x, s; (t-s)^{2/3}y, t) + \frac{t-s}{24}}{(t-s)^{1/3}} & \mbox{ for the long-time regime.}
	\end{aligned}
\end{equation}
We will often refer to the above bivariate functions as short-time and long-time KPZ sheet. In particular, when both $s =0$ and $x = 0$, we use the shorthands
$\g_t(y):=\g_{0,t}(0,y)$, and $\h_t(y):=\h_{0,t}(0,y)$. 

\begin{remark}\label{re:gscal} The above scalings satisfy several distributional identities. For fixed $s<t$ and $y\in \R$, from chaos representation for SHE it follows that
	\begin{align*}
		\calZ(0,s;x,t)\stackrel{d}{=}\calZ(0,s;-x,t), \quad \calZ(x,s;y,t) \stackrel{d}{=} \calZ(0,0;y-x,t-s).
	\end{align*}
	where the equality in distribution holds as processes in $x$.  This leads to  $\g_{s,t}(x,y) \stackrel{d}{=} \g_{t-s}(x-y)$ and $\h_{s,t}(x,y) \stackrel{d}{=} \h_{t-s}(x-y),$ as processes in $x$. 
\end{remark}
The following proposition collects several probabilistic facts for the long-time rescaled KPZ equation.
\begin{proposition}\label{p:ltkpzeq} Recall $\h_t(x)$ from \eqref{eq:kpzscal}. The following results hold:
	\begin{enumerate}[label=(\alph*), leftmargin=15pt]
		\item \label{p:lstat} For each $t>0$, $\h_t(x)+{x^2}/{2}$ is stationary in $x$.
		\item \label{p:ltail} Fix $t_0>0$. There exists a constant $\Con=\Con(t_0)>0$ such that for all $t\ge t_0$ and $s>0$ we have
		\begin{align*}
			\Pr\left(|\h_t(0)|\ge s\right) \le \Con\exp\left(-\tfrac1\Con s^{3/2}\right).
		\end{align*}
		\item \label{p:ltlf} Fix $t_0 > 0$. There exists a constant $\Con = \Con(t_0) > 0$ such that for all $x \in \R, s> 0$, $t\ge t_0$, and $\gamma \in (0,1],$ we have
		\begin{align*}
			\Pr\left(\sup_{z \in [x, x+\gamma]}\left| \h_t(z)+ \tfrac{z^2}{2} - \h_t(x) - \tfrac{x^2}{2}\right| \ge s\sqrt{\gamma}\right) \le \Con\exp\left(-\tfrac1\Con s^{3/2}\right). 
		\end{align*}
	\end{enumerate}
\end{proposition}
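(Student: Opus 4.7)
The plan is to obtain each of the three assertions by combining known one-point and two-point estimates for the narrow-wedge KPZ equation with a short scaling computation for \ref{p:lstat} and a standard chaining argument for \ref{p:ltlf}.

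For part \ref{p:lstat}, I will appeal to the classical stationarity of the parabolically corrected narrow-wedge KPZ solution: setting $\tilde{\calH}(y,t) := \calH(y,t) + \frac{y^2}{2t} + \log\sqrt{2\pi t}$, the process $y \mapsto \tilde{\calH}(y,t)$ is spatially stationary for each fixed $t>0$, as recorded in \cite{acq}. A direct substitution using the definition of $\h_t$ in \eqref{eq:kpzscal} yields
\begin{align*}
\h_t(x) + \tfrac{x^2}{2} = t^{-1/3}\bigl[\tilde{\calH}(t^{2/3}x,t) + t/24 - \log\sqrt{2\pi t}\bigr],
\end{align*}
which inherits stationarity in $x$ from $\tilde{\calH}(\cdot, t)$.

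For part \ref{p:ltail}, the claim reduces to the sharp one-point tail estimates for the narrow-wedge KPZ equation cited in Section \ref{sec:bac}: the upper tail of order $\exp(-c s^{3/2})$ is in \cite{utail,cgh}, while the lower tail (which is even stronger, of order $\exp(-cs^3)$) is in \cite{ltail}. After the $t^{1/3}$ rescaling and $t/24$ centering of \eqref{eq:kpzscal}, both bounds hold uniformly for $t \geq t_0$, and combining the two sides yields the claimed two-sided estimate.

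For part \ref{p:ltlf}, by the stationarity proven in \ref{p:lstat} one may assume $x=0$. The strategy has two steps. First, I will extract from \cite{cgh,t19,dt19} a two-point increment bound of the form
\begin{align*}
\Pr\bigl(|\h_t(z_1)+\tfrac{z_1^2}{2}-\h_t(z_2)-\tfrac{z_2^2}{2}| > s\sqrt{|z_1-z_2|}\bigr) \le \Con \exp\bigl(-\tfrac{1}{\Con} s^{3/2}\bigr),
\end{align*}
valid uniformly for $z_1, z_2$ in a compact set and $t \geq t_0$. This encodes the local Brownian-like regularity of the parabolically corrected KPZ sheet with scaling exponent $1/2$ and $3/2$-stretched-exponential tails. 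Second, a standard dyadic chaining argument upgrades this to a bound on the supremum: summing two-point estimates at dyadic scales $\gamma 2^{-k}$ and using the fast decay of the $\exp(-cs^{3/2})$ tail controls the supremum of the increment by the pointwise bound, up to an absolute constant.

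The main obstacle is obtaining the two-point tail with the correct $\sqrt{|z_1-z_2|}$ scaling, stretched-exponential $3/2$-tail, and constants uniform in $t \geq t_0$. The cited references typically establish such estimates at fixed $t$ or at fixed scales, so some bookkeeping is needed to verify that the parabolic correction $z^2/2$ cancels correctly between the two spatial points and that a $t_0$-dependent but $t$-free constant can be extracted in the long-time regime; once this is in hand, the chaining step is routine.
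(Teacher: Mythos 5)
Parts \ref{p:lstat} and \ref{p:ltail} of your proposal are fine and coincide with the paper's treatment: this proposition is not proved from scratch but assembled from the literature, with \ref{p:lstat} taken from \cite{acq} (your substitution computation correctly transfers the stationarity of the parabolically shifted narrow-wedge solution to $\h_t(x)+x^2/2$) and \ref{p:ltail} from the one-point tail results of \cite{utail,ltail} or \cite{cgh}.

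For part \ref{p:ltlf}, however, your plan has a genuine gap. The paper's proof of \ref{p:ltlf} is a single citation: the statement is verbatim Theorem 1.3 of \cite{cgh}. You instead propose to first ``extract'' a two-point increment bound $\Pr(|\h_t(z_1)+\tfrac{z_1^2}{2}-\h_t(z_2)-\tfrac{z_2^2}{2}|>s\sqrt{|z_1-z_2|})\le \Con\exp(-\tfrac1\Con s^{3/2})$ from \cite{cgh,t19,dt19} and then chain. The problem is that this two-point bound is not an available independent input: the one-point tail estimates in \cite{t19,dt19} cannot produce it, since a union bound over the two points gives a tail in $s\sqrt{|z_1-z_2|}$ that degenerates as $|z_1-z_2|\downarrow 0$, and the only place such a local $\sqrt{|z_1-z_2|}$-modulus with uniform-in-$t$ constants appears in the cited literature is Theorem 1.3 of \cite{cgh} itself --- which is exactly the supremum statement you are trying to prove, so extracting the increment bound from it and then chaining back is circular (and unnecessary, since you could just cite it). If one does not invoke that theorem, the increment bound has to be proved, and the known route is not ``bookkeeping'' but a Brownian Gibbs resampling argument for the KPZ line ensemble: this is precisely why the present paper must prove the short-time analogue, Proposition \ref{p:stlf}, by hand via the monotonicity/stopping-domain Lemma \ref{l:mon} rather than by chaining one-point tails. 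So either cite \cite[Theorem 1.3]{cgh} directly for \ref{p:ltlf} (the paper's route), or replace your first step by a Gibbs-property argument in the spirit of the proof of Proposition \ref{p:stlf}; as written, the key estimate in your chain is unsupported. (A minor side remark: quoting the lower tail from \cite{ltail} as order $\exp(-cs^3)$ is not quite the stated exponent there, but since any exponent $\ge 3/2$ suffices for \ref{p:ltail}, this does not affect your part (b).)
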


The results in Proposition \ref{p:ltkpzeq} are a culmination of results from several papers. Part \ref{p:lstat} follows from \cite[Corollary 1.3 and Proposition 1.4]{acq}. The one-point tail estimates for KPZ equation are obtained in \cite{utail,ltail}. One can derive part \ref{p:ltail} from those results or can combine the statements of Proposition 2.11 and 2.12 in \cite{cgh} to get the same. Part \ref{p:ltlf} is Theorem 1.3 from \cite{cgh}.

The study of short-time tails was initiated in \cite{dg}. Below we recall some known results from the same paper.
\begin{proposition}\label{p:stkpzeq} Recall $\g_t(x)$ from \eqref{eq:kpzscal}. The following results hold:
	\begin{enumerate}[label=(\alph*), leftmargin=15pt]
		\item \label{p:sstat} For each $t>0$, $\g_t(x)+\tfrac{(\pi t/4)^{3/4}}{2t}x^2$ is stationary in $x$.
		\item \label{p:stail} There exists a constant $\Con>0$ such that for all $t\le 1$ and $s>0$ we have
		\begin{align*}
			\Pr(|\g_t(0)|> s) \le \Con\exp\left(-\tfrac1\Con s^{3/2}\right).
		\end{align*}
	\end{enumerate}
\end{proposition}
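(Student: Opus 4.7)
The plan is to treat Proposition \ref{p:stkpzeq} as a restatement of results from \cite{dg} in the rescaled coordinates used here, so the proof reduces to a change-of-variables check for part \ref{p:sstat} and an invocation of one-point tail bounds for part \ref{p:stail}. Expanding the definition in \eqref{eq:kpzscal} and rearranging,
\begin{align*}
    \g_t(x) + \tfrac{(\pi t/4)^{3/4}}{2t}x^2 = \left(\tfrac{\pi t}{4}\right)^{-1/4}\left[\calH\!\left(\sqrt{\tfrac{\pi t}{4}}\,x,\, t\right) + \tfrac{\pi t x^2/4}{2t} + \log\sqrt{2\pi t}\right],
\end{align*}
and the bracketed quantity is precisely $\calH(y,t) + y^2/(2t) + \log\sqrt{2\pi t}$ evaluated at $y = \sqrt{\pi t/4}\,x$, so part \ref{p:sstat} is equivalent to the classical stationarity of $y \mapsto \calH(y,t) + y^2/(2t)$ for each fixed $t > 0$.

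To establish this stationarity I would argue at the SHE level. Multiplying $\calZ(0,0;y,t)$ by $e^{y^2/(2t)}\sqrt{2\pi t}$ and unpacking the chaos expansion in \eqref{eq:chaos}, the $k$-th term carries a Gaussian product $\prod_\ell p(y_\ell - y_{\ell - 1}, s_\ell - s_{\ell - 1})$ with endpoints $y_0 = 0$, $y_{k+1} = y$. The affine change of variables $y_\ell \mapsto y_\ell + y s_\ell / t$ converts this $y$-dependent product into the $y = 0$ product, and the spatial translation invariance of the white noise $\xi$ then shows that the joint law in $y$ of $e^{y^2/(2t)}\sqrt{2\pi t}\,\calZ(0,0;y,t)$ is translation invariant. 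Taking logarithms and applying the rescaling identity above yields part \ref{p:sstat}.

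For part \ref{p:stail} I would appeal to the one-point tail estimates of \cite{dg}. The upper tail can be derived from positive integer moments of $\calZ(0,0;0,t)$, which admit an explicit representation as $(2\pi t)^{-n/2}$ times an expectation of an exponential of Brownian intersection local times. After accounting for the $t^{1/4}$ short-time scale built into $\g_t$, this yields $\Ex[(\g_t(0))_+^n] \le \Con^n n^{2n/3}$ uniformly in $t \le 1$; Markov's inequality optimized in $n$ then produces the $\exp(-s^{3/2}/\Con)$ upper tail.

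The main obstacle is the lower tail, which is the harder direction and forms the primary content of \cite{dg}: positive-moment methods give no information about the event that the positive random variable $\calZ$ is atypically small, and extracting even a stretched-exponential rate requires quantitative control on the spatial regularity of $\log \calZ$ at the diffusive scale. The approach of \cite{dg} combines a Gaussian-type concentration estimate for this regularity with a Girsanov-style tilt that converts small-ball events for $\calZ(0,0;0,t)$ into events on which sharp fluctuation bounds are available. I would take the resulting bound directly from \cite{dg} rather than reproving it, and combine it with the upper tail above to conclude part \ref{p:stail}.
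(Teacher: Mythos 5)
Your proposal is correct in substance, but it is noticeably more self-contained than the paper's argument, which disposes of Proposition \ref{p:stkpzeq} purely by citation: part \ref{p:sstat} is quoted as Lemma 2.11 of \cite{dg}, and part \ref{p:stail} is derived from the one-point tail estimates in \cite[Corollary 1.6, Theorem 1.7]{dg}. You instead (i) reduce \ref{p:sstat} by the (correct) change of variables to the classical stationarity of $y\mapsto \calH(y,t)+y^2/(2t)$ and reprove that fact at the level of the chaos series, and (ii) rederive the upper tail in \ref{p:stail} from the moment/intersection--local-time representation of $\Ex[\calZ(0,0;0,t)^n]$, citing \cite{dg} only for the lower tail. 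What the paper's route buys is brevity and uniform constants handed over ready-made for $t\le 1$; what your route buys is transparency, since the stationarity statement (originally from \cite{acq}) really is just the heat-kernel shear identity $\prod_\ell p(y_\ell-y_{\ell-1},s_\ell-s_{\ell-1})=e^{-y^2/(2t)}\prod_\ell p(\tilde y_\ell-\tilde y_{\ell-1},s_\ell-s_{\ell-1})$ combined with invariance of the noise. Two small points to tighten: the invariance you need is not spatial translation invariance of $\xi$ but invariance under the Lebesgue-measure-preserving shear $(x,s)\mapsto(x+as/t,s)$, and to get stationarity as a process you should run the argument with a fixed shift $a$ (yielding $\calZ^{\xi}(0,0;y+a,t)=e^{-ay/t-a^2/(2t)}\calZ^{\xi_a}(0,0;y,t)$ jointly in $y$, with $\xi_a$ the sheared noise) rather than shearing by the variable endpoint $y$ itself. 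Likewise, the moment-to-tail step for the upper bound needs a two-regime optimization in $n$, since for $1\le s\lesssim t^{-1/4}$ the pairwise local-time contribution of order $n^2\sqrt t$ dominates the $n^3t$ Bose-gas term (taking $n\asymp \delta s t^{-1/4}$ there, and $n\asymp\delta\sqrt s\,t^{-3/8}$ for larger $s$, does yield the claimed $\exp(-s^{3/2}/\Con)$ bound uniformly in $t\le1$). With those caveats your sketch is sound, and deferring the lower tail to \cite{dg} is exactly what the paper does as well, since that is the genuinely hard part of the short-time estimates.
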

Part \ref{p:sstat} follows from \cite[Lemma 2.11]{dg}. The one-point tail estimates for short-time rescaled KPZ equation are obtained in \cite[Corollary 1.6, Theorem 1.7]{dg}, from which one can derive part \ref{p:stail}.

For convenience, we write $m_t(x):=(\frac{\pi t}{4})^{3/4}\frac{x^2}{2t}$ to denote the parabolic term associated to the short-time scaling. The following result concerns the short-time analogue of Proposition \ref{p:ltkpzeq} \ref{p:ltlf}.

\begin{proposition}[Short-time local fluctuations of the KPZ equation]\label{p:stlf}
	There exists a constant $\Con>0$ such that for all $t\in (0,1)$, $x\in \R$, $\gamma\in (0,\sqrt{t})$ and $s>0$ we have
	\begin{align}\label{p24}
		\Pr\left(\sup_{z \in [x, x+\gamma]}\left|\g_t(z)+ m_t(z) - \g_t(x) - m_t(x)\right|\ge  s\sqrt\gamma\right) \le \Con\exp\left(-\tfrac1{\Con} s^{3/2}\right). 
	\end{align}
\end{proposition}
\begin{remark}\label{remk}
	The parabolic term $m_t(x)$ is steeper (as $t\le 1$) than the usual parabola that appears in the long-time scaling. This is the reason why Proposition \ref{p:stlf} requires $\gamma<\sqrt{t}$, whereas Proposition \ref{p:ltkpzeq} \ref{p:ltlf} holds for all $\gamma\in (0,1]$. 
\end{remark}

The proof of Proposition \ref{p:stlf} follows the same strategy as those of Proposition 4.3 and Theorem 1.3 in \cite{cgh} which employ the Brownian Gibbs property of the KPZ line ensemble (see \cite{CH16}). The same Brownian Gibbs property continues to hold for short-time $\g_t(\cdot)$ process (see Lemma 2.5 (4) in \cite{dg}). We include the proof of Proposition \ref{p:stlf} below for completeness after first describing its key proof ingredient.

\medskip

{We recall a property of $\g_t(\cdot)$ under \textit{monotone} events. Given an interval $[a,b]$, {we denote $\mathcal{B}(C([a,b]))$ to be the Borel $\sigma$-algebra on $C([a,b])$ generated by the uniform norm topology.} We call an event $A \in \mathcal{B}(C([a,b]))$ monotone w.r.t.~$[a,b]$ if for every pair of functions $f,g\in [a,b]\to \R$ with $f(a)=g(a)$, $f(b)=g(b)$ and $f(x)\ge g(x)$ for all $x\in (a,b)$,  we have
	\begin{align}\label{tom}
		f(x)\in A \implies g(x) \in A.
	\end{align}
	We call $(\mathfrak{a},\mathfrak{b})$ a stopping domain for $\g_t(\cdot)$ if $\{\mathfrak{a}\le a,\mathfrak{b}\ge b\}$ is measurable w.r.t.~$\sigma$-algebra generated by $(\g_t(x))_{x\not\in (a,b)}$ for all $a, b \in \R.$ A crucial property is the following:}
{\begin{lemma}\label{l:mon} Fix any $t>0$. For any $[a,b]\subset \R$, and a monotone set $A \in \mathcal{B}(C([a,b]))$ (w.r.t.~$[a,b]$), we have
		\begin{align}\label{mon}
			\Pr\left[\g_t(\cdot)\mid_{[a,b]} \ \in A \mid (\g_t(x))_{x\not\in (a,b)}\right] \le \Pr_{\operatorname{free}}^{(a,b),(\g_t(a),\g_t(b))}(A)
		\end{align}
		where $\Pr_{\operatorname{free}}^{(a,b),(y,z)}$ denotes the law of Brownian bridge on $[a,b]$ starting at $y$ and ending at $z$. Furthermore \eqref{mon} continues to hold if $(a,b)$ is a stopping domain for $\g_t(\cdot)$.
	\end{lemma}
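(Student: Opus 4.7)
The plan is to derive Lemma \ref{l:mon} from the $H$-Brownian Gibbs property of the KPZ line ensemble, followed by a Gaussian correlation (FKG) argument and a routine stopping-domain extension. The first thing to invoke is that, by \cite{CH16} in the long-time regime and Lemma~2.5(4) of \cite{dg} in the short-time regime, the process $\g_t(\cdot)$ extends to the top curve $L_1$ of a Brownian Gibbs line ensemble $\{L_k\}_{k\ge 1}$ with interaction $H(x) = e^{x}$. Conditioned on $L_1$ outside $(a,b)$ and on all lower curves $\{L_k\}_{k \ge 2}$, the restriction $L_1|_{[a,b]}$ is absolutely continuous with respect to the free Brownian bridge $\Pr_{\operatorname{free}}^{(a,b),(L_1(a),L_1(b))}$, with density proportional to the weight
\begin{equation*}
w(L) \;:=\; \exp\!\left(-\int_a^b H\bigl(L(s) - L_2(s)\bigr)\, ds\right) \;\in\; (0,1].
\end{equation*}

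To establish \eqref{mon} conditionally on $\{L_k\}_{k\ge 2}$, I would write
\begin{equation*}
\Pr\bigl(L_1|_{[a,b]} \in A \,\bigm|\, \text{exterior},\, \{L_k\}_{k \ge 2}\bigr) \;=\; \frac{\mathbb{E}_{\operatorname{free}}[\mathbf{1}_A \cdot w]}{\mathbb{E}_{\operatorname{free}}[w]},
\end{equation*}
and invoke the FKG/correlation inequality for Gaussian measures on $C([a,b])$ (e.g.~Pitt's theorem) applied to the Brownian bridge. Since $w$ is a monotone functional of $L$ in the pointwise partial order and the hypothesis on $A$ makes $\mathbf{1}_A$ a monotone functional of $L$, their covariance under the Brownian bridge has the sign needed to yield
\begin{equation*}
\mathbb{E}_{\operatorname{free}}[\mathbf{1}_A \cdot w] \;\le\; \mathbb{E}_{\operatorname{free}}[\mathbf{1}_A]\cdot\mathbb{E}_{\operatorname{free}}[w],
\end{equation*}
i.e. \eqref{mon} conditionally on the lower curves. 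Integrating out this conditioning via the tower property proves \eqref{mon} for deterministic intervals $(a,b)$.

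To extend \eqref{mon} to a stopping domain $(\mathfrak{a},\mathfrak{b})$, I would discretize along a dyadic grid. The event $\{\mathfrak{a} \in [a_i, a_i + 2^{-n}),\ \mathfrak{b} \in (b_j - 2^{-n}, b_j]\}$ is exterior-measurable for the inner interval $(a_i, b_j)$ by the defining property of a stopping domain, so the previous step applies on this event with the deterministic interval $(a_i, b_j)$. Summing over the countable grid and letting $n \to \infty$, using continuity of the free-bridge law in its endpoints together with Borel measurability of $A$, delivers \eqref{mon} for $(\mathfrak{a}, \mathfrak{b})$ by a monotone/dominated convergence argument.

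The technical heart of the argument is the FKG step, specifically ensuring that the correlation inequality applies with the correct sign for \emph{all} monotone events $A$, not just simple ones such as $\{\sup_{[a,b]} L > c\}$. The weight $w$ depends on the second curve $L_2$, which is itself a correlated process; however, once one conditions on $L_2$, $w$ becomes a deterministic monotone functional of $L$ and Pitt's Gaussian FKG inequality applies cleanly to the Brownian-bridge measure to produce the required covariance inequality. The Brownian-Gibbs input and the stopping-domain extension, while notationally heavy, are essentially standard bookkeeping in the line-ensemble literature.
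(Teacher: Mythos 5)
Your overall route --- realize $\g_t(\cdot)$ as the top curve of the $\mathbf{G}_t$-Brownian Gibbs line ensemble, write the conditional law on $[a,b]$ as a tilt of the free bridge, and compare via a correlation inequality --- is a legitimate alternative to the paper's argument, which instead uses the stochastic monotonicity coupling of \cite{CH16} to push the second curve down to $-\infty$ so that the Radon--Nikodym derivative in \eqref{am1} becomes constant and the conditional law is exactly the free bridge. However, as written your key step does not close, because you have the Gibbs interaction oriented backwards. In \eqref{am1} the weight seen by the top curve is
\begin{equation*}
w(L)\;=\;\exp\Big(-\int_a^b \mathbf{G}_t\big(\g_t^{(2)}(s)-L(s)\big)\,ds\Big),
\end{equation*}
i.e.\ the penalty is for the \emph{lower} curve exceeding the top curve, so $w$ is an \emph{increasing} functional of $L$; your $w(L)=\exp(-\int H(L-L_2))$ is decreasing in $L$. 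Note also that a monotone set $A$ in the sense of \eqref{tom} is a \emph{decreasing} set (membership passes to pointwise smaller paths with the same endpoints). With the correct orientation, FKG/positive association of the bridge gives $\Ex_{\operatorname{free}}[\ind_A\, w]\le \Ex_{\operatorname{free}}[\ind_A]\,\Ex_{\operatorname{free}}[w]$ and hence \eqref{mon}; but with your $w$, both $\ind_A$ and $w$ are decreasing, so the same correlation inequality yields the \emph{reverse} bound on the tilted probability. So the sign you assert "has the sign needed" is exactly the point that must be checked, and your formula gives the wrong one. Relatedly, Pitt's theorem is a finite-dimensional statement; to apply it to a general Borel monotone $A\subset C([a,b])$ and to $w$ (which is only defined after conditioning on $\g_t^{(2)}$) you need the standard approximation step (condition on finitely many interior points, use that the bridge's regression coefficients are nonnegative, and pass to the limit) --- routine, but it should be said; the paper's coupling argument avoids this entirely.

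Your stopping-domain step also has a concrete flaw: with your rounding, the approximating interval $(a_i,b_j)$ \emph{contains} $(\mathfrak{a},\mathfrak{b})$, and the event $\{\mathfrak{a}\in[a_i,a_i+2^{-n})\}$ is not measurable with respect to $\sigma\big((\g_t(x))_{x\notin(a_i,b_j)}\big)$, since deciding $\{\mathfrak{a}<a_i+2^{-n}\}$ may require the field inside $(a_i,a_i+2^{-n})$. The standard discretization rounds \emph{inward} (dyadic $a_i\ge\mathfrak{a}$, $b_j\le\mathfrak{b}$), for which $\{\mathfrak{a}\in(a_i-2^{-n},a_i],\,\mathfrak{b}\in[b_j,b_j+2^{-n})\}$ is exterior-measurable for $(a_i,b_j)$, and one then passes to the limit; alternatively, as the paper does, one can simply invoke the \emph{strong} $\mathbf{G}_t$-Brownian Gibbs property (Lemma \ref{line-ensemble}, from \cite{CH16}) and repeat the fixed-interval argument verbatim. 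With the interaction sign corrected and the stopping-domain step replaced by either of these, your FKG-based proof would go through, at the cost of the extra Gaussian-association machinery that the paper's monotone-coupling argument sidesteps.
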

	We will abuse our definition and call $\{\g_t(\cdot)\mid_{[a,b]} \ \in A \}$ to be monotone w.r.t.~$[a,b]$ if $A$ is monotone w.r.t.~$[a,b]$. The proof of the above lemma follows by utilizing the notion of the KPZ line ensemble and its Brownian Gibbs property \cite{CH16,dg}. We defer its proof and the necessary background on the KPZ line ensemble to Appendix \ref{app1}. }

\begin{proof}[Proof of Proposition \ref{p:stlf}] {Assume $s\ge 100$. For $s\le 100$, the constant $\Con>0$ can be adjusted so that the proposition holds trivially. We fix a $t_0\in (0,1)$ such that for all $s\ge 100$, and $t\le t_0$ we have
\begin{align}
    \label{onte} \tfrac14s\ge t^{1/4}(s+m_t(2))=t^{1/4}s+2(\pi/4)^{3/4}.
\end{align}
Let us first consider $t\in [t_0,1]$. We use the scalings from \eqref{eq:kpzscal} to get
\begin{align}
    \g_t(x)+m_t(x)=\tfrac{1}{\sqrt{r_t}}\left(\h_t(r_tx)+\tfrac{r_t^2x^2}{2}\right)+c_t,
\end{align}
where $r_t:=t^{-1/6}\sqrt{\pi/4}$ and $c_t:=(\pi t/4)^{-1/4}(\sqrt{2\pi t}-t/24)$. Take any $x\in \R$ and $\gamma\in (0,\sqrt{t})$. We have $r_t\gamma \le 1$. Setting $y:=r_tx$ and then applying Proposition \ref{p:ltkpzeq} \ref{p:ltlf} with $x\mapsto y$ and $\gamma\mapsto r_t\gamma$ we get
\begin{align}
    \mbox{l.h.s.~of \eqref{p24}} = 	\Pr\left(\sup_{z \in [y, y+r_t\gamma]}\left|\h_t(z)+ \tfrac{z^2}{2} - \h_t(y) - \tfrac{y^2}{2}\right|\ge  s\sqrt{\gamma r_t}\right) \le \Con\exp\left(-\tfrac1{\Con}s^{3/2}\right).
\end{align}
Let us now assume $t\le t_0$.} By Proposition \ref{p:stkpzeq} \ref{p:sstat}, we know that the process $\g_t(x)+m_t(x)$ is stationary in $x$. Thus it suffices to prove Proposition \ref{p:stlf} with $x=0$. Consider the following events
	$$\m{G}_{\gamma, s}: = \bigcap_{x \in \{\gamma-2, 0, \gamma, 2\}}\left\{-\tfrac{s}{4} \le \g_t(x) + m_t(x) \le \tfrac{s}{4}\right\},$$ 
	$$\m{Fall}_{\gamma, s}: = \bigg\{\inf_{z \in [0, \gamma]}(\g_t(z)+m_t(z))\le \g_t(0) -s\gamma^{1/2}\bigg\},$$
	$$\m{Rise}_{\gamma,s}:=\bigg\{\sup_{z\in [0, \gamma]}(\g_t(z) +m_t(z))\ge \g_t(0) + s\gamma^{1/2}\bigg\}.$$ 
	By one-point tail bounds from Proposition \ref{p:stkpzeq} \ref{p:stail} we have that $\Pr(\neg \m{G}_{\gamma,s})\le \Con\exp(-\frac1\Con s^{3/2})$. Thus, to show the proposition, it suffices to verify the following two bounds: 
	\begin{align}\label{e:bounds2}
		\Pr\left(\m{Fall}_{\gamma, s}, \m{G}_{\gamma, s}\right)\le \Con\exp\left(-\tfrac1\Con s^{2}\right), \qquad  \Pr\left(\m{Rise}_{\gamma, s}, \m{G}_{\gamma, s}\right)\le \Con\exp\left(-\tfrac1\Con s^{2}\right).
	\end{align}
	We begin with the $\m{Fall}_{\gamma,s}$ bound in \eqref{e:bounds2}. Clearly $\m{Fall}_{\gamma,s}$ event is monotone w.r.t.~$[0,2]$, by Lemma \ref{l:mon} we have
	$$\Pr\left(\m{Fall}_{\gamma, s}\mid (\g_t(x))_{x\in (0,2)}\right)\le \Pr_{\operatorname{free}}^{(0,2),(\g_t(0),\g_t(2))}\left({\m{Fall}}_{\gamma, s}\right)$$
	where $\Pr_{\operatorname{free}}^{(a,b),(y,z)}$ denotes the law of Brownian bridge on $[a,b]$ starting at $y$ and ending at $z$.  Using this we have 
	\begin{align}\label{e:fall2}
		\Pr\left(\m{Fall}_{\gamma, s}, \m{G}_{\gamma, s}\right)&\le \Pr\left(\m{Fall}_{\gamma, s}, \g_t(0)\le \tfrac{s}4, \g_t(2)+m_t(2)\ge -\tfrac{s}{4}\right)\notag \\ &\le  \Ex\left[\ind_{\g_t(0) \le \frac{s}4}\cdot \ind_{\g_t(2)+m_t(2)\ge -\frac{s}{4}}\Pr_{\m{free}}^{(0,2), (\g_t(0), \g_t(2))}\left({\m{Fall}}_{\gamma, s}\right)\right]\notag \\& \le \sup\bigg\{\Pr_{\m{free}}^{(0,2),y,z}\left({\m{Fall}}_{\gamma, s}\right) : y \le \tfrac{s}4, z+m_t(2) \ge -\tfrac{s}{4}\bigg\} \notag \\ & =\Pr_{\m{free}}^{(0,2),s/4,-s/4-m_t(2)}\left({\m{Fall}}_{\gamma, s} \right).
	\end{align}
	Next, we write the final term in \eqref{e:fall2} as $$\Pr_{\m{free}}^{(0,2),s/4,-s/4-m_t(2)}\left({\m{Fall}}_{\gamma, s}\right) = \Pr\left(\inf_{z \in [0, \gamma]}\bigg\{B'(z) +m_t(z) \bigg\}\le -s\gamma^{1/2}\right)$$ where $B':[0,2]\rightarrow \R$ is a Brownian bridge with $B'(0) = 0$ and $B'(2) = -m_t(2)- \frac{s}{2}.$ Now, set $B(z): = B'(z) - \frac{z}{2}(-m_t(2)-\frac{s}{2}).$ Then $B$ is a Brownian bridge with $B(0) = B(2) = 0$ and we obtain 
	\begin{align}
		\Pr\left(\inf_{z\in [0, \gamma]} (B'(z) + m_t(z))\le - s\gamma^{1/2}\right) &\le \Pr\left(\inf_{z\in [0, \gamma]} B(z) \le - s\gamma^{1/2}- \tfrac{\gamma}{2}(-m_t(2)- \tfrac{s}{2})\right) \notag \\&\le \Pr\left(\inf_{z\in [0,\gamma]} B(z) \le - \tfrac{s}{2}\gamma^{1/2}\right).
	\end{align}
	The latter inequality is due to $\gamma^{1/2}(m_t(2)+\tfrac{s}{2}) \le s$ as $s\ge 100$ and $\gamma \le \sqrt{t}$. The right-hand probability can be estimated via Brownian calculations, which yields the desired bound of the form $\Con\exp(-\frac1{\Con} s^2)$.
	
	\medskip
	
	\noindent We next prove the $\m{Rise}_{\gamma,s}$ bound in \eqref{e:bounds2}. Note that $\sup_{z\in [0,\gamma]} m_t(z) \le \frac{\gamma^2}{t^{1/4}} \le \frac12s\gamma^{1/2}$ (as $\gamma\le \sqrt{t}\le 1$ and $s\ge 4$). Thus it suffices to show
	\begin{align}\label{tsh}
		\Pr\left(\m{Rise}_{\gamma,s}^{(1)},\m{G}_{\gamma,s}\right)\le \Con\exp\left(-\tfrac1\Con s^2\right), \quad \m{Rise}_{\gamma,s}^{(1)}:=\bigg\{\sup_{z\in [0, \gamma]}\g_t(z)\ge \g_t(0) + \tfrac12s\sqrt\gamma\bigg\}.
	\end{align}
	Set $$\chi:= \inf\bigg\{ x\in (0, \gamma] \mid \g_t(x)- \g_t(0) \ge \tfrac12 s\gamma^{1/2} \bigg\},$$
	and set $\chi=\infty$ if  no such points exist. Then we have $\Pr\left(\m{Rise}^{(1)}_{\gamma, s}, \m{G}_{\gamma, s}\right) = \Pr\left(\chi \le \gamma, \m{G}_{\gamma,s}\right)$ and we can write the right-hand probability as \begin{align}\label{eq:rise2}
		\Pr\left(\chi \le \gamma, \m{G}_{\gamma,s},\g_t(\chi)- \g_t(\gamma)<\tfrac14s\sqrt{\gamma}\right)+ \Pr\left(\chi \le \gamma, \m{G}_{\gamma,s},\g_t(\chi)- \g_t(\gamma)\ge\tfrac14s\sqrt{\gamma} \right).
	\end{align} 
	On the event $\{\chi \le \gamma, \m{G}_{\gamma,s},\g_t(\chi)- \g_t(\gamma)<\tfrac14s\sqrt{\gamma}\}$ we have that $\{\g_t(\gamma) - \g_t(0) \ge \frac14s\sqrt\gamma\}$ holds as the continuity of $\g_t(\cdot)$ implies that $\g_t(\chi)=\g_t(0)+\frac12s\sqrt{\gamma}$ on $\{\chi\le \gamma\}$ event. Now with the same argument of the $\m{Fall}_{\gamma,s}$ event, we bound the probability of this occurrence by $\Con\exp(-\frac1\Con s^2)$ for some constant $\Con>0$. This is why $\m{G}_{\gamma, s}$ involves $\g_t(-2 + \gamma)$ and $\g_t(\gamma).$ The parabolic term $m_t(z)$ again can be ignored as $\sup_{z\in [0,\gamma]} m_t(z) \le \gamma^{3/2} \le \frac18 s\gamma^{1/2}$ for $s\ge 8$.

	Let us focus on the second term in \eqref{eq:rise2}. Note that $(\chi,2)$ is a stopping domain and $\{\g_t(\chi)-\g_t(\gamma) \ge \frac14s\sqrt{\gamma}\}$ is a monotone event w.r.t.~$[\chi,2]$. Applying Lemma \ref{l:mon} one has
	\begin{align*}
		\Pr\left(\g_t(\chi)- \g_t(\gamma)\ge\tfrac14s\sqrt{\gamma} \mid (\g_t(x))_{x\notin (\chi,2)}\right) \le \Pr_{\operatorname{free}}^{(\chi,2),(\g_t(\chi),\g_t(2))}\left(\g_t(\chi)- \g_t(\gamma)\ge\tfrac14s\sqrt{\gamma} \right).
	\end{align*}	
	Note that on $\{\chi\le \gamma, \m{G}_{\gamma, s}\}$  we have
	{\begin{align}
	    |\g_t(\chi)-\g_t(2)| =|\g_t(0)+\tfrac12s\sqrt{\gamma}-\g_t(2)| \le s/4+\tfrac12s\sqrt{\gamma} +m_t(2)+s/4=s+m_t(2).
	\end{align}
	As $2-\chi \ge 1$ on $\{\chi \le \gamma\}$, we thus get that the absolute value of the slope of the linearly interpolated line joining $(\chi,\g_t(\chi))$ and $(2,\g_t(2))$ is at most $s+m_t(2)$. Note that $\frac14s\sqrt{\gamma} \ge \gamma(s+m_t(2))$ due to \eqref{onte}. Thus the event $\{\g_t(\chi)-\g_t(\gamma) \ge \frac14s\sqrt{\gamma}\}$ entails that the $\g_t(\gamma)$ lies below the linearly interpolated line.} Under Brownian law, this has probability $1/2$.	Thus,
	\begin{align*}
		&\Pr\left(\chi \le \gamma, \m{G}_{\gamma,s},\g_t(\chi)-\g_t(\gamma)\ge \tfrac14s\sqrt\gamma \right)\\ & \le \Ex\left[\ind_{\chi\le \gamma, \m{G}_{\gamma, s}}\Pr_{\operatorname{free}}^{(\chi,2),(\g_t(\chi),\g_t(2))}\left(\g_t(\chi)- \g_t(\gamma)\ge\tfrac14s\sqrt{\gamma} \right)\right] \le \tfrac{1}{2}\Ex\left[\ind_{\chi \le \gamma, \m{G}_{\gamma, s}}\right].
	\end{align*}
	Hence we have shown that $\Pr\left(\chi \le \gamma, \m{G}_{\gamma, s}\right)\le \Con\exp(-\frac1\Con s^2) + \frac{1}{2}\Pr\left(\chi \le \gamma, \m{G}_{\gamma, s}\right)$ which implies that $\Pr\left(\chi \le \gamma, \m{G}_{\gamma, s}\right) \le 2\Con\exp(-\frac1\Con s^2)$ which gives us the bound in \eqref{tsh}, completing the proof.
\end{proof}

\section{Modulus of Continuity for rescaled $\cdrp$ measures}\label{sec:moc}

The main goal of this section is to establish quenched modulus of continuity estimates: Proposition \ref{p:qmc} and Proposition \ref{p:qmc.ptl}, for $\cdrp$ measures under long-time scalings. The proof of these propositions requires detailed study of the tail probabilities of two-point difference when scaled according to long-time. This is conducted in Proposition \ref{p:ltd} and Proposition \ref{p:ltd.ptl} respectively. One of the key technical inputs in the proofs of Propositions \ref{p:ltd} and \ref{p:ltd.ptl} is a parabolic decay estimate of a multivariate spatial process involving several long-time KPZ sheets. This estimate appears in Lemma \ref{l:ltail} and is proved in Section \ref{sec3.1}. In the following text, we first state those Propositions \ref{p:ltd} and \ref{p:ltd.ptl} and assuming their validity, we state and prove the modulus of continuity estimates. Proofs of Proposition \ref{p:ltd} and \ref{p:ltd.ptl} are deferred to Section \ref{sec:twopt}.   

\begin{proposition}[Long-time two-point difference] \label{p:ltd} Fix any $\e\in (0,1]$, $\delta\in (0,\frac12)$, and $\tau\ge 1$. Take $x\in [-\tau\e^{-\frac23},\tau\e^{-\frac23}]$. Let $V \sim \cdrp(0,0;x,\e^{-1})$. For $t\in [0,1]$, set $L_t^{(\e)}:=\e^{\frac23}V({\e^{-1}t})$. There exist two absolute constants $\Con_1(\tau,\delta)>0$ and $\Con_2(\tau,\delta)>0$ such that for all $m\ge 1$ and $t\neq s\in [0,1]$ we have
	\begin{align*}
		\Pr\left[\Pr^{\xi}(|L_s^{(\e)}-L_t^{(\e)}|\ge m|s-t|^{\frac12-\delta}) \ge \Con_1\exp(-\tfrac1{\Con_1}m^2)\right] \le \Con_2\exp\left(-\tfrac1{\Con_2}m^3\right).
	\end{align*}
\end{proposition}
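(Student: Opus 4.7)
The strategy is to interpret the quenched probability as a ratio of integrals of $\exp[\Lambda_{(s,t);\e}]$ and extract a Gaussian-type tail by applying Lemma \ref{l:ltail} on the numerator and matching it with a lower bound on the partition function.

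By Definition \ref{def:cdrp} and the Chapman-Kolmogorov identity \eqref{eq:chapman}, the joint quenched density of $(L_s^{(\e)},L_t^{(\e)})$ at $(u,v)$ is proportional to $\exp[\Lambda_{(s,t);\e}(u,v)]$, where $\Lambda_{(s,t);\e}$ is the obvious endpoint-$x$ analog of the functional defined in the paper's introduction, with associated parabola
\[
\tilde{P}(u,v) := \frac{u^2}{2s}+\frac{(v-u)^2}{2(t-s)}+\frac{(v-x\e^{2/3})^2}{2(1-t)}.
\]
Setting $r=m|t-s|^{1/2-\delta}$, I would then write
\[
\Pr^{\xi}\!\left(|L_s^{(\e)}-L_t^{(\e)}|\ge r\right) = \frac{\int\!\int_{|u-v|\ge r}\exp[\Lambda_{(s,t);\e}(u,v)]\,du\,dv}{\int\!\int \exp[\Lambda_{(s,t);\e}(u',v')]\,du'\,dv'}.
\]

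For the numerator, I would apply the endpoint-$x$ version of Lemma \ref{l:ltail} with $M=c_1m^2$ and small $\nu>0$: this provides, on an annealed event of probability at least $1-\Con\exp(-c_1^2m^4/\Con)$, the uniform bound $\e^{1/3}\Lambda_{(s,t);\e}(u,v)\le c_1m^2-(1-\nu)\tilde{P}(u,v)$. For the denominator, I would restrict integration to a unit box $B$ around the Brownian interpolation point $(sx\e^{2/3},tx\e^{2/3})$ (which is $O(1)$ since $|x|\le \tau\e^{-2/3}$) and produce a matching lower bound by (i) applying the one-point tail of Proposition \ref{p:ltkpzeq} \ref{p:ltail} at level $\sim m^2$ to each of the three KPZ summands in $\Lambda_{(s,t);\e}$ as well as to the normalizer $\calH(0,0;x,1/\e)$, and (ii) invoking the local modulus estimate of Proposition \ref{p:ltkpzeq} \ref{p:ltlf} to extend these point estimates across $B$. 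This yields $\int\!\int_B \exp[\Lambda_{(s,t);\e}]\,du\,dv\ge |B|\exp(-c_2 m^2\e^{-1/3})$ on an event of probability at least $1-\Con\exp(-m^3/\Con)$, with the $\exp(-m^3)$ rate coming from the $s^{3/2}$ one-point tail applied at level $s=c_2m^2$.

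Plugging both bounds into the ratio and performing the standard Gaussian tail computation for the marginal of $v-u$ under the measure with density $\propto \exp(-(1-\nu)\tilde{P}(u,v)\e^{-1/3})$—whose variance in this marginal is $(t-s)(1-(t-s))\e^{1/3}/(1-\nu)$—produces
\[
\Pr^{\xi}\!\left(|L_s^{(\e)}-L_t^{(\e)}|\ge r\right)\le \Con\,\e^{1/3}\exp\!\left(\e^{-1/3}m^2\left[(c_1+c_2)-\frac{1-\nu}{2|t-s|^{2\delta}}\right]\right).
\]
Since $|t-s|^{2\delta}\le 1$ and $\e\le 1$, choosing $c_1,c_2,\nu$ small enough so that the bracketed constant is at most some $-c_0<0$ gives the claimed quenched bound $\Con\exp(-c_0 m^2)$. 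Aggregating probability costs, the dominant rate $\exp(-m^3/\Con)$ from the lower-bound step matches the annealed rate in the statement.

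The main technical obstacle will be the lower bound on the partition function. Pointwise lower tails of $\calH$ only control individual values, so the quantitative short-range modulus of continuity of Proposition \ref{p:ltkpzeq} \ref{p:ltlf} must be deployed carefully to spread the point estimate over the box $B$ without losing the $\exp(-m^3)$ rate. An additional subtlety arises in the degenerate regimes where any of $s$, $t-s$, $1-t$ is close to $0$: the characteristic scale of $(u,v)$ then shrinks below unity, and either the box size must be adjusted (e.g.\ to order $\sqrt{s(t-s)(1-t)}\,\e^{1/6}$) or one exploits the pinning of $V$ at its endpoints to treat these cases separately.
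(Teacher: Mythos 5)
Your treatment of the numerator is exactly the paper's: apply the multivariate tail bound of Lemma \ref{l:ltail} (with the endpoint coordinate included, so the "endpoint-$x$ version" is already contained in the statement, since the supremum runs over all of $\R^k$) to get a uniform Gaussian envelope, then do the Gaussian tail computation in $v-u$. The place where your proposal diverges -- and where it is left genuinely incomplete -- is the denominator. You yourself identify the lower bound on the partition function as "the main technical obstacle," and your sketch does not resolve it. But this obstacle is illusory: by the Chapman--Kolmogorov identity \eqref{eq:chapman}, the full-space integral $\iint \calZ(0,0;u,\e^{-1}s)\calZ(u,\e^{-1}s;v,\e^{-1}t)\calZ(v,\e^{-1}t;x,\e^{-1})\,du\,dv$ collapses to the single value $\calZ(0,0;x,\e^{-1})$. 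Since $|x\e^{2/3}|\le\tau$, a one-point lower-tail bound (stationarity plus Proposition \ref{p:ltkpzeq} \ref{p:ltail}, at level $M\sim m^2$, costing probability $\Con\exp(-m^3/\Con)$) gives $\calZ(0,0;x,\e^{-1})\ge\exp(-\e^{-1/3}(M+\tfrac12\tau^2)-\tfrac{\e^{-1}}{24})$, and no box construction, no spreading of pointwise estimates, and no modulus-of-continuity input are needed at all. This is precisely how the paper proceeds, and it is why the constant $\tau$ enters the constants via the condition $m\ge 16\tau^2+1$.

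If you insist on the box route, the sketch as written does not go through with the tools you cite. When $\e^{-1}(t-s)\le 1$ the middle factor of $\Lambda_{(s,t);\e}$ is a \emph{short-time} KPZ sheet, so Proposition \ref{p:ltkpzeq} \ref{p:ltail}--\ref{p:ltlf} do not apply to it; you would need Proposition \ref{p:stkpzeq} and the new short-time modulus estimate Proposition \ref{p:stlf} (and even there only for $\gamma<\sqrt{t}$). Moreover a unit box in the $(u,v)$ variables corresponds to a spatial window of length $(t-s)^{-2/3}\gg1$ in the argument of $\h_{\e^{-1}(t-s)}$, which is outside the range $\gamma\in(0,1]$ of Proposition \ref{p:ltkpzeq} \ref{p:ltlf}; so the "spread the point estimate over $B$" step fails exactly in the degenerate regimes you flag, and your proposed remedies (shrinking the box, "exploiting the pinning") are not carried out -- one must also then track the resulting polynomial prefactors in $1/(t-s)$, $1/s$, $1/(1-t)$ against the Gaussian gain, which is the bookkeeping the paper does through the $B(\vec{t})$ terms and \eqref{use}. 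Finally, a small slip: Lemma \ref{l:ltail} at level $M=c_1m^2$ gives an exceptional probability $\Con\exp(-c_1^{3/2}m^3/\Con)$, not $\Con\exp(-c_1^2m^4/\Con)$; this is harmless since the statement only requires the $m^3$ rate, but the exponent $3/2$ (not $2$) is what the lemma provides.
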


We have the following point-to-line analogue.

\begin{customprop}{\ref*{p:ltd}-(point-to-line)}
	\label{p:ltd.ptl}
	Fix any $\e\in (0,1]$, $\delta\in (0,\frac12)$. Let $V \sim \cdrp(0,0;*,\e^{-1})$. For $t\in [0,1]$, set $L_{t,*}^{(\e)}:=\e^{\frac23}V({\e^{-1}t})$. There exist two absolute constants $\Con_1(\delta)>0$ and $\Con_2(\delta)>0$ such that for all $m\ge 1$ and $t\neq s\in [0,1]$ we have
	\begin{align*}
		\Pr\left[\Pr_*^{\xi}(|L_{s,*}^{(\e)}-L_{t,*}^{(\e)}|\ge m|s-t|^{\frac12-\delta}) \ge \Con_1\exp(-\tfrac1{\Con_1}m^2)\right] \le \Con_2\exp\left(-\tfrac1{\Con_2}m^3\right).
	\end{align*}
\end{customprop}

\begin{remark} In the above propositions, the quenched probability of the tail event of two-point difference of rescaled polymers is viewed as a random variable.  The above propositions provide quantitative decay estimates of this random variable being away from zero {for point-to-point and point-to-line polymers under long-time regime}.	  
\end{remark}

\begin{proposition}[Quenched Modulus of Continuity]  \label{p:qmc} Fix $\e\in (0,1]$, $\delta\in (0,\frac12)$ and $\tau\ge 1$. Take $y\in [-\tau\e^{-\frac23},\tau\e^{-\frac23}]$. Let $V\sim \cdrp(0,0;y,\e^{-1})$. Set $L_t^{(\e)}:=\e^{\frac23}V({\e^{-1}t})$ for $t\in [0,1]$. Then there exist two constants $\Con_1(\tau,\delta)>0$ and $\Con_2(\tau,\delta)>0$ such that for all $m\ge 1$ we have
	\begin{align}\label{e:lqmc}
		& \Pr\bigg[\Pr^{\xi}\bigg(\sup_{\substack{t\neq s\in [0,1]}} \frac{|L_s^{(\e)}-L_t^{(\e)}|}{|t-s|^{\frac12-\delta}\log\frac{2}{|t-s|}} \ge m\bigg) \ge \Con_1\exp\left(-\tfrac1{\Con_1}m^2\right)\bigg]  \le \Con_2\exp\left(-\tfrac1{\Con_2}m^3\right). 
	\end{align}
\end{proposition}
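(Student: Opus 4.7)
The plan is a standard dyadic chaining argument executed at the quenched level, with Proposition \ref{p:ltd} supplying the atomic two-point input. Let $D_n := \{k/2^n : 0 \le k \le 2^n\}$ and $D_\infty := \bigcup_n D_n$. By $\Pr^{\xi}$-a.s.\ continuity of $L^{(\e)}$, the supremum in \eqref{e:lqmc} may be evaluated over pairs in $D_\infty$, so it suffices to control adjacent dyadic increments at every level. I will apply Proposition \ref{p:ltd} with thresholds $M_n := c_1 m + c_2 \sqrt{n}$, where $c_1 \ge 1$ and $c_2$ are constants depending only on $\tau, \delta$ to be chosen below. For each $n \ge 1$ and $0 \le k < 2^n$, set
\begin{equation*}
\m{B}_{n,k} := \bigl\{|L_{k/2^n}^{(\e)} - L_{(k+1)/2^n}^{(\e)}| \ge M_n \, 2^{-n(1/2-\delta)}\bigr\}, \qquad \m{E}_{n,k} := \bigl\{\Pr^{\xi}(\m{B}_{n,k}) \ge K_1 e^{-M_n^2/K_1}\bigr\},
\end{equation*}
where $K_1, K_2$ are the constants produced by Proposition \ref{p:ltd}. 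That proposition gives $\Pr(\m{E}_{n,k}) \le K_2 e^{-M_n^3/K_2}$. Using $(a+b)^3 \ge a^3+b^3$ so that $M_n^3 \ge c_1^3 m^3 + c_2^3 n^{3/2}$ and a union bound over the $2^n$ adjacent pairs at each level,
\begin{equation*}
\Pr\Bigl(\bigcup_{n \ge 1}\bigcup_{k=0}^{2^n-1}\m{E}_{n,k}\Bigr) \le K_2\, e^{-c_1^3 m^3/K_2} \sum_{n \ge 1} 2^n e^{-c_2^3 n^{3/2}/K_2},
\end{equation*}
and the series converges for any $c_2 > 0$ since $n^{3/2}$ beats $n \log 2$. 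This is the outer $\Con_2 \exp(-m^3/\Con_2)$ bound in \eqref{e:lqmc}.

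Restrict now to the good event $\mathcal{G} := \bigcap_{n,k} \m{E}_{n,k}^c$. On $\mathcal{G}$, by definition of $\m{E}_{n,k}$ and a union bound under $\Pr^{\xi}$,
\begin{equation*}
\Pr^{\xi}\Bigl(\bigcup_{n \ge 1}\bigcup_{k=0}^{2^n-1}\m{B}_{n,k}\Bigr) \le \sum_{n \ge 1} 2^n K_1\, e^{-M_n^2/K_1} \le K_1\, e^{-c_1^2 m^2/K_1}\sum_{n \ge 1} 2^n e^{-c_2^2 n/K_1},
\end{equation*}
where the last series is finite provided $c_2^2 > K_1 \log 2$. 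On the path-space event $\bigcap_{n,k} \m{B}_{n,k}^c$ every adjacent dyadic increment at level $n$ is at most $M_n 2^{-n(1/2-\delta)}$, and standard chaining finishes the job. For distinct $s, t \in D_\infty$ with $2^{-(n_0+1)} \le |s-t| < 2^{-n_0}$, let $\pi_n(u) := \lfloor 2^n u \rfloor / 2^n$; then $(\pi_{n-1}(u), \pi_n(u))$ are equal or adjacent in $D_n$ for every $n > n_0$, and $(\pi_{n_0}(s), \pi_{n_0}(t))$ are equal or adjacent in $D_{n_0}$. Telescoping $L_s - L_t$ through these dyadic approximations and applying the per-level increment bound yields
\begin{equation*}
|L_s - L_t| \le 2\sum_{n \ge n_0} M_n \, 2^{-n(1/2-\delta)} \le C_\delta (m + \sqrt{n_0})\, 2^{-n_0(1/2-\delta)} \le C_\delta' \bigl(m + \sqrt{\log(2/|s-t|)}\bigr) |s-t|^{1/2-\delta},
\end{equation*}
after evaluating $\sum_{n \ge n_0}\sqrt{n}\, r^n \le C\sqrt{n_0}\,r^{n_0}$ for $r = 2^{-(1/2-\delta)}$ and using $n_0 \le 1 + \log_2(1/|s-t|)$. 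Dividing by $|s-t|^{1/2-\delta}\log(2/|s-t|)$ and using $|s-t| \le 1$ and $m \ge 1$ bounds the ratio uniformly by $C_\delta'' m$; absorbing $C_\delta''$ into $c_1$ by a rescaling $m \mapsto m/C_\delta''$ yields \eqref{e:lqmc}.

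The only genuinely delicate point is reconciling the two distinct rates in Proposition \ref{p:ltd}: the quenched tail $e^{-m^2}$ and the annealed tail $e^{-m^3}$. The ansatz $M_n = c_1 m + c_2 \sqrt{n}$ is tailored precisely for this, since $M_n^2 \ge c_2^2 n$ defeats the $2^n$ union cost at the quenched level (provided $c_2$ is chosen large enough), while $M_n^3 \ge c_2^3 n^{3/2}$ handles the same cost automatically at the annealed level. All remaining ingredients—dyadic telescoping, identification of $(\pi_n(s), \pi_{n-1}(s))$ as equal-or-adjacent pairs, geometric-type bounds on $\sum \sqrt{n}\, r^n$—are entirely routine, and the extra $\log(2/|s-t|)$ factor in the modulus denominator (stronger than the $\sqrt{\log}$ our chaining actually produces) gives comfortable slack to absorb all deterministic constants.
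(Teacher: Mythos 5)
Your proposal is correct and follows essentially the same route as the paper: a L\'evy-type dyadic chaining argument that feeds Proposition \ref{p:ltd} into union bounds at both the annealed level (beating the $2^n$ cost with the cubic tail) and the quenched level (beating it with the quadratic tail), followed by a deterministic telescoping estimate and absorption of the logarithmic factor. The only cosmetic difference is your additive level-dependent threshold $c_1 m + c_2\sqrt{n}$ in place of the paper's multiplicative $m(n+1)$, which changes nothing essential.
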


\begin{customprop}{\ref*{p:qmc}-(point-to-line)}\label{p:qmc.ptl} Fix $\e\in (0,1]$, $\delta\in (0,\frac12)$. Let $V\sim \cdrp(0,0;*,\e^{-1})$. For $t\in [0,1]$, set  $L_{t,*}^{(\e)}:=\e^{\frac23}V({\e^{-1}t})$. Then there exist two constants $\Con_1(\delta)>0$ and $\Con_2(\delta)>0$ such that for all $m\ge 1$ we have
	\begin{align*}
		& \Pr\bigg[\Pr_*^{\xi}\bigg(\sup_{\substack{t\neq s\in [0,1]}} \frac{|L_{s,*}^{(\e)}-L_{t,*}^{(\e)}|}{|t-s|^{\frac12-\delta}\log\frac{2}{|t-s|}} \ge m\bigg) \ge \Con_1\exp\left(-\tfrac1{\Con_1}m^2\right)\bigg]  \le \Con_2\exp\left(-\tfrac1{\Con_2}m^3\right). 
	\end{align*}
\end{customprop}

\begin{remark} The paths of continuum directed random polymer are known to be H\"older continuous with exponent $\gamma$, for every $\gamma<1/2$ (see \cite[Theorem 4.3]{akq}).  Our Theorem \ref{p:qmc} corroborates this fact by giving quantitative tail bounds to the quenched modulus of continuity.
\end{remark}

Before proving Propositions \ref{p:qmc} and \ref{p:qmc.ptl}, we present below a few important corollaries for point-to-point long-time polymer. Similar corollaries hold for point-to-line case as well.

\begin{corollary}\label{l:lqmcc1}
	Fix $\e\in (0,1]$, and $\tau\ge 1$. Take $x\in [-\tau\e^{-\frac23},\tau\e^{-\frac23}]$. Let $V\sim \cdrp(0,0;x,\e^{-1})$. For $t\in [0,1]$ set  $L_t^{(\e)}:=\e^{\frac23}V({\e^{-1}t})$. Then there exist two constants $\Con_1(\tau)>0$ and $\Con_2(\tau)>0$ such that for all $m\ge 1$ we have
	\begin{align}\label{e:lqmcc1}
		& \Pr\bigg[\Pr^{\xi}\bigg(\sup_{\substack{t\in [0,1]}}|L_t^{(\e)}| \ge m\bigg) \ge \Con_1\exp\left(-\tfrac1{\Con_1}m^2\right)\bigg]  \le \Con_2\exp\left(-\tfrac1{\Con_2}m^3\right). 
	\end{align}
\end{corollary}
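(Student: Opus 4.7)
The plan is to derive this supremum tail bound as a short consequence of the quenched modulus-of-continuity estimate already established in Proposition \ref{p:qmc}. The key elementary observation is that the rescaled polymer satisfies $L_0^{(\e)} = \e^{2/3} V(0) = 0$ by the starting-point condition in Definition \ref{def:cdrp}, so $|L_t^{(\e)}| = |L_t^{(\e)} - L_0^{(\e)}|$ for every $t \in [0,1]$. This means the supremum bound reduces to a H\"older-type statement anchored at $s = 0$, which Proposition \ref{p:qmc} already supplies in quenched form.

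To execute this, I would fix any $\delta \in (0, \tfrac12)$ (say $\delta = \tfrac14$) and set
\[
C_\delta := \sup_{t \in (0,1]} t^{1/2-\delta}\log(2/t),
\]
which is finite since $t^{1/2-\delta}\log(2/t)$ is continuous on $(0,1]$ and tends to $0$ as $t \downarrow 0$. Plugging $s = 0$ into the H\"older ratio appearing in Proposition \ref{p:qmc}, and using $L_0^{(\e)} = 0$, produces the deterministic inequality
\[
\sup_{t \in [0,1]} |L_t^{(\e)}| \;\le\; C_\delta \cdot \sup_{s \ne t \in [0,1]} \frac{|L_s^{(\e)} - L_t^{(\e)}|}{|t-s|^{1/2-\delta}\log(2/|t-s|)}.
\]
In particular, the event $\{\sup_{t \in [0,1]} |L_t^{(\e)}| \ge m\}$ is contained in the event that this H\"older ratio exceeds $m/C_\delta$.

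The proof is then completed by applying Proposition \ref{p:qmc} with the parameter $m$ replaced by $m/C_\delta$: this yields the two-layer tail bound with the $m^2$ and $m^3$ exponents, and absorbing the $C_\delta$-dependent factors into the new constants $\Con_1'(\tau), \Con_2'(\tau)$ (allowed because $\delta$ has been fixed once and for all) produces exactly \eqref{e:lqmcc1}. No genuine obstacle arises at this step: all of the serious work -- the multivariate parabolic decay in Lemma \ref{l:ltail}, the two-point difference estimate in Proposition \ref{p:ltd}, and the chaining-to-modulus-of-continuity argument -- has already been absorbed into Proposition \ref{p:qmc}, so the corollary is a one-step reduction via the anchor $L_0^{(\e)} = 0$.
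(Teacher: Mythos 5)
Your proposal is correct and follows essentially the same route as the paper: anchor at $s=0$ using $L_0^{(\e)}=0$, bound $\sup_t|L_t^{(\e)}|$ by the H\"older ratio times $\sup_{t\in(0,1]}t^{1/4}\log\tfrac{2}{t}$ (the paper calls this quantity $\rho$, with a harmless $+1$), and apply Proposition \ref{p:qmc} with $m$ replaced by $m$ divided by that constant, absorbing it into $\Con_1,\Con_2$.
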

\begin{proof}
	Set $s = 0$ and $\rho=1+\sup_{t\in (0,1]} t^{1/4}\log \frac2t \in (1,\infty)$. By Proposition \ref{p:qmc}, with $\delta=\frac14$ there exist $\Con_1(\tau)$ and $\Con_2(\tau)$ such that for all $m \ge 1$, \eqref{e:lqmc} holds with $s = 0$. Replacing $m$ with $m/\rho$ in \eqref{e:lqmc} yields that 
	\begin{align*}
		&\Pr\bigg[\Pr^{\xi}\bigg(\sup_{\substack{t\in [0,1]}}|L_t^{(\e)}|\ge m\bigg) \ge \Con_1\exp\left(-\tfrac1{\Con_1}m^2\right)\bigg] \\ & \le \Pr\bigg[\Pr^{\xi}\bigg(\sup_{\substack{t\in [0,1]}} \frac{|L_t^{(\e)}|}{t^{\frac14}\log\frac2t} \ge \frac{m}{\rho}\bigg) \ge \Con_1\exp\left(-\tfrac1{\Con_1}(\tfrac{m}{\rho})^2\right)\bigg]  \le \Con_2\exp\left(-\tfrac1{\Con_2}(\tfrac{m}{\rho})^3\right). 
	\end{align*}
	Adjusting $\Con_2$ further we get the desired result.
\end{proof}
From Proposition \ref{p:qmc}, we also obtain the annealed modulus of continuity. 
\begin{corollary}[Annealed Modulus of Continuity]  \label{p:amc} Fix $\e\in (0,1]$, $\delta\in (0,\frac12)$ and $\tau\ge 1$. Take $y\in [-\tau\e^{-\frac23},\tau\e^{-\frac23}]$. Let  $V\sim \cdrp(0,0;y,\e^{-1})$. Set $L_t^{(\e)}:=\e^{\frac23}V({\e^{-1}t})$ for $t\in [0,1]$. Then there exists a constant $\Con(\tau,\delta)>0$ such that for all $m\ge 1$ we have
	\begin{align}\label{e:lamc}
		& \Pr\bigg(\sup_{\substack{t\neq s\in [0,1]}} \frac{|L_s^{(\e)}-L_t^{(\e)}|}{|t-s|^{\frac12-\delta}\log\frac{2}{|t-s|}} \ge m\bigg) \le \Con\exp\left(-\tfrac1{\Con}m^2\right). 
	\end{align}
\end{corollary}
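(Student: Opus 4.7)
The plan is to derive this annealed modulus of continuity as an immediate consequence of the quenched estimate in Proposition \ref{p:qmc}, via the standard decomposition of the expectation of a quenched probability into a ``good noise'' and ``bad noise'' contribution. Writing
\[
\m{A}_m := \Big\{\sup_{s\neq t\in [0,1]} \tfrac{|L_s^{(\e)}-L_t^{(\e)}|}{|t-s|^{\frac12-\delta}\log\frac{2}{|t-s|}} \ge m\Big\},\qquad Q_m := \Pr^\xi(\m{A}_m),
\]
the annealed probability of interest equals $\Ex[Q_m]$, so it suffices to bound this expectation.

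First, I would apply Proposition \ref{p:qmc} to produce constants $\Con_1 = \Con_1(\tau,\delta)$ and $\Con_2 = \Con_2(\tau,\delta)$ such that the bad-noise event $\m{B}_m := \{Q_m \ge \Con_1 \exp(-\tfrac{1}{\Con_1} m^2)\}$ satisfies $\Pr(\m{B}_m) \le \Con_2 \exp(-\tfrac{1}{\Con_2} m^3)$. Then I would split
\[
\Ex[Q_m] = \Ex[Q_m \ind_{\m{B}_m}] + \Ex[Q_m \ind_{\neg\m{B}_m}] \le \Pr(\m{B}_m) + \Con_1 \exp\bigl(-\tfrac{1}{\Con_1}m^2\bigr),
\]
using the trivial bound $Q_m \le 1$ on $\m{B}_m$ and the pointwise estimate $Q_m \le \Con_1\exp(-m^2/\Con_1)$ on $\neg\m{B}_m$.

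Since $m \ge 1$ implies $m^3 \ge m^2$, the cubic tail $\Con_2 \exp(-m^3/\Con_2)$ is dominated by $\Con_2 \exp(-m^2/\Con_2)$, and absorbing both prefactors into a single constant $\Con(\tau,\delta)$ yields the required bound $\Con \exp(-m^2/\Con)$. There is no real obstacle here: all the technical content sits in Proposition \ref{p:qmc}, and this corollary is just the routine passage from a quenched to an annealed tail. The Gaussian-type rate $\exp(-m^2/\Con)$ appearing in the conclusion is, as one should expect, the weaker of the two tails in the quenched statement, the cubic tail having been completely absorbed via the $Q_m \le 1$ bound on the negligible event $\m{B}_m$.
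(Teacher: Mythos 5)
Your argument is correct and is precisely the routine quenched-to-annealed passage the paper has in mind when it states Corollary \ref{p:amc} as an immediate consequence of Proposition \ref{p:qmc} without further proof: write the annealed probability as $\Ex[\Pr^\xi(\cdot)]$, bound the quenched probability by $1$ on the bad-noise event of Proposition \ref{p:qmc} and by $\Con_1\exp(-m^2/\Con_1)$ on its complement, and absorb the cubic tail into the Gaussian one using $m\ge 1$. No gaps.
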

Clearly one has similar corollaries for the point-to-line version which follow from Proposition \ref{p:qmc.ptl} instead. For brevity, we do not record them separately. We now assume Proposition \ref{p:ltd} (Proposition \ref{p:ltd.ptl}) and complete the proof of Proposition \ref{p:qmc} (Proposition \ref{p:qmc.ptl}). 

\begin{proof}[Proof of Propositions \ref{p:qmc} and \ref{p:qmc.ptl}] Fix $\tau\ge 1$ and $m\ge 16\tau^2+1$. The main idea is to mimic Levy's proof of modulus of continuity of Brownian motion. Since our proposition deals with quenched versions, we keep the proof here for the sake of completeness. We only prove \eqref{e:lqmc} using Proposition \ref{p:ltd}. Proof of Proposition \ref{p:qmc.ptl} follows in a similar manner using Proposition \ref{p:ltd.ptl}. To prove \eqref{e:lqmc}, we first control the modulus of continuity on dyadic points of $[0,1]$. Fix $\delta>0$ and set $\gamma=\frac12-\delta$. Define
	\begin{align*}
		\norm{L^{(\e)}}_n:=\sup_{k=\{1,\ldots,2^n\}} \left|L_{k2^{-n}}^{(\e)}-L_{(k-1)2^{-n}}^{(\e)}\right|, \quad \norm{L^{(\e)}}:=\sup_{n\ge 0} \frac{\norm{L^{(\e)}}_n 2^{n\gamma}}{n+1}.
	\end{align*}
	Observe that by union bound
	\begin{align*}
		\Pr^{{\xi}}\left(\norm{L^{(\e)}} \ge m\right) \le \sum_{n=0}^{\infty}\sum_{k=1}^{2^n}\Pr^{\xi}\left(\left|L_{k2^{-n}}^{(\e)}-L_{(k-1)2^{-n}}^{(\e)}\right| \ge m 2^{-n\gamma}(n+1)\right).
	\end{align*}
	Thus in light of Proposition \ref{p:ltd} we see that with probability {at least}
	$$1-\sum_{n=0}^{\infty}\Con_2 2^n\exp\left(-\tfrac1{\Con_2}m^3(n+1)^3\right) \ge 1-\Con_2'\exp\left(-\tfrac1{\Con_2'}m^3\right)$$
	we have
	\begin{align*}
		\Pr^{\xi}\left(\norm{L^{(\e)}} \ge m\right) \le \sum_{n=0}^{\infty}\Con_1 2^n \exp\left(-\tfrac1\Con_1m^2(n+1)^2\right) \le \Con_1' \exp\left(-\tfrac1{\Con_1'}m^2\right) .
	\end{align*}
	Finally one can extend the results to all points by continuity of $L^{(\e)}$ and observing the following string of inequalities that holds deterministically.	For any $0\le s < t\le 1$ we have
	\begin{align}\label{arekt}
		|L_t^{(\e)}-L_s^{(\e)}| & \le \sum_{n=1}^{\infty}  \left|L^{(\e)}_{2^{-n}\lfloor {2^nt}\rfloor}-L^{(\e)}_{2^{-n+1}\lfloor {2^{n-1}t}\rfloor}-L^{(\e)}_{2^{-n}\lfloor {2^ns}\rfloor}+L^{(\e)}_{2^{-n+1}\lfloor {2^{n-1}s}\rfloor}\right|. 
	\end{align}
	{Note that we have
	\begin{align*}
	    & \left|L^{(\e)}_{2^{-n}\lfloor {2^nt}\rfloor}-L^{(\e)}_{2^{-n+1}\lfloor {2^{n-1}t}\rfloor}-L^{(\e)}_{2^{-n}\lfloor {2^ns}\rfloor}+L^{(\e)}_{2^{-n+1}\lfloor {2^{n-1}s}\rfloor}\right| \\ & \le \left|L^{(\e)}_{2^{-n}\lfloor {2^nt}\rfloor}-L^{(\e)}_{2^{-n+1}\lfloor {2^{n-1}t}\rfloor}\right|+\left|L^{(\e)}_{2^{-n}\lfloor {2^ns}\rfloor}-L^{(\e)}_{2^{-n+1}\lfloor {2^{n-1}s}\rfloor}\right| \le 2\norm{L^{(\e)}}_n,
	\end{align*}
	and
	\begin{align*}
	    & \left|L^{(\e)}_{2^{-n}\lfloor {2^nt}\rfloor}-L^{(\e)}_{2^{-n+1}\lfloor {2^{n-1}t}\rfloor}-L^{(\e)}_{2^{-n}\lfloor {2^ns}\rfloor}+L^{(\e)}_{2^{-n+1}\lfloor {2^{n-1}s}\rfloor}\right| \\ & \le \left|L^{(\e)}_{2^{-n}\lfloor {2^nt}\rfloor}-L^{(\e)}_{2^{-n}\lfloor {2^ns}\rfloor}\right|+\left|L^{(\e)}_{2^{-n+1}\lfloor {2^{n-1}t}\rfloor}-L^{(\e)}_{2^{-n+1}\lfloor {2^{n-1}s}\rfloor}\right| \le 2(t-s)2^n\norm{L^{(\e)}}_n.
	\end{align*}
	Combining the above two inequalities we get}
	\begin{align*}
	    	\mbox{r.h.s.~of \eqref{arekt}} & \le \sum_{n=1}^{\infty} 2\left({|t-s|}2^n\wedge 2\right)\norm{L^{(\e)}}_n \\ & \le \norm{L^{(\e)}}\sum_{n=1}^{\infty} (n+1)2^{-n\gamma}\left({|t-s|}2^n\wedge 2\right) \le c_2\norm{L^{(\e)}} \cdot |t-s|^{\gamma}\log \tfrac{2}{|t-s|}.
	\end{align*}

	where $c_2>0$ is an absolute constant. Combining this with the bound for $\Pr^{\xi}(\norm{L^{(\e)}}\ge m)$, completes the proof. 
\end{proof}

\subsection{Tail bounds for multivariate spatial process} \label{sec3.1} {Recall the KPZ sheet $\mathcal{H}(\cdot,\cdot;\cdot,\cdot)$ defined in \eqref{calhd}}. The core idea behind the proof of Propositions \ref{p:ltd} and \ref{p:ltd.ptl} is to establish parabolic decay estimates of sum of several KPZ sheets scaled according to long-time. We record this parabolic decay estimate in the following Lemma \ref{l:ltail}.

\begin{lemma}[Long-time multivariate spatial process tail bound]\label{l:ltail} Fix any $k \in \Z_{>0}$ and $\nu\in (0,1)$. Set $x_0=0$, and $\vec{x}:=(x_1,\ldots,x_k)$. For any $\e\in (0,1)$ consider $0 = t_0 < t_1 < \dots < t_k = 1$. Set $\vec{t}:=(t_1,\ldots,t_k)$. Then there exists a constant $\Con = \Con(k,\nu) $ such that for all $s > 0$ we have
	\begin{align}\label{eq:ltail}
		\Pr\left(\sup_{\vec{x} \in \R^k} \left[F_{\vec{t};\e}(\vec{x})+\sum_{i=0}^{k-1} \frac{(1-\nu)(x_{i+1}-x_i)^2}{2(t_{i+1}-t_i)}\right] \ge s\right)\le \Con \exp\left(-\tfrac{1}{\Con}s^{3/2}\right).
	\end{align}
	where
	\begin{equation}
		\begin{aligned}\label{def:f}
			F_{\vec{t};\e}(\vec{x}) & :  = \e^{1/3}\sum_{i=0}^{k-1} \left[ \calH({x_i\e^{-2/3},\e^{-1}t_i;x_{i+1}\e^{-2/3},\e^{-1}t_{i+1}})+\tfrac{\e^{-1}(t_{i+1}-t_i)}{24}\right. \\ & \hspace{5cm}\left.+\ind\{t_{i+1}-t_i\le \e\} \cdot \log \sqrt{2\pi \e^{-1}(t_{i+1}-t_i)}\right]. 
		\end{aligned}   
	\end{equation}
\end{lemma}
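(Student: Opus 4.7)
The plan is to rescale each summand of $F_{\vec{t};\e}(\vec{x})$ into a value of the long-time sheet $\h_{\tau_i}$ (in the regime $\tau_i := \e^{-1}(t_{i+1}-t_i) \ge 1$) or the short-time sheet $\g_{\tau_i}$ (in the regime $\tau_i<1$), then use a fraction $\nu$ of the parabolic penalty to control the supremum over $\vec{x}\in\R^k$ via a discrete grid argument. Setting $\delta_i := t_{i+1}-t_i$ and $w_i := (x_{i+1}-x_i)\delta_i^{-2/3}$, the scalings in \eqref{eq:kpzscal} combined with Remark \ref{re:gscal} give, for each $i$ and as a process in $x_i$ for fixed $x_{i+1}$, a clean identification of the $i$-th summand of $F_{\vec{t};\e}$ with $\delta_i^{1/3}\h_{\tau_i}(w_i)$ in the long-time case, and with $\e^{1/12}(\pi\delta_i/4)^{1/4}\g_{\tau_i}(\tilde w_i)+O(\e^{1/3})$ in the short-time case (where $\tilde w_i$ is the analogous short-time coordinate). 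The constants $\e^{-1}\delta_i/24$ and $\ind\{t_{i+1}-t_i\le\e\}\log\sqrt{2\pi\e^{-1}\delta_i}$ built into $F$ are tuned precisely so that this identification is exact up to $O(\e^{1/3})$ residues. Writing $P(\vec{x}):=\sum_{i=0}^{k-1}(x_{i+1}-x_i)^2/(2\delta_i)$, the parabolic sum $(1-\nu)P(\vec{x})$ combines with the $w_i^2/2$ (resp.~$m_{\tau_i}(w_i)$) factor from the stationary versions in Propositions \ref{p:ltkpzeq}\ref{p:lstat} and \ref{p:stkpzeq}\ref{p:sstat} to leave an explicit $-\nu P(\vec{x})$ penalty. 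Crucially, the rescaled summands are mutually independent because the underlying noises live on disjoint time intervals.

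Next I would partition $\R^k$ into unit cubes $\vec{n}+[0,1)^k$ indexed by $\vec{n}\in\Z^k$, and for each $\vec{n}$ bound
$$\Pr\Bigl(\sup_{\vec{x}\in \vec{n}+[0,1)^k}[F_{\vec{t};\e}(\vec{x})+(1-\nu)P(\vec{x})]\ge s\Bigr)\le \Pr\bigl(\mathsf{Pt}(\vec{n})\ge \tfrac{s}{2}\bigr)+\Pr\bigl(\mathsf{Osc}(\vec{n})\ge \tfrac{s}{2}\bigr),$$
with $\mathsf{Pt}(\vec{n}):=F_{\vec{t};\e}(\vec{n})+(1-\nu)P(\vec{n})$ and $\mathsf{Osc}(\vec{n})$ the local oscillation inside the cube. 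By the reduction above plus stationarity, $\mathsf{Pt}(\vec{n})+\nu P(\vec{n})$ has the law of a sum of $k$ independent random variables, each marginally equal to $\delta_i^{1/3}\h_{\tau_i}(0)$ (or the short-time analog). A union bound over the $k$ summands together with Propositions \ref{p:ltkpzeq}\ref{p:ltail} and \ref{p:stkpzeq}\ref{p:stail} yields $\Pr(\mathsf{Pt}(\vec{n})\ge s/2)\le \Con(k)\exp(-c_k(s+\nu P(\vec{n}))^{3/2})$. For $\mathsf{Osc}(\vec{n})$ I telescope $F_{\vec{t};\e}(\vec{x})-F_{\vec{t};\e}(\vec{n})$ through intermediate corners into one-dimensional increments of $\h_{\tau_i}$ or $\g_{\tau_i}$ along the $w_i$-direction; each unit cube in $\vec{x}$ covers a $w_i$-interval of length at most $\delta_i^{-2/3}$, which I sub-divide into unit-length pieces (long-time) or pieces of length $\sqrt{\tau_i}$ (short-time, as required by Proposition \ref{p:stlf}) and apply Propositions \ref{p:ltkpzeq}\ref{p:ltlf} and \ref{p:stlf} on each piece. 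The resulting polynomial-in-$\e^{-1}$ subdivision overhead is absorbed into the $\exp(-cs^{3/2})$ decay by strengthening the exponent constant. A final union bound over $\vec{n}\in\Z^k$, grouping by shells $\{P(\vec{n})\sim R^2\}$ containing $O(R^k)$ points, converges against the decay $\exp(-cR^3)$ and produces the target bound $\Con\exp(-\tfrac{1}{\Con}s^{3/2})$.

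The main obstacle is the multiscale mismatch between short-time and long-time regimes when some sub-intervals satisfy $\delta_i<\e$ and others $\delta_i\ge \e$: the scaling factors ($\delta_i^{1/3}$ vs.~$\e^{1/12}\delta_i^{1/4}$) differ, and Proposition \ref{p:stlf} only controls modulus on $w$-intervals of length $\le\sqrt{\tau_i}$, forcing finer discretization in short-time pieces. One must verify that the $\ind\{t_{i+1}-t_i\le\e\}\log\sqrt{2\pi\e^{-1}\delta_i}$ correction in $F$ exactly cancels the corresponding term in the definition of $\g_{\tau_i}$ so that no $\e$-dependent residue survives the identification. A second subtlety is that each summand $\calH(\cdot,\cdot;\cdot,\cdot)$ is genuinely a two-dimensional field in $(x_i,x_{i+1})$, so the rewriting above is really a one-dimensional identity along either the diagonal (using noise-shift invariance) or the anti-diagonal direction (using the stationarity from Propositions \ref{p:ltkpzeq}\ref{p:lstat} and \ref{p:stkpzeq}\ref{p:sstat}); controlling the 2D oscillation inside a unit cube then requires telescoping through intermediate points such as $(n_i,x_{i+1})$ and applying 1D moduli in each step.
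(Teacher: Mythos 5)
Your plan follows the same skeleton as the paper's proof (unit cubes indexed by $\Z^k$, stationarization by attaching the full parabola, one-point tails for the corner value, local-fluctuation estimates for the in-cube oscillation, and a union bound over cubes powered by the $\nu$-fraction of the parabola), but two steps have genuine gaps. The first is the slack allocation in the per-cube split. You give the corner term the full benefit of the parabola (the event $\mathsf{Pt}(\vec n)\ge s/2$ is $\bar F(\vec n)\ge s/2+\nu P(\vec n)$) but give the oscillation only the flat threshold $s/2$, with no $\vec n$-dependence. If $\mathsf{Osc}(\vec n)$ is the oscillation of $F_{\vec t;\e}+(1-\nu)P=\bar F-\nu P$, then a cube at distance $|\vec n|$ from the origin contains a deterministic upward variation of $-\nu P$ of order $\nu|\vec n|$, so $\Pr(\mathsf{Osc}(\vec n)\ge s/2)$ does not decay in $\vec n$ (it tends to one); and even if you instead measure the oscillation of the stationarized $\bar F$, its tail is location-independent, so $\sum_{\vec n\in\Z^k}\Pr(\mathsf{Osc}(\vec n)\ge s/2)$ diverges — the $\exp(-cR^3)$ decay you invoke in the shell sum comes only from the corner term. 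The paper's bookkeeping fixes exactly this: the oscillation of $\bar F$ is given the threshold $\tfrac s2+\tfrac\nu4\norm{\vec a}^2$ and the corner the other half of the $\nu$-slack, which is what makes the union over cubes summable.

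The second gap is the oscillation bound itself. Subdividing the image of a unit $x$-cube into sheet-coordinate pieces of length $1$ (long-time) or $\sqrt{\tau_i}$ (short-time) produces of order $(t_{i+1}-t_i)^{-2/3}$ pieces in the long-time case and of order $\e^{1/3}(t_{i+1}-t_i)^{-1}$ pieces in the short-time case; the latter is not polynomial in $\e^{-1}$, since the partition $\vec t$ is arbitrary while $\Con$ may depend only on $k,\nu$. More importantly, an unbounded prefactor $N$ cannot be ``absorbed by strengthening the exponent constant'': at fixed $s$ the target $\Con\exp(-s^{3/2}/\Con)$ is a fixed number while $N\exp(-cs^{3/2})$ grows with $N$, so any absorption must use that the oscillation threshold, expressed in sheet units, is amplified by $(t_{i+1}-t_i)^{-1/3}$ (resp.\ the short-time factor), which beats the entropy — a point you never make; you also do not say how the threshold is split among the pieces when telescoping (a plain union bound controls the maximum of per-piece oscillations, not their telescoped sum). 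The paper avoids all of this by proving the directional two-point increment bound \eqref{eq:diff} in macroscopic coordinates with normalization $su^{1/4}$, uniformly in $u\in[0,1]$, $\e$ and $\vec t$: it uses Proposition \ref{p:ltkpzeq} \ref{p:ltlf} and Proposition \ref{p:stlf} only when the sheet displacement is at most $1$ (resp.\ $\sqrt{\tau_i}$), and otherwise just one-point tails plus stationarity, where the amplified threshold makes one-point bounds sufficient; the supremum over the cube then follows from the generic chaining lemma (Lemma 3.3 of \cite{dv18} with $\alpha=1/4$, $\beta=3/2$) with constants uniform in $\e$ and $\vec t$. Your identification of the summands with $\bar\h$, $\bar\g$ and the role of the correction constants in \eqref{def:f} is correct and matches the paper, but as written the cube-summation and the oscillation-entropy steps do not close.
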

\begin{proof} For clarity, we split the proof into three steps.
	
	\medskip
	
	\noindent\textbf{Step 1.}  Let us fix any $\e\in (0,1)$ consider $0 = t_0 < t_1 < \dots < t_k = 1$. For brevity,  we denote $F(\vec{x}):=F_{\vec{t};\e}(\vec{x})$ and set
	\begin{align}\label{barf}
		\bar{F}(\vec{x}): =  F(\vec{x}) + \sum_{i=0}^{k-1} \frac{(x_{i+1}-x_i)^2}{2(t_{i+1}-t_i)}.
	\end{align}
	For any $\vec{a}=(a_1,\ldots,a_k)\in \Z^k$, set $V_{\vec{a}}:=[a_1,a_1+1]\times\cdots\times[a_{k},a_k+1]$ and set
	\begin{align}\label{norma}
		\norm{\vec{a}}^2:=a_1^2+\min_{\vec{x}\in V_{\vec{a}}}\sum_{i=1}^{k-1} (x_{i+1}-x_i)^2.
	\end{align}
	We claim that for any $\vec{a} = (a_1, \ldots, a_k) \in \Z^k$ and $\nu \in (0,1)$
	\begin{align}\label{eq:pabd}
		\Pr\left(\sup_{\vec{x}\in V_{\vec{a}}}\left[F(\vec{x})+\sum_{i=0}^{k-1} \frac{(1-\nu)(x_{i+1}-x_i)^2}{2(t_{i+1}-t_i)} \right]\ge s \right)\le \Con \exp\left(-\tfrac{1}{\Con}(s^{3/2}+\norm{\vec{a}}^3)\right)
	\end{align}
	for some $\Con = \Con(k,\nu)>0.$ Assuming \eqref{eq:pabd} by union bound we obtain 
	\begin{align*}
		\text{ l.h.s of  \eqref{eq:ltail}} & = \Pr\left(\sup_{\vec{x}\in \R^k}\left[F(\vec{x})+\sum_{i=0}^{k-1} \frac{(1-\nu)(x_{i+1}-x_i)^2}{2(t_{i+1}-t_i)}\right] \ge s \right)\\&\le \sum_{\vec{a} \in \Z^k} 	\Pr\left(\sup_{\vec{x}\in V_{\vec{a}}}\left[F(\vec{x})+\sum_{i=0}^{k-1} \frac{(1-\nu)(x_{i+1}-x_i)^2}{2(t_{i+1}-t_i)} \right]\ge s \right) \\ &  \le \sum_{\vec{a}\in \Z^k} \Con \exp\left(-\tfrac{1}{\Con}(s^{3/2}+\norm{\vec{a}}^3)\right).
	\end{align*}
	The r.h.s.~of the above display is upper bounded by $\Con \exp(-\frac{1}{\Con}s^{3/2})$ and proves \eqref{eq:ltail}. Thus it suffices to verify \eqref{eq:pabd} in the rest of the proof. 
	
	\medskip

	\noindent\textbf{Step 2.} In this step, we prove the claim in \eqref{eq:pabd}. Note that 
	$$\Pr\left(\sup_{\vec{x}\in V_{\vec{a}}}\left[F(\vec{x})+\sum_{i=0}^{k-1} \frac{(1-\nu)(x_{i+1}-x_i)^2}{2(t_{i+1}-t_i)}\right]\ge s \right)  \le \Pr\left(\sup_{\vec{x}\in V_{\vec{a}}}|\bar{F}(\vec{x})| \ge s + \tfrac\nu2\norm{\vec{a}}^2\right)$$ 
	by the definition of $\bar{F}(\cdot)$ in \eqref{barf} and the defintion of $\norm{\vec{a}}^2$ from \eqref{norma}. Applying union bound yields
	\begin{align}\label{eq:intbd}
		&\Pr\left(\sup_{\vec{x}\in V_{\vec{a}}}|\bar{F}(\vec{x})| \ge s + \tfrac\nu2\norm{\vec{a}}^2\right) \notag\\ & \le \Pr\left(\sup_{\vec{x}\in V_{\vec{a}}}|\bar{F}(\vec{x}) - \bar{F}(\vec{a})| \ge \tfrac{s}2 + \tfrac{\nu}4 \norm{\vec{a}}^2\right)  + \Pr\left(|\bar{F}(\vec{a})| \ge \tfrac{s}2 + \tfrac{\nu}4 \norm{\vec{a}}^2\right) .
	\end{align}
	In the rest of the proof, we bound both summands on the r.h.s of \eqref{eq:intbd} from above by $\Con  \exp(-\frac{1}{\Con}(s^{3/2}+\norm{\vec{a}}^3))$ individually. To control the first term, we first need an a priori estimate. We claim that for all $u\in [0,1]$, $i=1,2,\ldots,k$ and $s>0$ we have
	\begin{align}\label{eq:diff}
		\Pr\left(\bar{F}(\vec{a}+e_i \cdot u)-\bar{F}(\vec{a}) \ge su^{1/4}\right) \le \Con\exp\left(-\tfrac1\Con s^{3/2}\right).
	\end{align}
	for some absolute constant $\Con>0$. We will prove \eqref{eq:diff} in the next step. Given \eqref{eq:diff}, appealing to Lemma 3.3 in \cite{dv18} with $\alpha = \alpha_i = \frac{1}{4}, \beta = \beta_i = \frac{3}{2}, r = r_i = 1$, we get that for all $m>0$
	\begin{align*}
		\Pr\left(\sup_{\vec{x}\in V_{\vec{a}}} |\bar{F}(\vec{x})-\bar{F}(\vec{a})| \ge m\right) \le \Con\exp\left(-\tfrac1\Con m^{3/2}\right).
	\end{align*}
	Taking $m=\frac{s}{2}+\frac{\nu}4\norm{\vec{a}}^2$ in above, this yields the desired estimate for the first term in \eqref{eq:intbd}. 	
	
	For the second term in \eqref{eq:intbd}, via the definition of $\bar{F}$ in \eqref{barf} applying union bounds we have 
	\begin{align} \notag
		&\Pr\left(|\bar{F}(\vec{a})| \ge \tfrac{s}4 + \tfrac{\nu}4\norm{\vec{a}}^2\right)  \\ & \le \sum_{i =0}^{k-1} \Pr\left(\left| \e^{1/3} \calH({a_i\e^{-2/3},\e^{-1}t_i;a_{i+1}\e^{-2/3},\e^{-1}t_{i+1}})+\tfrac{\e^{-2/3}(t_{i+1}-t_i)}{24}\right.\right. \notag \\ & \left.\left. \hspace{3cm}
		+\tfrac{(a_{i+1}-a_i)^2}{2(t_{i+1}-t_i)}+\e^{1/3}\ind\{t_{i+1}-t_i\le \e\} \cdot \log \sqrt{2\pi \e^{-1}(t_{i+1}-t_i)}\right| \ge \tfrac{s}{4k} + \tfrac{\nu}{4k}\norm{\vec{a}}^2\right) \notag \\ & \le \sum_{i =0}^{k-1} \Pr\left(\left| \e^{1/3} \calH(0,\e^{-1}(t_{i+1}-t_i))+\tfrac{\e^{-2/3}(t_{i+1}-t_i)}{24}\right.\right. \notag \\ & \hspace{3cm}\left.\left.+\e^{1/3}\ind\{t_{i+1}-t_i\le \e\} \cdot \log \sqrt{2\pi \e^{-1}(t_{i+1}-t_i)}\right| \ge \tfrac{s}{4k} + \tfrac{\nu}{4k}\norm{\vec{a}}^2\right) \label{e:kcalh}
	\end{align}
	where the last line follows from stationarity of {the shifted version of} $\calH$. Now if $\e^{-1}(t_{i+1}-t_i)> 1$, we may use long-time scaling to get
	\begin{align*}
		\e^{1/3} \calH(0,\e^{-1}(t_{i+1}-t_i))+\frac{\e^{-2/3}(t_{i+1}-t_i)}{24}=\frac{\h_{\e^{-1}(t_{i+1}-t_i)}(0)}{(t_{i+1}-t_i)^{-1/3}}.
	\end{align*}
	{Using the fact that $\e < |t_{i+1}-t_i|\le 1$ along with the one-point long-time tail estimates from Proposition \ref{p:ltkpzeq} \ref{p:ltail} we get 
	\begin{align*}
	    \Pr\left(|\h_{\e^{-1}(t_{i+1}-t_i)}(0)| \ge {(t_{i+1}-t_i)^{-1/3}}(\tfrac{s}{4k}+\tfrac{\nu}{4k}\norm{\vec{a}}^2)\right) & \le \Pr\left(|\h_{\e^{-1}(t_{i+1}-t_i)}(0)| \ge \tfrac{s}{4k}+\tfrac{\nu}{4k}\norm{\vec{a}}^2\right) \\ & \le  \Con \exp(-\tfrac1\Con(s + \norm{\vec{a}}^2)^{3/2}) \\ & \le \Con  \exp\left(-\tfrac{1}{\Con}(s^{3/2}+\norm{\vec{a}}^3)\right),
	\end{align*}
	for some constant $\Con=\Con(k,\nu)>0$. If $\e^{-1}(t_{i+1}-t_i)\le 1$, we may use short-time scaling to get
	\begin{align*}
		& \e^{1/3} \calH(0,\e^{-1}(t_{i+1}-t_i))+\frac{\e^{-2/3}(t_{i+1}-t_i)}{24}+ \e^{1/3}\log \sqrt{2\pi \e^{-1}(t_{i+1}-t_i)} \\ & \hspace{2cm}=\e^{1/3}\big(\tfrac{\pi \e^{-1}(t_{i+1}-t_i)}4\big)^{1/4}\g_{\e^{-1}(t_{i+1}-t_i)}(0)+\frac{\e^{-2/3}(t_{i+1}-t_i)}{24}.
	\end{align*}
	The linear term above is uniformly bounded in this case. Furthermore, $$\e^{1/3}\big(\tfrac{\pi \e^{-1}(t_{i+1}-t_i)}4\big)^{1/4}=\big(\tfrac{\pi (t_{i+1}-t_i)}4\big)^{1/4}\e^{1/12}\le 1.$$} Thus, in this case, appealing to one-point short-time tail estimates from Proposition \ref{p:stkpzeq} \ref{p:stail}, we have
	\begin{align*}
		\mbox{r.h.s.~of \eqref{e:kcalh}} \le \Con \exp(-\tfrac1\Con(s + \norm{\vec{a}}^2)^{3/2}) \le \Con  \exp\left(-\tfrac{1}{\Con}(s^{3/2}+\norm{\vec{a}}^3)\right)
	\end{align*}
	for some constant $\Con = \Con (k,\nu)>0.$  
	
	This proves the required bound for the second term in \eqref{eq:intbd}. Combining the bounds for the two terms in \eqref{eq:diff}, we thus arrive at \eqref{eq:pabd}. Hence, all we are left to show is \eqref{eq:diff} which we do in the next step.
	
	\medskip
	
	\noindent\textbf{Step 3.} Fix $\vec{a}\in \Z^k$, fix $i=1,2,\ldots, k$. The goal of this step is to show \eqref{eq:diff}. Towards this end, note that for each coordinate vector $e_i, i = 1, \ldots, k-1$, and for $u\in [0,1]$ observe that
	\begin{align*}
		& \bar{F}(\vec{a} + e_i\cdot u) - \bar{F}(\vec{a}) \\ & = \e^{1/3}\left[\calH(a_{i-1}\e^{-2/3},\e^{-1}t_{i-1};(a_i+u)\e^{-2/3},\e^{-1}t_i)-\calH(a_{i-1}\e^{-2/3},\e^{-1}t_{i-1};a_i\e^{-2/3},\e^{-1}t_i)\right]  \\ & \hspace{4cm}+\frac{(a_{i-1}-a_i-u)^2-(a_{i-1}-a_i)^2}{2(t_{i}-t_{i-1})} \\ & \hspace{1cm}+ \e^{1/3}\left[\calH((a_{i}+u)\e^{-2/3},\e^{-1}t_{i};a_{i+1}\e^{-2/3},\e^{-1}t_{i+1})-\calH(a_{i}\e^{-2/3},\e^{-1}t_{i};a_{i+1}\e^{-2/3},\e^{-1}t_{i+1})\right] \\ & \hspace{5cm}+\frac{(a_{i+1}-a_i-u)^2-(a_{i+1}-a_i)^2}{2(t_{i+1}-t_i)}. 
	\end{align*}
	Thus using distributional identities (see Remark \ref{re:gscal}) by union bound for all $s>0$ we get that
	\begin{align}
		\notag	& \Pr\left(|\bar{F}(\vec{a} + e_i\cdot u) - \bar{F}(\vec{a})| \ge s u^{\frac14}\right) \\ &\le \Pr\left(\e^{\frac13}\left|\bar\calH_{\e^{-1}(t_{i}-t_{i-1})}((a_i+u-a_{i-1})\e^{-2/3})-\bar\calH_{\e^{-1}(t_{i}-t_{i-1})}((a_i-a_{i-1})\e^{-2/3})\right|\ge\tfrac{s}2u^{\frac14}\right) \label{A} \\ & \hspace{0.5cm}+ \Pr\left(\e^{\frac13}\left|\bar\calH_{\e^{-1}(t_{i+1}-t_{i})}((a_i+u-a_{i+1})\e^{-2/3})-\bar\calH_{\e^{-1}(t_{i+1}-t_{i})}((a_i-a_{i+1})\e^{-2/3})\right|\ge\tfrac{s}2u^{\frac14}\right) \label{B},
	\end{align}
	where $\bar\calH_t(x):=\calH(x,t)+\frac{x^2}{2t}.$ We now proceed to bound the second term on the r.h.s.~of above display (that is the term in \eqref{B}); the bound for the first term follows analogously. 
	
	\medskip
	
	\noindent\textbf{Case 1.} $\e^{-1}(t_{i+1}-t_{i})\ge 1$. We then use the long-time scaling to conclude
	\begin{align*}
		& \e^{\frac13}\left|\bar\calH_{\e^{-1}(t_{i+1}-t_{i})}((a_i+u-a_{i+1})\e^{-2/3})-\bar\calH_{\e^{-1}(t_{i+1}-t_{i})}((a_i-a_{i+1})\e^{-2/3})\right|
		\\ & \hspace{2cm}=\frac{\bar\h_{\e^{-1}(t_{i+1}-t_{i})}\left(\tfrac{a_i+u-a_{i+1}}{(t_{i+1}-t_i)^{2/3}}\right)-\bar\h_{\e^{-1}(t_{i+1}-t_{i})}\left(\tfrac{a_i-a_{i+1}}{(t_{i+1}-t_i)^{2/3}}\right)}{(t_{i+1}-t_i)^{-1/3}}
	\end{align*}
	where $\bar{\h}_s(x):=\h_s(x)+\frac{x^2}2$. We now consider two cases depending on the value of $u$.
	
	\medskip
	
	\noindent\textbf{Case 1.1.} Suppose $u\in [0,(t_{i+1}-t_i)^{2/3}]$.
	By Proposition \ref{p:ltkpzeq} \ref{p:ltlf} with $\gamma \mapsto \frac{u}{(t_{i+1}-t_i)^{2/3}}$, and using the fact that $\sqrt{\gamma} \le u^{1/4}(t_{i+1}-t_i)^{-1/3}$, we see that  $\eqref{B}\le \Con\exp(-\frac1{\Con}s^{3/2})$ for some $\Con>0$ in this case.
	
	\medskip
	
	\noindent\textbf{Case 1.2.} For $u\in [(t_{i+1}-t_i)^{2/3},1]$, we rely on one-point tail bounds. Indeed applying union bound we have
	\begin{align*}
		\eqref{A} & \le \Pr\left(\left|\frac{\bar\h_{\e^{-1}(t_{i+1}-t_{i})}\left(\tfrac{a_i+u-a_{i+1}}{(t_{i+1}-t_i)^{2/3}}\right)}{(t_{i+1}-t_i)^{-1/3}} \right|\ge\tfrac{s}8u^{1/4}\right)+\Pr\left(\left|\frac{\bar\h_{\e^{-1}(t_{i+1}-t_{i})}\left(\tfrac{a_i-a_{i+1}}{(t_{i+1}-t_i)^{2/3}}\right)}{(t_{i+1}-t_i)^{-1/3}}  \right|\ge\tfrac{s}8u^{1/4}\right)\\ & \le \Con\exp\left(-\tfrac1\Con s^{3/2}u^{3/8}(t_{i+1}-t_i)^{-1/2}\right) \le  \Con\exp\left(-\tfrac1{\Con}s^{3/2}\right).
	\end{align*}
	The penultimate inequality above follows from Proposition \ref{p:ltkpzeq} \ref{p:lstat}, \ref{p:ltail} and the last one follows from the fact $u\ge (t_{i+1}-t_i)^{2/3}$ and $t_{i+1}-t_i\in (0,1]$.
	
	\medskip
	
	\noindent\textbf{Case 2.} $\e^{-1}(t_{i+1}-t_{i})\le 1$. We here use the short-time scaling to conclude
	\begin{align*}
		& \e^{\frac13}\left|\bar\calH_{\e^{-1}(t_{i+1}-t_{i})}((a_i+u-a_{i+1})\e^{-2/3})-\bar\calH_{\e^{-1}(t_{i+1}-t_{i})}((a_i-a_{i+1})\e^{-2/3})\right|
		\\ & \hspace{1cm}=\big(\tfrac{\pi \e^{1/3}(t_{i+1}-t_i)}{4}\big)^{\frac14}\left[{\bar\g_{\e^{-1}(t_{i+1}-t_{i})}\left(\tfrac{2(a_i+u-a_{i+1})}{\sqrt{\pi \e^{1/3}(t_{i+1}-t_i)}}\right)-\bar\g_{\e^{-1}(t_{i+1}-t_{i})}\left(\tfrac{2(a_i-a_{i+1})}{\sqrt{\pi \e^{1/3}(t_{i+1}-t_i)}}\right)}\right]
	\end{align*}
	where $\bar{\g}_s(x):=\g_s(x)+\frac{(\pi s/4)^{3/4}x^2}{2s}$. We again consider two cases depending on the value of $u$.	 
	
	\medskip
	
	\noindent\textbf{Case 2.1.} Suppose $u\in (0,\frac{\sqrt{\pi}}{2}\e^{-1/3}(t_{i+1}-t_i))$. Then $\frac{2u}{\sqrt{\pi \e^{1/3}(t_{i+1}-t_i)}} < \sqrt{\e^{-1}(t_{i+1}-t_i)}$. This allows us to apply Proposition  \ref{p:stlf} with $\gamma\mapsto \frac{2u}{\sqrt{\pi \e^{1/3}(t_{i+1}-t_i)}}$ and $t\mapsto \e^{-1}(t_{i+1}-t_i)$. Using the fact that $u^{1/2} \le u^{1/4}$ for $u\in [0,1]$, we see that  $\eqref{B}\le \Con\exp(-\frac1{\Con}s^{3/2})$ for some $\Con>0$ in this case.
	
	\medskip
	
	\noindent\textbf{Case 2.2.} For $u\in [\frac{\sqrt{\pi}}{2}\e^{-1/3}(t_{i+1}-t_i),1]$, we rely on stationarity and one-point tail bounds (Proposition \ref{p:stkpzeq} \ref{p:sstat}, \ref{p:stail}). Indeed applying union bound we have
	\begin{align*}
		\eqref{B} & \le \Pr\left(\big(\tfrac{\pi \e^{1/3}(t_{i+1}-t_i)}{4}\big)^{\frac14}\left|\bar\g_{\e^{-1}(t_{i+1}-t_{i})}\left(\tfrac{2(a_i+u-a_{i+1})}{\sqrt{\pi \e^{1/3}(t_{i+1}-t_i)}}\right) \right|\ge\tfrac{s}8u^{1/4}\right) \\ & \hspace{3cm}+\Pr\left(\big(\tfrac{\pi \e^{1/3}(t_{i+1}-t_i)}{4}\big)^{\frac14}\left|\bar\g_{\e^{-1}(t_{i+1}-t_{i})}\left(\tfrac{2(a_i-a_{i+1})}{\sqrt{\pi \e^{1/3}(t_{i+1}-t_i)}}\right)  \right|\ge\tfrac{s}8u^{1/4}\right)\\ & \le \Con\exp\left(-\tfrac1\Con \left[s u^{1/4}(t_{i+1}-t_i)^{-1/4}\e^{-\frac1{12}}\right]^{3/2}\right).
	\end{align*}
	As $u\ge \frac{\sqrt{\pi}}{2}\e^{-1/3}(t_{i+1}-t_i)$, and $\e\in (0,1)$ we have $u^{1/4}(t_{i+1}-t_i)^{-1/4}\e^{-1/12} \ge \frac{\sqrt{\pi}}{2}$. Thus the last expression above is at most $\Con\exp\left(-\tfrac1{\Con}s^{3/2}\right)$.
	
	\medskip	 
	
	Combining the above two cases we have $\eqref{B} \le \Con\exp(-\frac1\Con s^{3/2})$ uniformly for $u\in [0,1]$. By the same argument one can show the term in \eqref{A} is also upper bounded by $\Con\exp(-\frac1\Con s^{3/2})$. This yields \eqref{eq:diff} for $i=1,2,\ldots,k-1$.
	
	Finally for $i=k$, observe that 
	\begin{align*}
		& \bar{F}(\vec{a} + e_k\cdot u) - \bar{F}(\vec{a}) \\ & = \e^{1/3}\left[\calH(a_{k-1}\e^{-2/3},\e^{-1}t_{k-1};(a_k+u)\e^{-2/3},\e^{-1})-\calH(a_{k-1}\e^{-2/3},\e^{-1}t_{k-1};a_k\e^{-2/3},\e^{-1})\right]  \\ & \hspace{4cm}+\frac{(a_{k-1}-a_k-u)^2-(a_{k-1}-a_k)^2}{2(1-t_{k-1})}.
	\end{align*}
	Then \eqref{eq:diff} follows for $i=k$ by the exact same computations as above. This completes the proof of the lemma.
\end{proof}

\subsection{Proof of Proposition \ref{p:ltd} and \ref{p:ltd.ptl}} \label{sec:twopt}
We now present the proofs of Proposition \ref{p:ltd} and \ref{p:ltd.ptl}. 

\begin{proof}[Proof of Proposition \ref{p:ltd}] We assume $m\ge 16\tau^2+1$. Otherwise the constant $\Con_1$ can be chosen large enough so that the inequality holds trivially. Without loss of generality assume $s<t$. We first consider the case when $s,t\in (0,1)$. Note that
	\begin{align*}
		& \Pr^{\xi}(|L_s^{(\e)}-L_t^{(\e)}|\ge m|s-t|^{\frac12-\delta}) \\  =& \iint\limits_{|u-v|\ge  m\e^{-2/3}|s-t|^{\frac12-\delta} } \frac{\calZ(0,0; u,\e^{-1}s)\calZ(u,\e^{-1}s;v, \e^{-1}t)\calZ(v,\e^{-1} t;x,\e^{-1})}{\calZ(0,0;x,\e^{-1})}\, \d u\, \d v.
	\end{align*}
	We make a change of variable $u=p\e^{-2/3}$,  $v=q\e^{-2/3}$ and $x= z \e^{-2/3}$. Then
	\begin{align}\notag
		& \Pr^{\xi}(|L_s^{(\e)}-L_t^{(\e)}|\ge m|s-t|^{\frac12-\delta})  \\= & \e^{-4/3} \iint\limits_{|p-q|\ge  m |s-t|^{\frac12-\delta} } \frac{\calZ(0,0; p\e^{-2/3},\frac{s}{\e})\calZ(p\e^{-2/3},\frac{s}{\e};q\e^{-2/3}, \frac{t}{\e})\calZ(q\e^{-2/3},\frac{t}{\e};z\e^{-2/3},\frac1{\e})}{\calZ(0,0;z\e^{-2/3},\e^{-1})}\, \d q\, \d p .\label{e:int3}
	\end{align}
	Recall the multivariate spatial process $F_{\vec{t};\e}(\vec{x})$ from \eqref{def:f}. Take $k=3$ and set $\vec{t}=(s,t,1)$, and $\vec{x}=(p,q,z)$. We also set $$B(\vec{t}\,):=\ind\{s\le \e\}\log\sqrt{2\pi\frac{s}{\e}}+\ind\{t-s\le \e\}\log\sqrt{2\pi\frac{t-s}{\e}}+\ind\{1-t\le \e\}\log\sqrt{2\pi\frac{1-t}{\e}}.$$
	For the numerator of the integrand in \eqref{e:int3}  observe that
	\begin{align}\label{e:iden}
		& \calZ(0,0; p\e^{-\frac23},\tfrac{s}{\e})\calZ(p\e^{-\frac23},\tfrac{s}{\e};q\e^{-\frac23}, \tfrac{t}{\e})\calZ(q\e^{-\frac23},\tfrac{t}{\e};z\e^{-\frac23},\tfrac1{\e})
		=
		\exp\left[\e^{-\frac13}F_{\vec{t};\e}(\vec{x})-\tfrac{\e^{-1}}{24}-B(\vec{t}\,)\right].
	\end{align}
	Set $M=\frac{m^2}{64}$. Applying Lemma \ref{l:ltail} with $\nu=\frac12$ and $s=M$, we see that with probability greater than $1- \Con \exp(-\frac{1}{\Con}M^{3/2})$, 
	\begin{align}\label{eqs}
		\mbox{r.h.s.~of \eqref{e:iden}} \le \exp\left[\e^{-1/3}M-\e^{-1/3}\left(\tfrac{p^2}{4s}+\tfrac{(q-p)^2}{4(t-s)}+\tfrac{(z-q)^2}{4(1-t)}\right)-\tfrac{\e^{-1}}{24}-B(\vec{t}\,)\right].
	\end{align}
	On the other hand, for the denominator of the integrand in \eqref{e:int3} by one-point long-time tail bound from Proposition \ref{p:ltkpzeq} with probability at least $1-\Con\exp(-\frac1{\Con}M^{3/2})$ we have
	\begin{align*}
		\calZ(0,0;z\e^{-2/3},\e^{-1}) \ge \exp\left(\e^{-1/3}\h_{\e^{-1}}(z)-\tfrac{\e^{-1}}{24}\right) \ge \exp\left(-\e^{-1/3}(M+\tfrac12\tau^2)-\tfrac{\e^{-1}}{24}\right).
	\end{align*}
	Combining the previous equation with \eqref{eqs} we get that with probability at least $1-\Con\exp(-\frac1{\Con}M^{3/2})$ we have
	\begin{align} \notag
		\mbox{r.h.s.~of \eqref{e:int3}} & \le \e^{-\frac43} \exp\left(\e^{-1/3}(2M+\tfrac12\tau^2)-B(\vec{t}\,)\right)\cdot \\ & \hspace{2cm}\iint_{|p-q|\ge m|s-t|^{\frac12-\delta}} \exp\left[-\e^{-1/3}\left(\tfrac{p^2}{4s}+\tfrac{(q-p)^2}{4(t-s)}+\tfrac{(z-q)^2}{4(1-t)}\right)\right]dq\, dp \notag \\ & \le \e^{-\frac43} \exp\left(\e^{-\frac13}(2M+\tfrac12\tau^2-\tfrac{m^2}{4|t-s|^{2\delta}})-B(\vec{t}\,)\right)\iint_{\R^2} \exp\left[-\e^{-\frac13}\left(\tfrac{p^2}{4s}+\tfrac{r^2}{4(1-t)}\right)\right]dr\, dp \notag \\ & = 4\pi\sqrt{s(1-t)}\e^{-1} \exp\left(\e^{-\frac13}(2M+\tfrac12\tau^2-\tfrac{m^2}{4|t-s|^{2\delta}})-B(\vec{t}\,)\right). \label{e:la}
	\end{align}
	Observe that 
	\begin{align}
		\label{use}
		{\sqrt{r}}\exp\left(-\ind\{r\le \e\}\log\sqrt{\tfrac{2\pi r}{\e}}\right) \le 1.
	\end{align}
	As $M=\frac{m^2}{64}$ we have $2M-\frac{m^2}{4|t-s|^{2\delta}} \le -\frac{m^2}{8|t-s|^{2\delta}}$. Furthermore $\frac12\tau^2 \le \frac{m^2}{16|t-s|^{2\delta}}$ under the condition $m\ge 16\tau^2+1$. Thus,
	\begin{align*}
		\mbox{r.h.s.~of \eqref{e:la}} \le 4\pi \e^{-1} \exp\left(-\e^{-\frac13}\tfrac{m^2}{16|t-s|^{2\delta}}-\ind\{t-s\le \e\}\log\sqrt{\tfrac{2\pi(t-s)}{\e}}\right).
	\end{align*}
	Clearly the last expression is at most $\Con_1\exp(-\frac1{\Con_1}m^2)$ for some $\Con_1>0$ depending on $\tau,\delta$. This bound holds uniformly over $t,s\in (0,1)$ with $t\neq s$ and $\e\in (0,1)$. This concludes the proof for $s,t\in (0,1)$.
	
	{Finally, when $s=0$ we have 
		\begin{align*}
		& \Pr^{\xi}(|L_t^{(\e)}|\ge m|t|^{\frac12-\delta})  = \iint\limits_{|v|\ge  m\e^{-2/3}|t|^{\frac12-\delta} } \frac{\calZ(0,;v, \e^{-1}t)\calZ(v,\e^{-1} t;x,\e^{-1})}{\calZ(0,0;x,\e^{-1})}\,  \d v.
	\end{align*}
	The proof can now be completed by following the argument for $s,t\in (0,1)$ case. Indeed, the denominator can be bounded by the exact same manner as above, whereas the numerator can be controlled with the $k=2$  version of Lemma \ref{l:ltail}. The case $t=1$ is analogous to the case $s=0$. We have thus established Proposition \ref{p:ltd}}.
\end{proof}
\begin{proof}[Proof of Proposition \ref{p:ltd.ptl}] We now explain how the above proof can be modified to extend it to the point-to-line version. Fix any $m>0$ and $M>1$. Indeed observe that for $0<s<t<1$, one has 
	\begin{align}\notag
		& \Pr_*^{\xi}(|L_{s,*}^{(\e)}-L_{t,*}^{(\e)}|\ge m|s-t|^{\frac12-\delta})  \\= & \e^{-\frac43} \int_{\R}\iint\limits_{|p-q|\ge  m |s-t|^{\frac12-\delta} } \frac{\calZ(0,0; p\e^{-\frac23},\frac{s}{\e})\calZ(p\e^{-\frac23},\frac{s}{\e};q\e^{-\frac23}, \frac{t}{\e})\calZ(q\e^{-\frac23},\frac{t}{\e};z\e^{-\frac23},\frac1{\e})}{\int_{\R}\calZ(0,0;y\e^{-\frac23},\e^{-1})\d y}\, \d q\, \d p \, \d z.  \label{ers}
	\end{align}
	Since Lemma \ref{l:ltail} is a process-level estimate that allows even the endpoint to vary, \eqref{eqs} continues to hold simultaneously for all $p,q,z\in \R$ with same high probability.  However for the lower bound on the denominator, one-point lower-tail bound is not sufficient. Instead, for the denominator we use long-time process-level lower bound from Proposition 4.1 in \cite{cgh} to get that with probability at least $1-\Con\exp(-\frac1\Con M^{3/2})$ we have
	\begin{align*}
		\int_{\R} \calZ(0,0;y\e^{-2/3},\e^{-1})\d y \ge \int_{\R}\exp\left(-\tfrac{M+y^2}{\e^{1/3}}-\tfrac{\e^{-1}}{24}\right)\d y \ge \Con \e^{\frac16}\exp\left(-\e^{-1/3}M-\tfrac{\e^{-1}}{24}\right). 
	\end{align*}
	{Combining the
previous equation with \eqref{eqs}} we get that with probability at least $1-\Con\exp(-\frac1{\Con}M^{3/2})$ we have
	\begin{equation}\label{ers2}
	    \begin{aligned} 
		\mbox{r.h.s.~of \eqref{ers}} & \le \e^{-\frac32} \exp\left(2\e^{-1/3}M-B(\vec{t}\,)\right)\cdot \\ & \hspace{2cm}\int_{\R}\iint_{|p-q|\ge m|s-t|^{\frac12-\delta}} \exp\left[-\e^{-1/3}\left(\tfrac{p^2}{4s}+\tfrac{(q-p)^2}{4(t-s)}+\tfrac{(z-q)^2}{4(1-t)}\right)\right]dq\, dp\, dz. 
	\end{aligned}
	\end{equation}
	{On $|p-q|\ge m|s-t|^{\frac12-\delta}$, we have $(q-p)^2/4(t-s) \ge (q-p)^2/8(t-s)+m^2/8|t-s|^{2\delta}$. Applying this inequality followed by expanding the range of integration we get}
	\begin{align} \notag
		\mbox{r.h.s.~of \eqref{ers2}} & \le  \e^{-\frac32} \exp\left(\e^{-\frac13}(2M-\tfrac{m^2}{8|t-s|^{2\delta}})-B(\vec{t}\,)\right) \notag \\ & \hspace{2cm}\cdot\int_{\R}\int_{\R}\int_{\R} \exp\left[-\e^{-\frac13}\left(\tfrac{p^2}{4s}+\tfrac{r^2}{8(t-s)}+\tfrac{u^2}{4(1-t)}\right)\right]dq\, dr\, du \notag \\ & = \sqrt{2^7\pi^3s(1-t)(t-s)} \cdot \e^{-1} \exp\left(\e^{-\frac13}(2M-\tfrac{m^2}{8|t-s|^{2\delta}})-B(\vec{t}\,)\right). \notag
	\end{align}
	Just as in the proof of Proposition \ref{p:ltd}, setting $M=\frac{m^2}{64}$, and using \eqref{use}, the above expression can be shown to be at most $\Con\exp(-\frac1\Con m^2)$ uniformly over $\e\in (0,1)$ and $0<s<t<1$. This establishes the proposition.
\end{proof}

\section{Annealed Convergence for short-time and long-time}\label{sec:annconv}

In this section we prove our main results. In Section \ref{sec4.1} we prove Theorems \ref{thm:ann_short}, \ref{ltight}, and \ref{ltight.ptl}. In Section \ref{sec4.2}, we show Theorem \ref{thm:ann_long_pr} assuming Conjecture \ref{conj:sheet}. 

\subsection{Proof of Theorems \ref{thm:ann_short}, \ref{ltight}, and \ref{ltight.ptl}} \label{sec4.1}

In this section we prove results related to short-time and long-time tightness and related pointwise weak convergence. While the proof of long-time tightness relies on modulus of continuity estimates from Proposition \ref{p:ltd} and Proposition \ref{p:ltd.ptl},  the proof of short-time tightness relies on the following Brownian relation of annealed law of $\cdrp$.

\begin{lemma}[Brownian Relation] \label{l3} Let $X\sim \cdrp(0,0;0,t)$ and $Y\sim \cdrp(0,0;*,t)$.  For any continuous functional $\mathcal{L} :C([0,t])\to \R$ we have
	\begin{align}\label{br}
		\Ex\left[\calZ(0,0;0,t)\sqrt{2\pi t}\cdot\mathcal{L}(X)\right]=\Ex(\mathcal{L}(B)), \quad \Ex\left[\calZ(0,0;*,t)\cdot\mathcal{L}(Y)\right]=\Ex(\mathcal{L}(B_*))
	\end{align}
	where $B_*$ and $B$ are standard Brownian motion and standard Brownian bridge on $[0,t]$ respectively. 
\end{lemma}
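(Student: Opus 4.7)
The plan rests on two fundamental properties of the SHE field $\calZ$. First, from the chaos expansion \eqref{eq:chaos}, every term with $k \geq 1$ is a stochastic integral against white noise and hence has mean zero, leaving $\Ex[\calZ(x,s;y,t)] = p(y-x, t-s)$, the standard heat kernel. Second, for disjoint time intervals $[t_j, t_{j+1}]$, the factors $\calZ(x_j, t_j; x_{j+1}, t_{j+1})$ are measurable with respect to $\xi$ restricted to $\R \times [t_j, t_{j+1}]$ and hence mutually independent. Together these yield
\[
\Ex\Bigl[\prod_{j=0}^{k} \calZ(x_j, t_j; x_{j+1}, t_{j+1})\Bigr] = \prod_{j=0}^{k} p(x_{j+1} - x_j, t_{j+1} - t_j),
\]
which is precisely the Brownian transition density.

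It then suffices to prove \eqref{br} for cylindrical $\mathcal{L}(X) = f(X(t_1), \ldots, X(t_k))$ with $f$ bounded continuous and $0 = t_0 < t_1 < \cdots < t_k < t_{k+1} = t$. Indeed, taking $f \equiv 1$ and using $\Ex[\calZ(0,0;0,t)] = p(0,t) = (2\pi t)^{-1/2}$ shows that $\mu(A) := \Ex\bigl[\calZ(0,0;0,t)\sqrt{2\pi t}\,\Pr^\xi(X \in A)\bigr]$ is a Borel probability measure on $C([0,t])$; if $\mu$ agrees with the Brownian bridge law on every cylindrical set, then separability of $C([0,t])$ upgrades this to equality of measures, and hence of integrals against any bounded continuous $\mathcal{L}$. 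The point-to-line case proceeds analogously after verifying $\Ex[\calZ(0,0;*,t)]=1$.

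For the cylindrical case in the point-to-point setting, I would unfold \eqref{eq:cdrp} to write
\[
\calZ(0,0;0,t)\, \Ex^\xi[f(X(t_1),\ldots,X(t_k))] = \int_{\R^k} f(\vec x) \prod_{j=0}^k \calZ(x_j,t_j;x_{j+1},t_{j+1})\, d\vec x,
\]
with $x_0 = x_{k+1} = 0$, then take annealed expectation and invoke Fubini--Tonelli (justified by nonnegativity of $\calZ$) together with the two properties above to obtain $\int f(\vec x) \prod_{j=0}^k p(x_{j+1}-x_j, t_{j+1}-t_j)\, d\vec x$. Multiplying by $\sqrt{2\pi t} = 1/p(0,t)$ recovers exactly the finite-dimensional density of Brownian bridge from $0$ to $0$ at times $t_1 < \cdots < t_k$, giving the first identity in \eqref{br}. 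The point-to-line identity follows from the same computation with $x_{k+1}$ integrated over $\R$ and no $\sqrt{2\pi t}$ prefactor, producing the finite-dimensional density of standard Brownian motion started at $0$.

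The only delicate point is the Fubini swap $\Ex \int = \int \Ex$; nonnegativity of $\calZ$ together with finiteness of the limiting integral $\int \prod_j p\, d\vec x = 1$ makes this automatic. Thus the lemma reduces, after identifying the correct measure-theoretic framework, to the two basic facts about $\calZ$ stated at the outset, and I do not expect any substantive obstacle beyond careful bookkeeping of the heat-kernel normalizations.
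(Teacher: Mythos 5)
Your proposal is correct and follows essentially the same route as the paper: both arguments rest on the independence of the factors $\calZ(x_j,t_j;x_{j+1},t_{j+1})$ over disjoint time intervals together with the fact (from the chaos expansion) that each has mean equal to the heat kernel, which identifies the weighted finite-dimensional densities with those of the Brownian bridge, and then extends to general functionals. The only cosmetic difference is that you verify the point-to-line (Brownian motion) identity by the same direct computation, whereas the paper simply cites Lemma 4.2 of \cite{akq} for that half, and you are slightly more explicit about upgrading agreement on cylinder sets to equality of laws.
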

{\begin{remark}  Note that $\sqrt{2\pi t}=\frac1{p(0,t)}$ where $p(0,t)$ is the heat kernel. Since the Brownian bridge finite-dimensional densities are product of heat kernels divided by $p(0,t)$, this additional factor $\frac1{p(0,t)}$ is required in the point-to-point version for appropriate comparison to the the Brownian bridge law (see \eqref{bbd} below).
\end{remark}}
\begin{proof} Take $0=t_0<t_1<\cdots<t_{k}<t_{k+1}=t$. The Brownian motion identity appears as Lemma 4.2 in \cite{akq}. To show the bridge version note that by Definition \ref{def:cdrp}, the quantity $$\calZ(0,0;0,t)\Pr^{\xi}(X({t_1})\in \d x_1,\ldots,X({t_k})\in \d x_k)$$ is product of independent random variables with mean $p(x_{j+1}-x_j,t_{j+1}-t_{j})$ where $p(x,t)$ denotes the heat kernel. Noting that $p(0,t)=\frac1{\sqrt{2\pi t}}$, and recalling the finite-dimensional distribution of Brownian bridge (Problem 6.11 in \cite{ks}) we get that
\begin{equation}\label{bbd}
 \begin{aligned}
		\Ex\left[\calZ(0,0;0,t)\sqrt{2\pi t} \cdot\Pr^{\xi}(X({t_1})\in \d x_1,\ldots,X({t_k})\in \d x_k)\right] & =\frac1{p(0,t)}\prod_{j=0}^{k}p(x_{j+1}-x_j,t_{j+1}-t_{j}) \\ & =\Pr\left(B(t_1)\in \d x_1, \ldots, B(t_k) \in \d x_k\right).
	\end{aligned}   
\end{equation}
	\eqref{br} now follows from the above by approximation of $\mathcal{L}$ with
	simple functions.
\end{proof}

\begin{proof}[Proof of Theorem \ref{thm:ann_short}] We first show finite-dimensional convergence. Fix $0=t_0<t_1<\cdots<t_{k+1}=1$. Take $x_1,\ldots,x_k\in \R$. Set $x_0=0$ and $x_{k+1}=0$. Note that the density for $(Y_{t_i}^{(\e)})_{i=1}^k$ at $(x_i)_{i=1}^k$ is given by
	\begin{align*}
		f_{\vec{t};\e}(\vec{x}):= \frac{\e^{k/2} }{\calZ(0,0;0,\e)}\prod_{j=0}^{k}\calZ(\sqrt{\e} x_j,\e t_j;\sqrt{\e} x_{j+1},\e t_{j+1}).
	\end{align*}
	For a Brownian bridge $B$ on $[0,1]$ starting at $0$ and ending at $x$, the density for $(B_{t_i})_{i=1}^k$ at $(x_i)_{i=1}^k$ is given by
	\begin{align*}
		g_{\vec{t}}(\vec{x}):= \frac{1 }{p(0,1)}\prod_{j=0}^{k}p(x_{j+1}- x_j, t_{j+1}-t_j) 
	\end{align*}
	where $p(x,t)=\frac1{\sqrt{2\pi t}}e^{-x^2/2t}$. Using the distributional identities for $\calZ$ (see Remark \ref{re:gscal}) and using Equation (8.11) in \cite{comets} and Brownian scaling, we deduce 
	\begin{align*}
		\frac{\calZ(\sqrt{\e}x_j,\e t_j;\sqrt{\e}x_{j+1},\e t_{j+1})}{p(\sqrt{\e}(x_{j+1}-x_j),\e(t_{j+1}-t_j))}\stackrel{d}{=}\Ex_{0,0}^{t_{j+1}-t_j,x_{j+1}-x_j}\left[:\exp:\left\{\e^{1/4}\int_0^{t_{j+1}-t_j}{\xi}(s,B(s))ds\right\}\right]
	\end{align*}
	where $B$ is a Brownian bridge conditioned $B(0)=0$ and $B(t_{j+1}-t_j)=x_{j+1}-x_j$. The expectation above is taken w.r.t.~this Brownian bridge only. Here $:\exp:$ denotes the Wick exponential (see \cite{comets} for details). The right side of the above equation is a random variable (function of the noise $\xi$). We claim that this random variable converges to $1$ in probability. Indeed using chaos expansion, and Lemma 2.4 in \cite{exact}, it follows that for every fixed $t,x$ we have
	\begin{align*}
		\Ex\left[\left\{\Ex_{0,0}^{t,x}\left[:\exp:\left\{\e^{1/4}\int_0^{t}{\xi}(s,B(s))ds\right\}\right]-1\right\}^2\right]=\sqrt{\e}\sum_{k=1}^{\infty}\frac{\e^{(k-1)/2}t^{k/2}}{(4\pi)^{k/2}}\frac{(\Gamma(1/2)^k)}{\Gamma(k/2)}.
	\end{align*}
	The above sum converges. Thus as $\e\downarrow 0$, the above expression goes to zero, proving the claim. As $p(\sqrt\e x,\e t)=\e^{-1/2}p(x,t)$, we thus have $f_{\vec{t};\e}(\vec{x}) \stackrel{p}{\to} g_{\vec{t}}(\vec{x})$. Thus the quenched finite-dimensional density of $Y^{(\e)}$ converges in probability to the finite-dimensional density of the Brownian Bridge. We now show that the same holds for the annealed law. Indeed, note that $\left|g_{\vec{t}}(\vec{x})-f_{\vec{t};\e}(\vec{x})  \right|^{+}$ converges to zero in probability and is bounded above by $g_{\vec{t}}(\vec{x})$. Thus by DCT and Jensen's inequality, we obtain
	\begin{align*}
		\left|g_{\vec{t}}(\vec{x})-\Ex [f_{\vec{t};\e}(\vec{x})]\right|^{+} \le \Ex_{\xi}\left|g_{\vec{t}}(\vec{x})-f_{\vec{t};\e}(\vec{x})\right|^{+} \to 0
	\end{align*}
	as $\e \downarrow 0$. Now by Scheffe's theorem, it follows that the annealed finite-dimensional distribution of $Y^{(\e)}$ converges weakly to the finite-dimensional distribution of the Brownian bridge.
	
	Let us now verify tightness. Recall that $X(\e t) =\sqrt{\e} Y_t^{(\e)}$. Observe that by union bound followed by Markov inequality we have 
	\begin{align*}
		\Pr\left[\sup_{\substack{0\le t,s\le 1\\ |t-s|\le \delta}}|Y_t^{(\e)}-Y_s^{(\e)}|\ge \eta\right] & \le \Pr\left[\calZ(0,0;0,\e)\sqrt{2\pi\e}\sup_{\substack{0\le t,s\le 1\\ |t-s|\le \delta}}|Y_t^{(\e)}-Y_s^{(\e)}|\ge {\eta}\delta^{1/3}\right] \\ & \hspace{4cm}+\Pr\left[\calZ(0,0;0,\e)\sqrt{2\pi\e}\le \delta^{1/3}\right]\\ & \le \frac{\sqrt{2\pi}}{\eta\delta^{1/3}}\Ex\left[\calZ(0,0;0,\e)\sup_{\substack{0\le t,s\le 1\\ |t-s|\le \delta}}|X({\e t})-X({\e s})|\right] \\ & \hspace{4cm}+\Pr\left[\g_{\e}(0)\le (4\e/\pi)^{-1/4}\log (\delta^{1/3})\right].
	\end{align*}
	Note that by one-point short-time tail bounds from Proposition \ref{p:stkpzeq} \ref{p:stail}, the second expression above goes to zero as $\delta\downarrow 0$ uniformly in $\e\le 1$. For the first expression,  by Lemma \ref{l3} we have
	\begin{align*}
		\Ex\left[\calZ(0,0;0,\e)\sup_{\substack{0\le t,s\le 1\\ |t-s|\le \delta}}|X({\e t})-X({\e s})|\right]=\frac1{\sqrt{2\pi\e}}\Ex\left[\sup_{\substack{0\le t,s\le 1\\ |t-s|\le \delta}}|B_{\e t}'-B_{\e s}'|\right],
	\end{align*}
	where $B'$ is a Brownian bridge on $[0,\e]$. By scaling property of Brownian bridges we may write the last expression simply as $$\tfrac1{\sqrt{2\pi}}\Ex\left[\sup_{\substack{0\le t,s\le 1\\ |t-s|\le \delta}}|B_{t}-B_{s}-(t-s)B_1|\right]$$ where $B$ is a Brownian motion on $[0,1]$. This expression is free of $\e$ and by \cite[Lemma 1]{fis} this goes to zero with rate $O(\delta^{1/2-\gamma})$ for any $\gamma>0$.  Thus we have shown
	\begin{align*}
		\limsup_{\delta \downarrow 0}\sup_{\e\in (0,1)}\Pr\left[\sup_{\substack{0\le t,s\le 1\\ |t-s|\le \delta}}|Y_t^{(\e)}-Y_s^{(\e)}|\ge \eta\right] =0.
	\end{align*}
	Since $Y_{0}^{(\e)}=0$, by standard criterion of tightness (see Theorem 4.10 in \cite{ks}) combined with finite-dimensional convergence shown before, we have weak convergence to Brownian Bridge. This completes the proof.
\end{proof}

\begin{proof}[Proof of Theorem \ref{ltight}] Let us first prove \ref{tght} using Corollary \ref{p:amc}. Fix $\gamma\in (0,1)$. {We consider $\beta\in (0,1)$ small enough so that $\gamma \ge \rho(\beta)$ where $\rho(\beta):=\sup_{t\in (0,\beta]} t^{\frac14}\log\frac2t$}. Taking $\delta=\frac14$,  the estimates in \eqref{e:lamc} ensure that for all $\e\in (0,1)$ we have
	\begin{align*}
		\Pr\left(\sup_{t\neq s\in [0,1], |t-s|<\beta} |L_s^{(\e)}-L_t^{(\e)}|\ge \gamma\right) & \le \Pr\left(\sup_{t\neq s\in [0,1], |t-s|<\beta} \frac{|L_s^{(\e)}-L_t^{(\e)}|}{|t-s|^{\frac14}\log\frac{2}{|t-s|}}\ge \frac{\gamma}{\rho(\beta)}\right) \\ & \le\Con \exp\left(-\tfrac1{\Con}\tfrac{\gamma^2}{\rho(\beta)^2}\right).
	\end{align*}
	Note that as $\beta \downarrow 0$, we have $\rho(\beta)\downarrow 0$. Hence
	\begin{align*}
		\limsup_{\beta\downarrow 0}\sup_{\e\in (0,1)}\Pr\left(\sup_{t\neq s\in [0,1], |t-s|<\beta} |L_s^{(\e)}-L_t^{(\e)}|\ge \gamma\right)=0.
	\end{align*}
	Since $L_0^{(\e)}=0$, the above modulus of continuity estimate yields tightness for the process $L_t^{(\e)}$.
	
	\medskip
	
	\noindent For \ref{pt}, let us fix $t\in (0,1)$ and consider $V\sim \cdrp(0,0;0,\e^{-1})$. Let $\mathcal{M}_{t,\e^{-1}}$ denote the unique mode of the quenched density of $V(\e^{-1}t)$. By \cite[Theorem 1.4]{dz22}, we know $\mathcal{M}_{t,\e^{-1}}$ exists uniquely {almost surely}. By \cite[Corollary 7.3]{dz22} we have
	\begin{align*}
		\limsup_{K\to\infty}\limsup_{\e\downarrow 0} \Pr^{\xi}(|V(\e^{-1}t)-\mathcal{M}_{t,\e^{-1}}|\ge K)=0, \mbox{ in probability}.
	\end{align*}
	Applying reverse Fatou's Lemma we have
	\begin{align*}
		\limsup_{K\to\infty}\limsup_{\e\downarrow 0} \Pr(|V(\e^{-1}t)-\mathcal{M}_{t,\e^{-1}}|\ge K)=0.
	\end{align*}
	Thus in particular, $\e^{-\frac23}[V(\e^{-1}t)-\mathcal{M}_{t,\e^{-1}}] \stackrel{p}{\to} 0$. However,  $\e^{-2/3}\mathcal{M}_{t,\e^{-1}}\stackrel{d}{\to} \Gamma(t\sqrt{2})$ due to \cite[Theorem 1.8]{dz22}. This proves \ref{pt}.
\end{proof}

\begin{proof}[Proof of Theorem \ref{ltight.ptl}] Let us first prove part \ref{stght} {which claims short-time process convergence. We first show finite-dimensional convergence. Fix $0=t_0<t_1<\cdots<t_{k+1}=1$. Take $x_1,\ldots,x_k\in \R$. Set $x_0=0$ and $x_{k+1}=*$. Note that the density for $(Y_{*}^{(\e)}(t_i))_{i=1}^k$ at $(x_i)_{i=1}^k$ is given by
	\begin{align*}
		f_{\vec{t};\e}^*(\vec{x}):= \frac{\e^{k/2} }{\calZ(0,0;*,\e)}\prod_{j=0}^{k}\calZ(\sqrt{\e} x_j,\e t_j;\sqrt{\e} x_{j+1},\e t_{j+1}).
	\end{align*}
From the finite-dimensional convergence argument in proof of Theorem \ref{thm:ann_short} we know that
\begin{align} \label{tos}
    \e^{k/2}\prod_{j=0}^{k-1}\calZ(\sqrt{\e} x_j,\e t_j;\sqrt{\e} x_{j+1},\e t_{j+1}) \stackrel{p}{\to} \prod_{j=0}^{k-1} p(x_{j+1}-x_j,t_{j+1}-t_j)=:g_{\vec{t}}^*(\vec{x}).
\end{align}
Note that $g_{\vec{t}}^*(\vec{x})$ is the finite-dimensional density for the standard Brownian motion. We now claim that
\begin{align}\label{tosh}
    \mathcal{Z}(0,0;*,\e) \stackrel{p}{\to} 1, \quad \mathcal{Z}(\sqrt{\e}x_{k-1},\e t_{k-1}; *,{\e} t_k) \stackrel{p}{\to} 1.
\end{align}
Combining \eqref{tos} and \eqref{tosh} we have that 
	 $f_{\vec{t};\e}^*(\vec{x}) \stackrel{p}{\to} g_{\vec{t}}^*(\vec{x})$ which implies quenched finite-dimensional density convergence. This convergence can then be upgraded to annealed finite-dimensional density convergence by the same argument of the proof of Theorem \ref{thm:ann_short}.}
	 
	 {We thus focus on proving \eqref{tosh}. To prove the first part of \eqref{tosh} we utilize the short-time scaling from \eqref{eq:kpzscal} to get
\begin{align}\label{tosh1}
    \calZ(0,0;*,\e) = \int_{\R} e^{\calH(x,t)}dx = \frac1{\sqrt{2\pi \e}} \int_{\R} \exp\left((\tfrac{\pi \e}{4})^{1/4}\g_{\e}\big(x\sqrt{\tfrac{4}{\pi \e}}\big)\right)dx.
\end{align}
Fix any $\nu\in (0,1)$. Applying \cite[Proposition 4.4]{dg} (with $s=\e^{-\frac16}$) we get that with probability at least $1-\Con \exp(-\frac1\Con \e^{-\frac14})$
\begin{align}
    -\frac{(\pi \e/4)^{3/4}(1+\nu)x^2}{2\e}-\e^{-1/6} \le \g_{\e}\big(x\big) \le -\frac{(\pi \e/4)^{3/4}(1-\nu)x^2}{2\e}+\e^{-1/6}, \ \ \mbox{for all }x\in \R,
\end{align}
where the constant $\Con$ depends on $\nu$.
Inserting the above inequality in \eqref{tosh1} we get that with probability at least $1-\Con \exp(-\frac1\Con \e^{-\frac14})$
$$\exp\left(-(\tfrac{\pi}4)^{1/4}\e^{1/12}\right) \frac1{\sqrt{2\pi \e}} \int_{\R} e^{-\frac{(1+\nu)x^2}{2\e}}dx \le \mathcal{Z}(0,0;*,\e) \le  \exp\left((\tfrac{\pi}4)^{1/4}\e^{1/12}\right) \frac1{\sqrt{2\pi \e}} \int_{\R} e^{-\frac{(1-\nu)x^2}{2\e}}dx.$$
Thus
$$\Pr\left(\frac{\exp\left(-(\tfrac{\pi}4)^{1/4}\e^{1/12}\right)}{\sqrt{1+\nu}} \le \mathcal{Z}(0,0;*,\e) \le   \frac{\exp\left((\tfrac{\pi}4)^{1/4}\e^{1/12}\right)}{\sqrt{1-\nu}}\right) \ge 1-\Con \exp(-\tfrac1\Con \e^{-\frac14}),$$
which implies
$$\limsup_{\e\to \infty}\Pr\left(\tfrac{1}{\sqrt{1+\nu}} \le \mathcal{Z}(0,0;*,\e) \le   \tfrac{1}{\sqrt{1-\nu}}\right)=1.$$
Taking $\nu\downarrow 0$, we get the first part of \eqref{tosh}. The second part follows analogously.} 
	 
	 {Let us now verify tightness.} Observe that by union bound followed by Markov inequality we have 
	\begin{equation}\label{sht}
		\begin{aligned}
			\Pr\left[\sup_{\substack{0\le t,s\le 1\\ |t-s|\le \delta}}|Y_{*}^{(\e)}(t)-Y_{*}^{(\e)}(t)|\ge \eta\right] & \le \Pr\left[\calZ(0,0;*,\e)\sup_{\substack{0\le t,s\le 1\\ |t-s|\le \delta}}|Y_{*}^{(\e)}(t)-Y_{*}^{(\e)}(s)|\ge {\eta}\delta^{1/3}\right] \\ & \hspace{4cm}+\Pr\left[\calZ(0,0;*,\e)\le \delta^{1/3}\right]\\ & \le \frac{1}{\eta\delta^{1/3}}\Ex\left[\calZ(0,0;*,\e)\frac1{\sqrt\e}\sup_{\substack{0\le t,s\le 1\\ |t-s|\le \delta}}|X({\e t})-X({\e s})|\right] \\ & \hspace{4cm}+\Pr\left[\g_{\e}(*)\le \e^{-1/4}\log (\delta^{1/3})\right],
		\end{aligned}
	\end{equation}
	where
	\begin{align*}
		\g_{\e}(*) & :=\e^{-1/4}\log\calZ(0,0;*,{\e})  \\ & =\e^{-1/4}\left[-\log\sqrt{2\pi \e}+\log \int_{\R} \exp\left((\tfrac{\pi \e}4)^{1/4}\g_{\e}(\sqrt{\tfrac{4}{\pi \e}}x)\right) \d x\right]
	\end{align*}
	with $\g_{\e}(x)$ defined in \eqref{eq:kpzscal}. Let us now bound each term in the r.h.s.~of \eqref{sht} separately. For the second term we claim that
	\begin{align}\label{gstar}
		\limsup_{\delta\downarrow 0}\sup_{\e\in (0,1)}\Pr\left[\g_{\e}(*)\le \e^{-1/4}\log (\delta^{1/3})\right] =0.
	\end{align}
	Note that by Proposition 4.4 in \cite{dg} (the infimum process bound with $\nu=1$) we have for any $s>0$ with probability at least $1-\Con\exp(-\frac1\Con s^{3/2})$,
	\begin{align*}
		(\tfrac{\pi \e}4)^{1/4}\g_{\e}(\sqrt{\tfrac{4}{\pi \e}}x) & \ge -(\tfrac{\pi \e}4)^{1/4}\left[s+(\tfrac{\pi \e}{4})^{3/4}\cdot \tfrac{4}{\pi \e} \tfrac{x^2}{\e}\right] = -(\tfrac{\pi \e}4)^{1/4}s-\tfrac{x^2}{\e}, \mbox{ for all } x\in \R.
	\end{align*}
	Thus, with probability at least $1-\Con\exp(-\frac1\Con s^{3/2})$, \begin{align*}
		\g_{\e}(*) & \ge \e^{-1/4}\left[-\log\sqrt{2\pi \e}+\log\left(\int_{\R} \exp\left(-(\tfrac{\pi \e}4)^{1/4}s-\tfrac{x^2}{\e}\right)\d x\right)\right] \\ & = \e^{-1/4}\left[-\log\sqrt{2\pi \e}+\log \left( \sqrt{\pi \e} \exp\left(-(\tfrac{\pi \e}4)^{1/4}s\right)\right)\right] \\ &  =\e^{-1/4}\left[-\log\sqrt{2}-(\tfrac{\pi \e}4)^{1/4}s\right] \ge -s-\e^{-1/4}\log 2.
	\end{align*}
	Now we take $s=-\e^{-1/4}\log(2\delta^{1/6})$ which is positive for $\delta$ small enough. Then $-s-\e^{-1/4}\log 2 =\frac12\e^{-1/4}\log(\delta^{1/3})>\e^{-1/4}\log(\delta^{1/3})$. Hence uniformly in all $\e\in (0,1)$, with probability at least $1-\Con\exp(-\frac1{\Con}[-\log(2\delta^{1/6})]^{3/2})$, we have $\g_{\e}(*)\ge \e^{-1/4}\log(\delta^{1/3})$. This verifies \eqref{gstar}. 
	
	Next for the first expression on r.h.s.~of \eqref{sht}, by  Lemma \ref{l3} we have
	\begin{align*}
		\Ex\left[\calZ(0,0;*,\e)\frac1{\sqrt\e}\sup_{\substack{0\le t,s\le 1\\ |t-s|\le \delta}}|X({\e t})-X({\e s})|\right]=\frac1{\sqrt\e}\Ex\left[\sup_{\substack{0\le t,s\le 1\\ |t-s|\le \delta}}|B_{\e t}'-B_{\e s}'|\right],
	\end{align*}
	where $B'$ is a Brownian motion on $[0,\e]$. By scaling property of Brownian motion we may write the last expression simply as $$\Ex\left[\sup_{\substack{0\le t,s\le 1\\ |t-s|\le \delta}}|B_{t}-B_{s}|\right]$$ where $B$ is a Brownian motion on $[0,1]$. This expression is free of $\e$ and by \cite[Lemma 1]{fis} this goes to zero with rate $O(\delta^{1/2-\gamma})$ for any $\gamma>0$.  Thus we have shown
	\begin{align*}
		\limsup_{\delta \downarrow 0}\sup_{\e\in (0,1)}\Pr\left[\sup_{\substack{0\le t,s\le 1\\ |t-s|\le \delta}}|Y_*^{(\e)}(t)-Y_*^{(\e)}(s)|\ge \eta\right] =0.
	\end{align*}
	Since $Y_{*}^{(\e)}(0)=0$, {this proves tightness. Along with finite-dimensional convergence, this establishes part \ref{stght}}.
	
	\medskip
	
	The tightness results in part \ref{ltght} follows via the same arguments as in the proof of Theorem \ref{ltight} \ref{tght} utilizing the point-to-line modulus of continuity from Proposition \ref{p:qmc.ptl}. For part \ref{ptl}, we rely on localization results from \cite{dz22}. Indeed, by Theorem 1.5 in \cite{dz22}, we know the quenched density of $V(\e^{-1})$ (recall $V\sim \cdrp(0,0;*,\e^{-1})$) has a unique mode $\mathcal{M}_{*,\e^{-1}}$ almost surely. By the same argument as in the proof of Theorem \ref{ltight} \ref{pt}, the point-to-line version of Corollary 7.3 in \cite{dz22} leads to the fact that $\e^{-2/3}[L_{*}^{(\e)}(1)-\mathcal{M}_{*,\e^{-1}}] \stackrel{p}{\to} 0$. Finally from Theorem 1.8 in \cite{dz22} we have $\e^{-2/3}\mathcal{M}_{*,\e^{-1}} \stackrel{d}{\to} 2^{1/3}\mathcal{M}$. This establishes \ref{ptl}.
\end{proof}

\subsection{Proof of Theorem \ref{thm:ann_long_pr} modulo Conjecture \ref{conj:sheet}} \label{sec4.2}

In this section we prove Theorem \ref{thm:ann_long_pr} assuming Conjecture \ref{conj:sheet}. The proof also relies on a technical result which we first state below.

\begin{lemma}[Deterministic convergence]\label{l5}
	Let $f(x): \R^k \rightarrow \R$ be a continuous function with a unique maximizer $\vec{a}\in \R^k$ and $f_{\e}(x): \R^k \rightarrow \R$ be a sequence of continuous functions that converges to $f(x)$ uniformly over compact subsets. Fix any $\delta>0$ and take $M>0$ so that $(a_i-\delta,a_i+\delta)\in [-M,M]$ for all $i$. For $x\in \R$, set $$g_{\e}(x) := \frac{\exp(\e^{-\frac{1}{3}}f_{\e}(\vec{x}))}{\int_{[-M,M]^k} \exp(\e^{-\frac{1}{3}}f_{\e}(\vec{y}))\d \vec{y}}.$$ For all $\vec{b} \in [-M,M]^k$, we have:
	
	\begin{equation}\label{limsup}
		\limsup_{\e \downarrow 0}\int_{-M}^{b_1} \cdots \int_{-M}^{b_k} g_{\e}(\vec{x})\d \vec{x}  \le \prod_{i=1}^k\ind\{ a_i\le b_i + \delta\},
	\end{equation}
	\begin{equation}\label{liminf}
		\liminf_{\e \downarrow 0}\int_{-M}^{b_1} \cdots \int_{-M}^{b_k} g_{\e}(\vec{x})\d \vec{x}  \ge \prod_{i=1}^k\ind\{ a_i\le b_i -\delta\}.
	\end{equation}
\end{lemma}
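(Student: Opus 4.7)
The plan is to recognize Lemma \ref{l5} as a statement of Laplace's method at temperature $\e^{1/3}$: as $\e\downarrow 0$, the probability density $g_\e$ on the compact box $K := [-M,M]^k$ concentrates at the unique maximizer $\vec{a}$ of $f$. The $\delta$-margin on both sides exists precisely to absorb the concentration scale. I would reduce the lemma to a single claim: for any closed box $B \subset K$ that is disjoint from some small open neighborhood $U_\eta := \prod_i (a_i-\eta,a_i+\eta)$ of $\vec{a}$, one has $\int_B g_\e(\vec{x})\,d\vec{x} \to 0$. The upper bound \eqref{limsup} follows immediately by taking $B = \prod_i [-M,b_i]$: if the right-hand side of \eqref{limsup} is zero, then some $a_i > b_i + \delta$, so choosing $\eta \in (0,\delta)$ ensures $B \cap U_\eta = \emptyset$. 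The lower bound \eqref{liminf} follows by complementation: if all $a_i \le b_i - \delta$, then $U_\eta \subset B$ for $\eta < \delta$, so $\int_B g_\e = 1 - \int_{K\setminus B} g_\e$, and the complement $K \setminus B$ decomposes into finitely many closed sub-boxes each disjoint from $U_\eta$.

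To prove the reduced claim, I would exploit the uniqueness of $\vec{a}$ together with compactness to obtain a definite gap $c := f(\vec a) - \sup_{K \setminus U_\eta} f > 0$. Then by continuity of $f$ at $\vec{a}$ pick $\eta_0 \in (0,\eta)$ such that $|f(\vec y) - f(\vec a)| < c/4$ on $U_{\eta_0}$. Using the hypothesized uniform convergence $f_\e \to f$ on the compact $K$, for all $\e$ sufficiently small we have $\|f_\e - f\|_{L^\infty(K)} < c/4$, and hence
\[
f_\e \ge f(\vec a) - \tfrac{c}{2} \text{ on } U_{\eta_0}, \qquad f_\e \le f(\vec a) - \tfrac{3c}{4} \text{ on } K \setminus U_\eta.
\]

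The final step is to compare the two elementary bounds
\[
\int_K \exp(\e^{-1/3} f_\e(\vec y))\,d\vec y \ge (2\eta_0)^k \exp\!\bigl(\e^{-1/3}(f(\vec a) - \tfrac{c}{2})\bigr),
\]
\[
\int_B \exp(\e^{-1/3} f_\e(\vec x))\,d\vec x \le (2M)^k \exp\!\bigl(\e^{-1/3}(f(\vec a) - \tfrac{3c}{4})\bigr),
\]
whose ratio is at most $(M/\eta_0)^k \exp(-\e^{-1/3} c/4) \to 0$. This establishes the reduced claim and completes both \eqref{limsup} and \eqref{liminf}.

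There is no substantive obstacle here beyond bookkeeping; the only subtlety is that the spread of $g_\e$ around the maximizer is governed by the temperature $\e^{1/3}$ and by the (unknown) smoothness of $f$ near $\vec a$, so the argument must be carried out without quantitative second-order information on $f$. This is precisely why the margin $\pm\delta$ appears in the statement rather than sharp equality to $\prod_i \ind\{a_i \le b_i\}$, and it is the reason the continuity/uniform-convergence form above is sufficient: one never needs to resolve the density on scales finer than $\delta$.
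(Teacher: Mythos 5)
Your proof is correct and follows essentially the same Laplace-method argument as the paper: a spectral-gap-type bound $f(\vec a)-\sup_{K\setminus U_\eta}f>0$ from uniqueness and compactness, combined with uniform convergence of $f_\e$ and continuity of $f$ near $\vec a$, yielding a ratio of exponentials that vanishes as $\e\downarrow 0$. Your packaging (a single ``mass outside a neighborhood of $\vec a$ vanishes'' claim, with the liminf obtained by covering the complement by finitely many boxes) is just a tidier organization of the paper's limsup argument and its ``analogous'' liminf, not a different route.
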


Proof of this lemma follows via standard real analysis and hence we defer its proof to the end of this section. We now proceed to prove Theorem \ref{thm:ann_long_pr} assuming the above lemma.

\begin{proof}[Proof of Theorem \ref{thm:ann_long_pr}] For clarity we split the proof into three steps.
	
	\medskip
	
	\noindent\textbf{Step 1.} Fix $0= t_0 < t_1 < \ldots < t_k < t_{k+1} = 1$. For convenience set $\Gamma_{t_i}:=\Gamma(t_i\sqrt{2})$ where $\Gamma(\cdot)$ is the geodesic of directed landscape from $(0,0)$ to $(0,\sqrt2)$. Consider any $\vec{a} = (a_1, \ldots, a_k) \in \R^k,$ which is a continuity point for the CDF of $(\Gamma_{t_i})_{i=1}^k$. For any $M\ge \sup_{i} |a_i|+1$, define \begin{align}
		\label{va}
		V_{\vec{a}}(M):=[-M,a_i]\times \cdots \times [-M,a_k]\subset \R^k.
	\end{align}
	To show convergence in finite-dimensional distribution, it suffices to prove that as $\e\downarrow 0$
	\begin{align}\label{fddcv}
		\Pr\left(\bigcap_{i=1}^k \{L_{t_i}^{(\e)}\le a_i\}\right)  \to \Pr\left(\bigcap_{i=1}^k \{\Gamma_{t_i}\le a_i\}\right).
	\end{align}
	From Definition \ref{def:cdrp} and using the long-time scaling from \eqref{eq:kpzscal}, we obtain that the joint density of $(L_{t_1}^{(\e)}, L_{t_2}^{(\e)}, \ldots,L_{t_k}^{(\e)}) $ at $(x_i)_{i=1}^k$ is given by $$\frac{g_{\vec{t};\e}(\vec{x})}{\int_{\R^k} g_{\vec{t};\e}(\vec{y}) d\vec{y}}, \qquad g_{\vec{t};\e}(\vec{x}):=\exp(\e^{-1/3} U_{\vec{t};\e}(\vec{x}))$$
	where
	\begin{align}\label{ftex}
	    U_{\vec{t};\e}(\vec{x}) : = \sum_{i=1}^{k+1}(t_i - t_{i-1})^{1/3}\h_{\e^{-1}t_{i-1}, \e^{-1}t_{i}}((t_i - t_{i-1})^{-2/3}x_{i-1}, (t_i - t_{i-1})^{-2/3} x_{i})
	\end{align}
	Here $x_0 =x_{k+1} = 1$.

	In this step, we reduce our computation to understanding the integral behavior of $g_{\vec{t};\e}$ on a compact set. More precisely, the goal of this step is to show there there exists a constant $\Con>0$ such that for all $M$ large enough
	\begin{align} \label{step1}
		\left|\Pr\left(\bigcap_{i=1}^k \{L_{t_i}^{(\e)}\le a_i\}\right)-\Ex\left[\frac{\int_{V_{\vec{a}}(M)} g_{\vec{t};\e}(\vec{y})d\vec{y}}{\int_{[-M,M]^k} g_{\vec{t};\e}(\vec{y})d\vec{y}} \right]\right|\le \Con\exp\left(-\tfrac1\Con M^2\right)
	\end{align}
	where $V_{\vec{a}}(M)$ is defined in \eqref{va}. We proceed to prove \eqref{step1} by demonstrating appropriate lower and upper bounds. For upper bound observe that by union bound we have
	\begin{align} \notag
		\Pr\left(\bigcap_{i=1}^k \{L_{t_i}^{(\e)}\le a_i\}\right) & \le \Pr\left(\bigcap_{i=1}^k \{L_{t_i}^{(\e)}\in [-M,a_i]\}\right)+\Pr\left(\sup_{t\in [0,1]} |L_{t}^{(\e)}|\ge M\right) \\ & \le \Ex\left[\frac{\int_{V_{\vec{a}}(M)} g_{\vec{t};\e}(\vec{y})d\vec{y}}{\int_{[-M,M]^k} g_{\vec{t};\e}(\vec{y})d\vec{y}} \right]+\Pr\left(\sup_{t\in [0,1]} |L_{t}^{(\e)}|\ge M\right) \notag
		\\ & \le \Ex\left[\frac{\int_{V_{\vec{a}}(M)} g_{\vec{t};\e}(\vec{y})d\vec{y}}{\int_{[-M,M]^k} g_{\vec{t};\e}(\vec{y})d\vec{y}} \right]+\Con \exp\left(-\tfrac1\Con M^2\right) \label{up1}
	\end{align}
	where the last inequality follows from Corollary \ref{l:lqmcc1} for some constant $\Con>0$. For the lower bound we have
	\begin{align} \notag
		\Pr\left(\bigcap_{i=1}^k \{L_{t_i}^{(\e)}\le a_i\}\right) & \ge \Pr\left(\bigcap_{i=1}^k \{L_{t_i}^{(\e)}\in [-M,a_i]\}\right) \\ & = \Ex\left[\frac{\int_{V_{\vec{a}}(M)} g_{\vec{t};\e}(\vec{y})d\vec{y}}{\int_{[-M,M]^k} g_{\vec{t};\e}(\vec{y})d\vec{y}}\cdot \frac{\int_{[-M,M]^k} g_{\vec{t};\e}(\vec{y})d\vec{y}}{\int_{\R^k} g_{\vec{t};\e}(\vec{y})d\vec{y}} \right] \notag \\ & \ge \Ex\left[\frac{\int_{V_{\vec{a}}(M)} g_{\vec{t};\e}(\vec{y})d\vec{y}}{\int_{[-M,M]^k} g_{\vec{t};\e}(\vec{y})d\vec{y}}\cdot \Pr^{\xi}\left(\sup_{t\in [0,1]}|L_t^{(\e)}|\le M\right) \right]. \label{e1}
	\end{align}
	By Corollary \ref{l:lqmcc1} we see that there exist two constants $\Con_1,\Con_2>0$ such that with probability at least $1-\Con_2\exp(-\frac1{\Con_2}M^3)$, the random variable $\Pr^{\xi}\left(\sup_{t\in [0,1]}|L_t^{(\e)}|\le M\right)$ is at least $1-\Con_1\exp(-\frac1{\Con_1}M^2)$. Thus,
	\begin{align} \notag
		\mbox{r.h.s.~of \eqref{e1}} & \ge \left[1-\Con_2\exp\left(-\tfrac1{\Con_2}M^3\right)\right]\Ex\left[\frac{\int_{V_{\vec{a}}(M)} g_{\vec{t};\e}(\vec{y})d\vec{y}}{\int_{[-M,M]^k} g_{\vec{t};\e}(\vec{y})d\vec{y}}\cdot \left[1-\Con_1\exp\left(-\tfrac1{\Con_1}M^2\right)\right]\right] \\ & \ge \Ex\left[\frac{\int_{V_{\vec{a}}(M)} g_{\vec{t};\e}(\vec{y})d\vec{y}}{\int_{[-M,M]^k} g_{\vec{t};\e}(\vec{y})d\vec{y}}\right]-\Con_1\exp\left(-\tfrac1{\Con_1}M^2\right). \label{lw1}
	\end{align}
	In view of \eqref{up1} and \eqref{lw1}, we thus arrive at \eqref{step1} by adjusting the constants. This completes our work for this step.
	
	\medskip
	
	\noindent\textbf{Step 2.} In this step, we discuss how directed landscape and hence the geodesic appear in the limit. {Recall the random function $U_{\vec{t};\varepsilon}(\vec{x})$ from \eqref{ftex}}. We exploit Conjecture \ref{conj:sheet}, to show that as $\e\downarrow 0$, as $\R^k$-valued processes we have the following convergence in law 
	\begin{align}\label{conv}
		U_{\vec{t};\e}(\vec{x}) \stackrel{d}{\to} \mathbf{U}_{\vec{t}}(\vec{x}):=2^{-\frac13}\sum_{i=1}^{k+1}\mathcal{L}(x_{i-1},t_{i-1}\sqrt{2};x_{i},t_{i}\sqrt{2})
	\end{align}
	in the uniform-on-compact topology. Here $\mathcal{L}(x,s;y,t)$ denotes the directed landscape. Note that by Definition \ref{def:geo}, $(\Gamma_{t_i})_{i=1}^k$ is precisely the almost sure unique $k$-point maximizer of $\mathbf{f}_{\vec{t}}(\vec{x})$.
	
	To show \eqref{conv}, we rely on Conjecture \ref{conj:sheet} heavily. Indeed, assuming Conjecture \ref{conj:sheet}, for each $i$, as $\e\downarrow 0$  we have
	\begin{align*}
		& \h_{\e^{-1}t_{i-1}, \e^{-1}t_{i}}((t_i - t_{i-1})^{-2/3}x, (t_i - t_{i-1})^{-2/3} y) \\ & \hspace{2cm}\stackrel{d}{\to} 2^{-1/3}\mathcal{S}^{(i)}\big(2^{-1/3}(t_i - t_{i-1})^{-2/3}x,2^{-1/3}(t_i - t_{i-1})^{-2/3}y\big)
	\end{align*}
	where the convergence holds under the uniform-on-compact topology. Here $\mathcal{S}^{(i)}$ are independent Airy sheets as $\h_{\e^{-1}t_{i-1},\e^{-1}t_i}(\cdot,\cdot)$ are independent. Now by the definition of directed landscape we have
	\begin{align*}
		& 2^{-\frac13}\sum_{i=1}^{k+1}\mathcal{L}(x_{i-1},t_{i-1}\sqrt{2};x_{i},t_{i}\sqrt{2}) \\ & \hspace{2cm} \stackrel{d}{=} 2^{-\frac13}\sum_{i=1}^{k+1}(t_{i+1}-t_i)^{1/3}\mathcal{S}^{(i)}\big(2^{-1/3}(t_i - t_{i-1})^{-2/3}x_{i-1},2^{-1/3}(t_i - t_{i-1})^{-2/3}x_i\big)
	\end{align*}
	with $x_0=x_{k+1}=1$. Here the equality in distribution holds as $\R^k$-valued processes in $\vec{x}$.  This allow us to conclude the desired convergence for $U_{\vec{t};\e}(\vec{x})$ in \eqref{conv}, completing our work for this step.
	
	\medskip
	
	\noindent\textbf{Step 3.} In this step, we complete the proof of \eqref{fddcv} utilizing \eqref{step1} and the weak convergence in \eqref{conv}.  Using Skorokhod's representation theorem, given any fixed $M$, we may assume that we are working on a probability space where
	\begin{align*}
		\Pr(\m{A})=1, \quad \mbox{for } \ \m{A}: = \bigg\{\sup_{\vec{x}\in [-M,M]^k} \left|U_{\vec{t};\e}(\vec{x})-\mathbf{U}_{\vec{t}}(\vec{x})\right| \to 0\bigg\}.
	\end{align*}
	Let us define
	\begin{align*}
		(\Gamma_{t_i}(M))_{i=1}^k:=\underset{\vec{x}\in [-M,M]^k}{\operatorname{argmax}} \mathbf{f}_{\vec{t}}(\vec{x}),
	\end{align*}
	where in case there are multiple maximizers we take the one whose sum of coordinates is the largest. We next define
	\begin{align*}
		\m{B}: = \bigg\{\underset{\vec{x}\in [-M,M]^k}{\operatorname{argmax}} \mathbf{U}_{\vec{t}}(\vec{x}) \mbox{ exists uniquely and } (\Gamma_{t_i}(M))_{i=1}^k \in [-\tfrac{M}2,\tfrac{M}2]^k\bigg\}.
	\end{align*} 
	Fix any $\delta\in (0,\frac{M}{2})$. By Lemma \ref{l5} we have
	\begin{equation} \label{up2}
		\begin{aligned}
			\limsup_{\e\downarrow 0}\Ex\left[\frac{\int_{V_{\vec{a}}(M)} g_{\vec{t};\e}(\vec{y})d\vec{y}}{\int_{[-M,M]^k} g_{\vec{t};\e}(\vec{y})d\vec{y}} \right] & \le \Pr(\neg\m{B})+\Ex\left[\limsup_{\e\downarrow 0}\frac{\int_{V_{\vec{a}}(M)} g_{\vec{t};\e}(\vec{y})d\vec{y}}{\int_{[-M,M]^k} g_{\vec{t};\e}(\vec{y})d\vec{y}}\ind\{\m{A}\cap\m{B}\}\right] \\ & \le \Pr(\neg \m{B})+\Pr\left(\bigcap_{i=1}^k \{\Gamma_{t_i}(M) \le a_i+\delta\}\right)  \\ & \le \Pr(\neg \m{B})+\Pr\left(\bigcap_{i=1}^k \{\Gamma_{t_i} \le a_i+\delta\}\right)+\Pr\left(\sup_{t\in [0,1]} |\Gamma_{t}|\ge M\right),
		\end{aligned}
	\end{equation}
	where the last inequality follows by observing that $\Gamma_{t_i}(M)=\Gamma_{t_i}$ for all $i$, whenever $\sup_{t\in [0,1]} |\Gamma_{t}| \le M$ (and the fact that $\Gamma(\cdot)$ exists uniquely almost surely via Theorem 12.1 in \cite{dov}). In the same manner we have
	\begin{equation}\label{lw2}
		\begin{aligned}
			\liminf_{\e\downarrow 0}\Ex\left[\frac{\int_{V_{\vec{a}}(M)} g_{\vec{t};\e}(\vec{y})d\vec{y}}{\int_{[-M,M]^k} g_{\vec{t};\e}(\vec{y})d\vec{y}} \right] & \ge \Ex\left[\liminf_{\e\downarrow 0}\frac{\int_{V_{\vec{a}}(M)} g_{\vec{t};\e}(\vec{y})d\vec{y}}{\int_{[-M,M]^k} g_{\vec{t};\e}(\vec{y})d\vec{y}}\ind\{\m{A}\cap\m{B}\}\right] \\ & \ge \Pr\left(\bigcap_{i=1}^k \{\Gamma_{t_i}(M) \le a_i-\delta\}, \m{A}\cap\m{B}\right)  \\ & \ge \Pr\left(\bigcap_{i=1}^k \{\Gamma_{t_i} \le a_i-\delta\}\right)-\Pr(\neg\m{B})-\Pr\left(\sup_{t\in [0,1]} |\Gamma_{t}|\ge M\right).
		\end{aligned}
	\end{equation}
	By Proposition 12.3 in \cite{dov}, $$\Pr(\neg \m{B}) \le \Pr\left(\sup_{t\in [0,1]} |\Gamma_{t}|\ge M\right)\le \Con\exp\left(-\tfrac1\Con M^3\right).$$
	
	Thus taking $M\uparrow \infty$, followed by $\delta\downarrow 0$, and using the fact that $\vec{a}$ is a continuity point of the density on both sides of \eqref{up2} and \eqref{lw2} we have
	\begin{align*}
		\lim_{M\to \infty}\limsup_{\e\downarrow 0} \Ex\left[\frac{\int_{V_{\vec{a}}(M)} g_{\vec{t};\e}(\vec{y})d\vec{y}}{\int_{[-M,M]^k} g_{\vec{t};\e}(\vec{y})d\vec{y}} \right] = \lim_{M\to \infty}\liminf_{\e\downarrow 0} \Ex\left[\frac{\int_{V_{\vec{a}}(M)} g_{\vec{t};\e}(\vec{y})d\vec{y}}{\int_{[-M,M]^k} g_{\vec{t};\e}(\vec{y})d\vec{y}} \right]= \Pr\left(\bigcap_{i=1}^k \{\Gamma_{t_i} \le a_i\}\right)
	\end{align*}
	Combining this with \eqref{step1} we thus arrive at \eqref{fddcv}. This completes the proof.
\end{proof}
\begin{proof}[Proof of Lemma \ref{l5}]
	We begin by proving \eqref{limsup}. When $a_i \le b_i + \delta$ for all $i$, the r.h.s of \eqref{limsup} is 1 whereas the l.h.s of \eqref{limsup} is always less than 1. Thus we focus on when $a_j>b_j+\delta$ for some $j$. In that case $\vec{a} \notin [-M,b_1]\times \cdots \times [-M,b_k]$. As $\vec{a}$ is the unique maximizer of the continuous function $f(\vec{x})$, there exists $\eta>0$ such that
	$$\sup_{y_i \in [-M, b_i], i=1,2,\ldots,k}f(\vec{y}) < f(\vec{a})-\eta.$$
	By uniform convergence over compacts, we can get $\e_0$ such that 
	\begin{align*}
		\sup_{\e \le \e_0}\sup_{\vec{x} \in [-M, M]^k}|f_{\e}(\vec{x})-f(\vec{x})| < \tfrac14\eta.
	\end{align*}
	By continuity of $f$ at $\vec{a}$, we can get $\delta_0<\delta$ such that for all $0\le \rho\le \delta$ we have
	\begin{align*}
		\sup_{x_i \in [a_i-\rho,a_i+\rho], i=1,\ldots,k}|f(\vec{x})-f(\vec{a})| < \tfrac14\eta.
	\end{align*}
	Thus for all $\e\le \e_0$ and $0\le \rho\le \delta_0$ we have $f_{\e}(\vec{x})\ge f(\vec{a})-\tfrac12\eta$ for all $\vec{x}$ with $x_i\in [a_i-\rho,a_i+\rho]$. And for all $\e\le \e_0$, $f_{\e}(\vec{y})<f(\vec{a})-\tfrac34\eta$ for all $\vec{y}$ with $y_i\in [-M,b_i]$. Thus in conclusion
	\begin{align*}
		\int_{-M}^{b_1} \cdots \int_{-M}^{b_k} \exp(\e^{-\frac13}f_{\e}(\vec{x}))\d \vec{x} \le (2M)^k\exp(\e^{-1/3}[f(\vec{a})-\tfrac34\eta)])
	\end{align*}
	and
	\begin{align*}
		\int_{[-M,M]^k} \exp(\e^{-\frac13}f_{\e}(\vec{x}))\d \vec{x} \ge \int_{a_1-\delta_0}^{a_1+\delta_0}\cdots \int_{a_k-\delta_0}^{a_k+\delta_0} \exp(\e^{-\frac13}f_{\e}(\vec{x}))\d \vec{x} \ge  {(2\delta_0)^k\exp(\e^{-1/3}[f(\vec{a})-\tfrac12\eta)])}.
	\end{align*}
	Combining the above two bounds we have
	\begin{align*}
		\int_{-M}^{b_1} \cdots \int_{-M}^{b_k} g_{\e}(\vec{x})\d \vec{x} \le (\tfrac{M}{\delta_0})^k\exp(-\tfrac14\e^{-1/3}\eta),
	\end{align*}
	which goes to zero as $\e\downarrow 0$.  Thus, we conclude the proof of \eqref{limsup}. 
	The proof of \eqref{liminf} follows analogously.
\end{proof}

\appendix

\section{Proof of Lemma \ref{l:mon}} \label{app1}
In this section, we prove Lemma \ref{l:mon}. The idea is to view short-time scaled KPZ equation $\g_t(\cdot)$ defined in \eqref{eq:kpzscal} as the lowest index curve of an appropriate line ensemble and use certain stochastic monotonicity properties of the same. To make our exposition self-contained, below we briefly introduce the line ensemble machinery. 

\smallskip

Fix $t>0$ throughout this section and consider the convex function $$\mathbf{G}_t(x)=(\pi t/4)^{1/2}e^{(\pi t/4)^{1/4} x}.$$ Recall the general notion of line ensembles from Section 2 in \cite{CH14}. {Let $\mathcal{L}=(\mathcal{L}_1,\mathcal{L}_2,\ldots)$ be an $\mathbb{N}\times \mathbb{R}$ indexed line ensemble. Fix $k_1\le k_2$ with $k_1,k_2\in \N$ and an interval $(a,b)\in \R$ and two vectors $\vec{x},\vec{y} \in \R^{k_2-k_1+1}$. {Let $\mathbb{P}^{k_1, k_2, (a,b), \vec{x}, \vec{y}}_{\mathrm{free}}$ denote the law of $k_2-k_1+1$ many independent Brownian bridges taking values $\vec{x}$ at time $a$ and $\vec{y}$ at time $b$. Given two measurable functions $f,g:(a,b)\to \R\cup \{\pm\infty\}$, the law $\mathbb{P}^{k_1, k_2, (a,b), \vec{x}, \vec{y}, f,g}_{\mathbf{G}_t}$ on $\mathcal{L}_{k_1},\ldots,\mathcal{L}_{k_2} : (a,b) \to \R$ has the following Radon-Nikodym derivative w.r.t.~ $\mathbb{P}^{k_1, k_2, (a,b), \vec{x}, \vec{y}}_{\mathrm{free}}$}: 
	\begin{align}\label{am1}
		\frac{d\mathbb{P}^{k_1, k_2,(a,b), \vec{x}, \vec{y}, f, g}_{\mathbf{G}_t}}{d\mathbb{P}^{k_1, k_2, (a,b), \vec{x}, \vec{y}}_{\mathrm{free}}}(\mathcal{L}_{k_1}, \ldots , \mathcal{L}_{k_2}) &= \frac{\exp\bigg\{- \sum_{i=k_1}^{k_2+1} \int \mathbf{G}_t\big(\mathcal{L}_{i}(x)- \mathcal{L}_{i-1}(x)\big) dx \bigg\} }{Z^{k_1, k_2, (a,b), \vec{x}, \vec{y}, f,g}_{\mathbf{G}_t}},
	\end{align}
	where  $\mathcal{L}_{k_1-1}=f$, or    $\infty$ if $k_1=1$; and  $\mathcal{L}_{k_2+1}=g$. Here $Z^{k_1, k_2, (a,b), \vec{x}, \vec{y}, f, g}_{\mathbf{G}_t}$ is the normalizing constant which produces a probability measure.} {We say $\mathcal{L}$ enjoys the \emph{$\mathbf{G}_t$-Brownian Gibbs property} if, for all $K = \{k_1,\ldots, k_2\}\subset \mathbb{N}$ and $(a,b)\subset \R$, the following distributional equality holds:
	\begin{align} \label{a0}
		\mathrm{Law}\Big(\mathcal{L}_{K\times (a,b)} \text{ conditioned on }\mathcal{L}_{\mathbb{N}\times \R \backslash K \times (a,b)}\Big) = \mathbb{P}^{k_1,k_2, (a,b),\vec{x}, \vec{y}, f, g}_{\mathbf{G}_t} \, ,
	\end{align}
	where $\vec{x}= (\mathcal{L}_{k_1}(a), \ldots , \mathcal{L}_{k_2}(a))$, $\vec{y} =(\mathcal{L}_{k_1}(b),\ldots , \mathcal{L}_{k_2}(b))$, and where again $\mathcal{L}_{k_1-1}=f$, or    $\infty$ if $k_1=1$; and  $\mathcal{L}_{k_2+1}=g$. }

Similar to the Markov property, a \emph{strong} version of the $\mathbf{G}_t$-Brownian Gibbs property that is valid with respect to \textit{stopping domains} exists. A pair $(\mathfrak{a},\mathfrak{b})$ of random variables is called a $K$-stopping domain if $\big\{\mathfrak{a} \leq a, \mathfrak{b}\geq b\big\} \in \mathfrak{F}_{\textrm{ext}}\big(K\times (a,b)\big)$, the $\sigma$-field generated by $\mathcal{L}_{(\N\times \R)\setminus (K\times (a,b))}$. $\mathcal{L}$ satisfies the strong $\mathbf{G}_t$-Brownian Gibbs property if for all $K = \{k_1,\ldots, k_2\}\subset \mathbb{N}$ and $K$-stopping domain if $(\mathfrak{a}, \mathfrak{b})$, the conditional distribution of $\mathcal{L}_{K\times (\mathfrak{a}, \mathfrak{b})}$ given $\mathfrak{F}_{\textrm{ext}}\big(K\times (\mathfrak{a},\mathfrak{b})\big)$ is $\mathbb{P}^{k_1,k_2, (l,r),\vec{x}, \vec{y}, f, g}_{\mathbf{G}_t}$, where $\ell = \mathfrak{a}$, $r= \mathfrak{b}$, $\vec{x} =(\mathcal{L}_i(\mathfrak{a}))_{i\in K}$, $\vec{y} =(\mathcal{L}_i(\mathfrak{b}))_{i\in K}$, and where again $\mathcal{L}_{k_1-1}=f$, or    $\infty$ if $k_1=1$; and  $\mathcal{L}_{k_2+1}=g$. 

\smallskip

The following lemma shows how the short-time scaled KPZ process $\g_t(\cdot)$ fits into a line ensemble satisfying the $\mathbf{G}_{t}$-Brownian Gibbs property. 
\begin{lemma}[Lemma 2.5 in \cite{dg} and Lemma 2.5 of \cite{CH16}] \label{line-ensemble}
	For each $t>0$, there exists an $\N\times \R$-indexed line ensemble $\{\g^{(n)}_{t}(x)\}_{n\in \N, x\in \R}$ satisfying the $\mathbf{G}_{t}$-Brownian Gibbs property and the lowest indexed curve $\g^{(1)}_{t}(x)$ is equal in distribution (as a process in $x$) to  $\g_t(x)$ defined in \eqref{eq:kpzscal}. Furthermore, the line ensemble  $\{\g^{(n)}_{t}(x)\}_{n\in \N, x\in \R}$ satisfies the strong $\mathbf{G}_t$-Brownian Gibbs property.
\end{lemma}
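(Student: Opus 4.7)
The plan is to realize $\g_t(\cdot)$ as the top curve $\g_t^{(1)}(\cdot)$ of the $\mathbf{G}_t$-line ensemble furnished by Lemma \ref{line-ensemble}---this identification is legitimate because \eqref{mon} concerns only the distribution of $\g_t$ viewed as a process---and then exploit the $\mathbf{G}_t$-Brownian Gibbs property together with a monotone coupling / FKG argument. Fix $[a,b]\subset \R$ and condition on the enlarged $\sigma$-field $\mathcal{F}_{\mathrm{ext}}$ generated by $\{\g_t^{(n)}(x): n\ge 2,\, x\in \R\}\cup\{\g_t^{(1)}(x): x\notin (a,b)\}$. By the $\mathbf{G}_t$-Brownian Gibbs property \eqref{a0} with $K=\{1\}$, together with \eqref{am1}, the conditional law of $\g_t^{(1)}\bigl|_{[a,b]}$ given $\mathcal{F}_{\mathrm{ext}}$ is the tilt of the free bridge $\Pr_{\mathrm{free}}^{(a,b),(\g_t^{(1)}(a),\g_t^{(1)}(b))}$ by the Radon--Nikodym derivative
\begin{align*}
D(L) \;:=\; \frac{1}{Z}\,\exp\!\Bigl(-\int_a^b \mathbf{G}_t\bigl(\g_t^{(2)}(x)-L(x)\bigr)\,dx\Bigr),
\end{align*}
where $Z$ is the normalizing constant.

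Since $\mathbf{G}_t(y)=\sqrt{\pi t/4}\,\exp\bigl((\pi t/4)^{1/4}y\bigr)$ is strictly increasing in $y$, the functional $L\mapsto D(L)$ is monotone \emph{increasing} in the pointwise partial order on paths with fixed endpoints $\g_t^{(1)}(a),\g_t^{(1)}(b)$: if $L_1\ge L_2$ pointwise on $[a,b]$ then $\mathbf{G}_t(\g_t^{(2)}(x)-L_1(x))\le \mathbf{G}_t(\g_t^{(2)}(x)-L_2(x))$, so $D(L_1)\ge D(L_2)$. Conversely, the defining condition \eqref{tom} says exactly that $\mathbf{1}_A$ is monotone \emph{decreasing} in the same partial order: if $L_1\ge L_2$ (with matching endpoints) and $L_1\in A$, then $L_2\in A$, so $\mathbf{1}_A(L_1)\le \mathbf{1}_A(L_2)$. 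Invoking the FKG (positive association) inequality for the free Brownian bridge with respect to pointwise ordering, applied to the increasing functional $D$ and the decreasing functional $\mathbf{1}_A$, gives
\begin{align*}
\Pr\!\left[\g_t^{(1)}\bigl|_{[a,b]}\in A\,\Big|\,\mathcal{F}_{\mathrm{ext}}\right]
\;=\;\frac{\Ex_{\mathrm{free}}\bigl[D\cdot \mathbf{1}_A\bigr]}{\Ex_{\mathrm{free}}[D]}
\;\le\;\Ex_{\mathrm{free}}[\mathbf{1}_A]
\;=\;\Pr_{\mathrm{free}}^{(a,b),(\g_t^{(1)}(a),\g_t^{(1)}(b))}(A).
\end{align*}
The right-hand side is measurable with respect to the smaller $\sigma$-field generated by $(\g_t^{(1)}(x))_{x\notin (a,b)}$, so projecting via the tower property onto that sub-$\sigma$-field preserves the inequality and produces \eqref{mon}.

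For the stopping-domain extension, I would repeat the argument verbatim using the \emph{strong} $\mathbf{G}_t$-Brownian Gibbs property from Lemma \ref{line-ensemble}: on $\{\mathfrak{a}\le a,\mathfrak{b}\ge b\}\in \mathfrak{F}_{\mathrm{ext}}(\{1\}\times(a,b))$, the conditional law of $\g_t^{(1)}|_{(\mathfrak{a},\mathfrak{b})}$ given the external $\sigma$-field has exactly the same Radon--Nikodym tilt structure on $[\mathfrak{a},\mathfrak{b}]$, and the monotonicity of $D$ versus $\mathbf{1}_A$ goes through on the random interval unchanged. The main technical point---and thus the main obstacle---is justifying the FKG inequality for the continuum free Brownian bridge in the pointwise partial order. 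The cleanest route is to discretize the bridge as a centered Gaussian vector whose inverse covariance is a tridiagonal matrix of discrete-Laplacian type with non-positive off-diagonal entries, apply Pitt's Gaussian FKG theorem in finite dimension, and pass to the continuum limit using the uniform-topology tightness of the discrete bridge; alternatively one can quote the monotone-coupling lemma for $\mathbf{H}$-Brownian Gibbs ensembles established in the KPZ line-ensemble literature, e.g.\ Lemmas~2.6--2.7 of \cite{CH14,CH16}.
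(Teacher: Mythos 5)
There is a genuine gap, and it is one of target rather than technique: the statement you were asked to prove is Lemma \ref{line-ensemble} itself, i.e.\ the \emph{existence} of an $\N\times\R$-indexed line ensemble $\{\g_t^{(n)}\}$ enjoying the $\mathbf{G}_t$-Brownian Gibbs property (and its strong, stopping-domain version) whose lowest indexed curve has the law of $\g_t(\cdot)$ from \eqref{eq:kpzscal}. Your proposal never addresses this. Instead it proves the monotone-event bound \eqref{mon}, which is Lemma \ref{l:mon}, and it does so by explicitly invoking ``the $\mathbf{G}_t$-line ensemble furnished by Lemma \ref{line-ensemble}'' — that is, it assumes precisely the statement under consideration, which is circular relative to the assigned task. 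Nothing in the argument constructs the ensemble, verifies that the Gibbs resampling property \eqref{a0} holds for it, identifies the top curve with the short-time rescaled KPZ process $\g_t$, or establishes the strong Gibbs property with respect to stopping domains. In the paper this lemma is an imported result: the KPZ line ensemble is built in \cite{CH16} (as a limit of the O'Connell--Yor semi-discrete polymer line ensembles, with the Gibbs property surviving the limit and the strong version following by a standard stopping-domain argument), and \cite{dg} records that the short-time rescaling of \eqref{eq:kpzscal} transforms the $\mathbf{H}_t$-Gibbs property into the $\mathbf{G}_t$-Gibbs property stated here. A self-contained proof would have to reproduce, or at least carefully reduce to, those constructions; your write-up contains none of that.

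As a secondary remark: viewed as a blind proof of Lemma \ref{l:mon} (the statement it actually engages with), your FKG route is a plausible alternative to the paper's argument, which instead uses the stochastic monotonicity coupling of Lemma \ref{Coupling1} to lower the second curve to $-\infty$ and thereby reduce the conditional law to the free bridge. Your tilt $D(L)\propto\exp(-\int_a^b\mathbf{G}_t(\g_t^{(2)}(x)-L(x))\,dx)$ is the correct Radon--Nikodym derivative for $k_1=k_2=1$, and its monotonicity versus the decreasing functional $\ind_A$ does yield \eqref{mon} once positive association of the Brownian bridge in the pointwise order is justified (finite-dimensional Gaussian FKG via Pitt, nonnegative bridge covariance, plus a continuum limit), which you correctly flag as the technical point. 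But this does not repair the main defect: the lemma you were asked to prove remains unproved and is in fact used as an input.
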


Before beginning the proof of Lemma \ref{l:mon} we recall one more property of line ensembles, i.e. the stochastic monotonicity, which is indispensable to the study of monotone events in Lemma \ref{l:mon}.

\begin{lemma}[Lemmas 2.6 and 2.7 of \cite{CH16}]\label{Coupling1}
	Fix a finite interval $(a,b)\subset \R$ and $x,y\in \R$. For $i\in \{1,2\}$, fix measurable functions  $g_i: (a,b) \to \R \cup \{-\infty\}$ such that  $g_2(s)\leq  g_1(s)$ for $s\in (a,b)$. For each $v \in \{1,2\}$, let
	$\Pr_v$ denote the law $\mathbb{P}^{1, 1, (a,b), x,y,+\infty,g_v}_{\mathbf{G}_t}$, so that a $\Pr_v$-distributed random variable
	$\mathcal{R}_i= \{\mathcal{R}_v(s)\}_{s\in(a,b)}$ is a random function on $[a,b]$ with endpoints $x$ and $y$. Then a common probability space may be constructed on which the two measures are supported such that, almost surely, $\mathcal{R}_{1}(s)\geq \mathcal{R}_{2}(s)$ for all $s\in (a,b)$.
\end{lemma}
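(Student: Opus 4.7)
I would prove the coupling by discretizing the interval $(a,b)$, reducing to a finite-dimensional stochastic monotonicity statement via Holley's lattice criterion, and then passing to the continuum limit. Fix a partition $a=s_0<s_1<\cdots<s_N=b$ with mesh $\delta_N\downarrow 0$, and let $\Pr_i^{(N)}$ denote the joint law of $(\mathcal{L}_i(s_1),\ldots,\mathcal{L}_i(s_{N-1}))$ under $\Pr_i=\mathbb{P}^{1,1,(a,b),x,y,+\infty,g_i}_{\mathbf{G}_t}$. By \eqref{am1}, $\Pr_i^{(N)}$ has density on $\R^{N-1}$ proportional to $\exp(-\phi(\vec x)-\psi_i^{(N)}(\vec x))$, where $\phi(\vec x)=\sum_{j=0}^{N-1}\tfrac{(x_{j+1}-x_j)^2}{2(s_{j+1}-s_j)}$ (with $x_0=x$, $x_N=y$) is the Brownian-bridge kinetic energy and $\psi_i^{(N)}(\vec x)=\sum_{j=1}^{N-1}\mathbf{G}_t(g_i(s_j)-x_j)(s_{j+1}-s_j)$ is the discretized lower-wall interaction.

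The core step is to verify Holley's lattice inequality $\rho_1^{(N)}(\vec x\vee\vec y)\rho_2^{(N)}(\vec x\wedge\vec y)\ge \rho_1^{(N)}(\vec x)\rho_2^{(N)}(\vec y)$ on $\R^{N-1}$ under the coordinatewise order. The normalizing constants cancel, and the inequality splits into (i) supermodularity of $\phi$, which follows termwise from the elementary identity $(u_1\vee u_2-v_1\vee v_2)^2+(u_1\wedge u_2-v_1\wedge v_2)^2\le (u_1-v_1)^2+(u_2-v_2)^2$ (a short case check), and (ii) the cross-potential bound $\psi_1^{(N)}(\vec x\vee\vec y)+\psi_2^{(N)}(\vec x\wedge\vec y)\le \psi_1^{(N)}(\vec x)+\psi_2^{(N)}(\vec y)$. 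Because $\psi_i^{(N)}$ decouples across coordinates, (ii) reduces to the scalar inequality $f_1(u\vee v)+f_2(u\wedge v)\le f_1(u)+f_2(v)$ for $f_i(w):=\mathbf{G}_t(g_i(s_j)-w)$. Writing $\mathbf{G}_t(z)=(\pi t/4)^{1/2}e^{cz}$ with $c=(\pi t/4)^{1/4}>0$, the excess $(f_1(u)+f_2(v))-(f_1(u\vee v)+f_2(u\wedge v))$ factors (for $u<v$) as $(\pi t/4)^{1/2}(e^{-cu}-e^{-cv})(e^{cg_1(s_j)}-e^{cg_2(s_j)})$, and is non-negative precisely because $g_1\ge g_2$. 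Holley's theorem then produces a coupling of $\Pr_1^{(N)}$ and $\Pr_2^{(N)}$ under which $\mathcal{L}_1^{(N)}(s_j)\ge \mathcal{L}_2^{(N)}(s_j)$ for every $j$, almost surely.

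To finish, I would extend the coupled vectors to continuous paths by interpolating on each subinterval $[s_j,s_{j+1}]$ with a common-noise Brownian bridge shifted linearly so the two interpolants remain ordered (the standard monotone coupling of bridges with ordered endpoints). This yields a coupled pair $(\widetilde{\mathcal{L}}_1^{(N)},\widetilde{\mathcal{L}}_2^{(N)})\in C([a,b])^2$ with $\widetilde{\mathcal{L}}_1^{(N)}\ge \widetilde{\mathcal{L}}_2^{(N)}$ pointwise. Each marginal is absolutely continuous with respect to the Brownian bridge of matched endpoints with weight in $(0,1]$, so tightness on $C([a,b])$ is inherited from the bridge; the Riemann-sum convergence $\psi_i^{(N)}\to \int \mathbf{G}_t(g_i-\mathcal L)\,ds$ together with bounded convergence of the densities identifies the marginals of any weak subsequential limit as $\Pr_1$ and $\Pr_2$, producing the desired coupling. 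The main obstacle is inequality (ii): it is the unique place where both the ordering $g_1\ge g_2$ and the monotonicity/convexity of $\mathbf{G}_t$ enter, and it cleanly isolates the structural content of the lemma. A secondary subtlety, handled by a monotone-approximation argument ($g_i\mapsto \max(g_i,-K)$ and $K\to\infty$, using the ordering established in the truncated setting), is that $g_i$ may be only measurable and take the value $-\infty$.
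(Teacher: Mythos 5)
First, note that the paper does not prove this lemma at all: it is imported verbatim from Corwin--Hammond (Lemmas 2.6 and 2.7 of \cite{CH16}, with the proofs of the analogous statements carried out in their earlier work by a discretization and an explicit monotone coupling of heat-bath Markov chain dynamics whose invariant measures are the discretized Gibbs laws). Your route is close in spirit to that argument --- discretize, prove a finite-dimensional stochastic domination, pass to the continuum --- but you replace the Markov-chain coupling by Holley's lattice criterion. The two key computations you isolate are correct: the bridge energy $\phi$ satisfies $\phi(\vec x\vee\vec y)+\phi(\vec x\wedge\vec y)\le\phi(\vec x)+\phi(\vec y)$ termwise (supermodularity of $-(u-v)^2$), and the scalar cross inequality for $f_i(w)=\mathbf{G}_t(g_i(s_j)-w)$ reduces, via the exponential form of $\mathbf{G}_t$, exactly to $g_1\ge g_2$; this is indeed the only place the ordering of the walls enters, so the structural content is correctly identified. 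Two small points you should still acknowledge: Holley's theorem must be invoked in a version valid for densities on $\R^{N-1}$ rather than a finite distributive lattice (Preston's extension, or a further discretization of the spin values), and the uniform positivity of the normalizing constants $Z_i^{(N)}$ needs $g_i$ bounded above on compacts (true in the application, where $g$ is the continuous second curve of the line ensemble).

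The genuine gap is in how you set up and close the discretization. As written, the claim that the exact finite-dimensional marginal $\Pr_i^{(N)}$ of $\mathbb{P}^{1,1,(a,b),x,y,+\infty,g_i}_{\mathbf{G}_t}$ has density proportional to $\exp(-\phi-\psi_i^{(N)})$ is false: by \eqref{am1} the true marginal density at the grid points is the Gaussian factor times the \emph{bridge average} $\prod_j \Ex[\exp(-\int_{s_j}^{s_{j+1}}\mathbf{G}_t(g_i(s)-B(s))\,ds)\mid B(s_j)=x_j,\,B(s_{j+1})=x_{j+1}]$, not a Riemann-sum weight evaluated at the grid values; verifying Holley directly for those averaged weights is essentially the lemma itself on each subinterval, so you cannot shortcut it. The consistent version of your plan is to \emph{define} $\Pr_i^{(N)}$ by the Riemann-sum density as an approximating measure (which is what your last paragraph implicitly does), and then the burden shifts entirely to the continuum limit: you must show that the interpolated coupled measures converge weakly and that the limits are the continuum Gibbs laws $\Pr_1,\Pr_2$. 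That identification is the technically substantive step (it is where Corwin--Hammond spend their effort), and your sketch of it is too thin: ``bounded convergence of the densities'' requires the Riemann sums $\psi_i^{(N)}$ evaluated along the bridge to converge a.s.\ to $\int_a^b\mathbf{G}_t(g_i(s)-\mathcal{L}(s))\,ds$, which holds for continuous (or a.e.\ continuous) $g_i$ but not for general measurable $g_i$ sampled at deterministic grid points, and it also needs a uniform-in-$N$ lower bound on the normalizers to justify passing the limit through the ratio. Your truncation remark handles the value $-\infty$ but not this measurability issue; for the purposes of this paper one may simply restrict to continuous $g$ (which is how the lemma is used), but the proof as proposed does not yet deliver the statement in the generality in which it is quoted.
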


\begin{proof}[Proof of Lemma \ref{l:mon}] Fix an interval $[a,b]$ and a corresponding monotone set $A\in \mathcal{B}(C([a,b]))$. By Lemma \ref{line-ensemble} and tower property of expectation we may write
	\begin{align}
		\notag \Pr\left[\g_t(\cdot)\mid_{[a,b]} \ \in A \mid (\g_t(x))_{x\not\in (a,b)}\right] & = \Ex^{(\ge 2)}\left[\Pr\left[\g_t^{(1)}(\cdot)\mid_{[a,b]} \ \in A \mid (\g_t^{(n)}(\cdot))_{n\ge 2}, (\g_t^{(1)}(x))_{x\not\in (a,b)}\right]\right] \\ & =  \Ex^{(\ge 2)}\left[\mathbb{P}^{1,1, (a,b),\g_t^{(1)}(a),\g_t^{(1)}(b),+\infty, \g_t^{(2)}(\cdot)}_{\mathbf{G}_t}\left(\g_t^{(1)}(\cdot)\mid_{[a,b]} \ \in A\right)\right]\label{a1}
	\end{align}
	where the last equality follows from \eqref{a0}. Here $\Ex^{(\ge 2)}$ denotes the expectation operator taken over all lower curves $\{\g_t^{(n)}(\cdot)\}_{n\ge 2}$. Now by Lemma \ref{Coupling1}, decreasing $\g_t^{(2)}(\cdot)$ pointwise on $[a,b]$ reduces the value of $\g_t^{(1)}(\cdot)$ pointwise stochastically. But by the definition of monotone set $A$ (see \eqref{tom}), we know decreasing $\g_t^{(1)}(\cdot)\mid_{[a,b]}$ stochastically pointwise and keeping the endpoint fixed, only increases the conditional probability appearing above. Thus, we may drop $\g_t^{(2)}(\cdot)$ all the way to $-\infty$, to obtain 
	\begin{align}\label{a2}
		\mbox{r.h.s.~of \eqref{a1}} \le \Ex^{(\ge 2)}\left[\mathbb{P}^{1,1, (a,b),\g_t^{(1)}(a),\g_t^{(1)}(b),+\infty, -\infty}_{\mathbf{G}_t}\left(\g_t^{(1)}(\cdot)\mid_{[a,b]} \ \in A\right)\right].
	\end{align}
	Under the above situation the Radon-Nikodym derivative appearing in \eqref{am1} becomes constant, and thus
	\begin{align*}
		\mathbb{P}^{1,1, (a,b),\g_t^{(1)}(a),\g_t^{(1)}(b),+\infty, -\infty}_{\mathbf{G}_t}\left[\cdot\right]=\mathbb{P}_{\operatorname{free}}^{1,1, (a,b),\g_t^{(1)}(a),\g_t^{(1)}(b)}\left[\cdot\right].
	\end{align*}
	The measure on the right side above is a single Brownian bridge measure on $[a,b]$ starting at $\g_t^{(1)}(a)$ and ending at $\g_t^{(1)}(b)$ and hence free of $\{\g_t^{(n)}(\cdot)\}_{n\ge 2}$. Thus r.h.s.~of \eqref{a2} can be viewed as $\Pr_{\operatorname{free}}^{(a,b),(\g_t^{(1)}(a),\g_t^{(1)}(b))}(A)$.  This establishes \eqref{mon}. The case when $[a,b]$ is a stopping domain follows from the same calculation and the fact that $\{\g_t^{(n)}(\cdot)\}_{n\ge 1}$ satisfies the strong $\mathbf{G}_t$-Brownian Gibbs property via Lemma \ref{line-ensemble}. 
\end{proof}

\bibliographystyle{alphaabbr}		
\bibliography{paths}
\end{document}